\numberwithin{equation}{section}
\newtheorem{theorem}{Theorem}[section]
\newtheorem{lemma}[theorem]{Lemma}
\newtheorem{proposition}[theorem]{Proposition}
\newtheorem{definition}[theorem]{Definition}
\newtheorem{corollary}[theorem]{Corollary}
\theoremstyle{remark}
\newtheorem{remark}[theorem]{Remark}
\newcommand{\adn}{\widetilde{d}}
\newcommand{\vanish}[1]{}
\newcommand{\Sym}[1]{\operatorname{Sym}\left( #1\right)}
\begin{document}

\title{Counting genus one partitions and permutations}

\author[Robert Cori and G\'abor Hetyei]{Robert Cori \and G\'abor Hetyei}

\address{Labri, Universit\'e Bordeaux 1, 33405 Talence Cedex, France.
\hfill\break
WWW: \tt http://www.labri.fr/perso/cori/.}

\address{Department of Mathematics and Statistics,
  UNC-Charlotte, Charlotte NC 28223-0001.
WWW: \tt http://www.math.uncc.edu/\~{}ghetyei/.}

\date{\today}
\subjclass [2000]{Primary 05C30; Secondary 05C10, 05C15}

\keywords{set partitions, noncrossing partitions, genus of a hypermap}

\begin{abstract}
We prove the conjecture by M.\ Yip  stating that counting genus one
partitions by the number of their elements and parts yields, up to a
shift of indices, the same array of numbers as counting genus one rooted
hypermonopoles. Our proof involves representing each genus one
permutation by a four-colored noncrossing partition. This representation
may be selected in a unique way for permutations containing no trivial
cycles. The conclusion follows from a general generating function
formula that holds for any class of permutations that is closed under the
removal and reinsertion of trivial cycles. Our method also provides a
new way to count rooted hypermonopoles of genus one, and puts the
spotlight on a class of genus one permutations that is invariant under
an obvious extension of the Kreweras duality map to genus one
permutations.  
\end{abstract}

\maketitle

\section*{Introduction}

Noncrossing partitions, first defined in G.\ Kreweras' seminal
paper~\cite{Kreweras}, have a vast literature in areas ranging from
probability theory through polyhedral geometry to the study of Coxeter
groups. Noncrossing partitions on a given number of
elements are counted by the Catalan numbers, if we also fix the number
of parts, the answer to the resulting counting problem is given by the
Narayana numbers.  
 
A natural generalization of the problem of counting noncrossing
partitions is to count partitions of a given genus. The genus of a
partition may be defined in terms of a topological
representation (see~\cite{CautisJackson} or \cite{Yip} for example), but
there exists also a purely 
combinatorial definition of the genus of a hypermap (thought of as a pair
of permutations, generating a transitive permutation group) that can be
specialized first to hypermonopoles, or permutations (that is, hypermaps
whose first component is the circular permutation $(1,2,\ldots,n)$) and
then to partitions (that, is permutations whose cycles may be written as
lists whose elements are in increasing order). Counting partitions of
a given genus seems surprisingly hard, especially considering the fact
that, for the closely related hypermonopoles, a general machinery was built by
S.\ Cautis and D. \ M. \ Jackson~\cite{CautisJackson} and explicit
formulas were given by A.\ Goupil and G.\ 
Schaeffer~\cite{Goupil-Schaeffer}. It should be noted that 
for genus zero, i.e., noncrossing partitions, the notions of a hypermonopole (in
our language: permutation) and of a partition coincide
(see~\cite[Theorem 1]{Cori}). Thus it seems hard to believe that the two
notions would not only diverge but also give rise to counting problems
of different difficulty in higher genus. Asymptotic estimates for 
the numbers of noncrossing partitions on various surfaces may found
in~\cite{Rueetal}. 

Concerning partitions of a fixed genus, a great deal of numerical evidence
was collected in M.\ Yip's Master's thesis, who made the
following conjecture~\cite[Conjecture 3.15]{Yip}: the number of
genus $1$ partitions on $n$ elements and $k$ parts is the same as the
number of genus one permutations of $n-1$ elements having $k-1$ cycles.
In this paper we prove this conjecture and provide further insight into
the structure of genus $1$ partitions and permutations by representing
them as four-colored noncrossing partitions.

Our paper is structured as follows. After reviewing some basic
terminology and results on the genus of partitions and permutations in
Section~\ref{sec:genus}, in Section~\ref{sec:4cp} we develop a theory of
representing every permutation of genus $1$ by a four-colored
noncrossing partition. The four colors form consecutive arcs in the
circular order and prescribe a relabeling that results in a permutation
of genus at most one. The construction is not unique, but we show that every
permutation of genus $1$ may be represented in such a way. Moreover, as
we show it in Section~\ref{sec:redpp}, if the permutation of genus $1$ is {\em
  reduced} in the sense that it contains no cycle consisting of 
consecutive elements in the circular order (we call these {\em trivial
  cycles}) then we may select a unique four-colored noncrossing
partition representation of our permutation which we call the canonical
representation. This unicity enables us to count reduced permutations
and partitions of genus $1$ in Section~\ref{sec:enured}. We only need to
account for the possibility of having trivial cycles. In
Section~\ref{sec:red} we show how to do 
this, at the level of ordinary generating functions, for any class of
permutations that is closed under the removal and reinsertion of trivial
cycles. Since genus one permutations and partitions form such classes,
we may combine the formula stated in Theorem~\ref{thm:pd} with the
generating function formulas stated in Section~\ref{sec:enured} and
obtain the generating function formulas counting genus $1$ permutations
and partitions with given number of permuted elements and cycles. Since
the resulting formulas stated in Theorems~\ref{thm:pnkgf} and
\ref{thm:pnkgfp} differ only by a factor of $xy$, the validity of M.\
Yip's conjecture is at this point verified. In
Section~\ref{sec:yip} we show how to extract the coefficients from our
generating functions to find the number of partitions of genus $1$. It should
be noted, that our paper thus also provides a new method to count
permutations of genus $1$, whose number was first found by A.\ Goupil
and G.\ Schaeffer~\cite{Goupil-Schaeffer}. The generalized formula stated in
Section~\ref{sec:yip} links the problem of counting genus $1$ permutations
and partitions to the problem of counting type $B$ noncrossing 
partitions, convex polyominos and Jacobi configurations (at least
numerically). The explanation of these connections, 
together with ideas of possible simplifications and further
questions, are collected in the concluding Section~\ref{sec:concl}. 

\section{On the genus of permutations and partitions}
\label{sec:genus}

\subsection{Hypermaps and permutations}
Since the sixties combinatorialists considered permutations as a useful
tool for representing graphs embedded in a topological surface. One of
the main objects in this representation is the notion of a hypermap. 

A hypermap is a pair of permutations $(\sigma, \alpha)$ on a set of
points  $\{1,2, \ldots, n\}$, such that the group they generate is
transitive, meaning that the graph with vertex set $\{1,2, \ldots, n\}$
and edge set $\{i, \alpha(i)\}, \{i, \sigma(i)\} $ is connected.

 It was proved (in \cite{Jacques}) that the number  $g(\sigma,\alpha)$ 
associated to  a hypermap and  defined by:
\begin{equation}
\label{eq:genusdef}
n + 2 -2g(\sigma,\alpha) = z(\sigma) + z(\alpha) + z(\alpha^{-1}
\sigma),
\end{equation}
where $z(\alpha)$ denotes the number of cycles of the permutation $\alpha$,
is a non-negative integer.  It is called the genus of the hypermap.

Taking for $\sigma$ the circular permutation $\zeta_n$ such that for all $i$, 
$\zeta_n(i) = i+1$ (where $n+1$ means $1$) allows to define the genus of
a  permutation  $\alpha\in\Sym{n}$ as that of the hypermap $(\zeta_n, \alpha)$.
Notice that the pair $(\zeta_n, \alpha)$ generates a transitive group
for any $\alpha$ since $z(\zeta_n) = 1$; so that  we may use the
following definition: 

\begin{definition}
\label{def:permgenus}
The genus of a permutation $\alpha$ is the  non-negative integer
$g(\alpha)$ given by:
 $$ n + 1 -2g(\alpha) = z(\alpha) + z(\alpha^{-1} \zeta_n).$$
\end{definition}

 Notice that hypermaps of the form $(\zeta_n, \alpha)$ are often called
hypermonopoles (for instance in \cite{CautisJackson} or \cite{Yip}). 
A different definition of the genus was given in~\cite{Postnikov},
where the genus $h(\alpha)$ of the permutation $\alpha$ is defined as
the genus of the hypermap
$(\zeta_n, \alpha^{-1} \zeta_n \alpha)$. In this definition a permutation 
is of genus $0$ if and only if it is a power of $\zeta_n$; in  ours permutations
of genus $0$ correspond to noncrossing partitions, a central
object in combinatorics.

\subsection{Partitions of the set $\{1,2, \ldots, n\}$}
To  a partition $P=(P_i)_{i=1,k}$ of the set $\{1,2,\ldots n\}$ is associated
the permutation $\alpha_P$ which has $k$ cycles, each one  corresponding
to one of the $P_i$ written with the elements in increasing order. This allows 
to define the genus of the partition $P$ as that of the permutation
$\alpha_P$. 

It was shown in  \cite[Theorem 1]{Cori} that a permutation
$\alpha$ is  
 of genus $0$, if and only if there exists a noncrossing partition
$P$ such that $\alpha = \alpha_P$.

A noncrossing partition may be drawn as a circle on which we put 
the  points $1,2, \ldots , n$ in clockwise order and parts of size $p> 2$
are represented with $p$-gons inscribed in the circle, parts of size $2$ 
by segments, and parts of size $1$ by isolated points.

The partition $P= (\{1,5, 7 ,8\} , \{2,4\}, \{3\}, \{6\})$ 
is represented in Figure~\ref{fig:ncpart} below.

\begin{figure}[h]
\begin{center}
\includegraphics{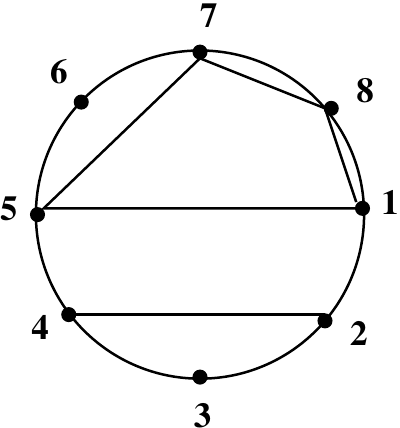}
\end{center}
\caption{The noncrossing partition $P$}
\label{fig:ncpart}
\end{figure}

\subsection{The genus and the cycle structure}
Since the genus of a permutation $\alpha$ is a function of $z(\alpha)$,
the number of its cycles, in the sequel we will consider permutations as
products of their cycles, study their structure, and the effect of minor
changes on the cycle structure. In particular, we will be interested in
the change of the genus when we compose a permutation with a single {\em
  transposition}. A transposition $\tau\in \Sym{n}$, 
exchanging the two points $i, j$,  will be denoted by $\tau = (i, j)$. 
It has $n-2$ cycles of length $1$ and one of length $2$, hence $z(\tau) = n-1$.
Note that we compose permutations right to left, i.e., 
we define the product $\alpha \beta$ of two permutations as the
permutation which sends $i$ into $\alpha(\beta(i))$.

We will often use the following Lemma:

\begin{lemma}
\label{lemma:transpositions}
The number of cycles of the products $\tau\alpha$ and $\alpha\tau$ 
of a permutation $\alpha$ and a transposition
$\tau = (i, j)$ differs from the number of cycles of $\alpha$ by $1$. 
The sign of the change depends on whether $i$ and $j$ belong to the
same cycle of $\alpha$ or not. We have 
$$
z(\tau\alpha)=z(\alpha\tau)=
\left\{
\begin{array}{ll}
z(\alpha)+1 & \mbox{if $i$ and $j$ belong to the same cycle of
  $\alpha$;}\\
z(\alpha)-1 & \mbox{if $i$ and $j$ belong to different cycles of $\alpha$.}\\
\end{array}
\right.
$$ 
\end{lemma}

\begin{definition}
Two cycles in a permutation $\alpha$ are {\em crossing} if there exists
two elements $a, a'$ in one of them  and $b,b'$ in the other such that
$a< b < a' < b'$. 
\end{definition}

Observe  that if such elements exist they may be taken 
such that $a' = \alpha(a)$ and   $b' = \alpha(b)$.

An element $i$ of ${1,2,\ldots ,n}$ is a {\em  back point} of the permutation
$\alpha$ if $\alpha(i) < i$ and $\alpha(i)$ is not the smallest element in
its cycle (i. e. there exist $k>1$ such that $\alpha^k(i) <\alpha (i)$).

\begin{definition}

 A {\em twisted cycle} in a permutation $\alpha$ is a cycle $(b_1, b_2,
 \ldots,  b_p)$ containing a back point. 
 \end{definition}

  The genus of a permutation may  be determined by counting  back points
  as the following variant of \cite[Lemma 5]{Chapuy} shows. 
 
 \begin{lemma}
\label{lemma:nbBackPoints}
For any  permutation $\alpha\in \Sym{n}$, 
the sum of the number of back points of the permutation $\alpha$ and the
number of those of $\alpha^{-1} \zeta_n$ is equal
to $2g(\alpha)$. 
\end{lemma}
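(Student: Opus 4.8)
The plan is to reduce the lemma to an elementary counting identity about weak anti-excedances. Write $\gamma:=\alpha^{-1}\zeta_n$, so that by Definition~\ref{def:permgenus} we have $2g(\alpha)=n+1-z(\alpha)-z(\gamma)$, and, writing $b(\pi)$ for the number of back points of a permutation $\pi$, the claim becomes $b(\alpha)+b(\gamma)=n+1-z(\alpha)-z(\gamma)$. The key preliminary step is to show that for \emph{every} $\pi\in\Sym{n}$,
\[
b(\pi)+z(\pi)=\bigl|\{\,i:\pi(i)\le i\,\}\bigr|.
\]
Indeed, fix a cycle $C$ of $\pi$ with smallest element $m$. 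If $|C|\ge 2$, then exactly one element $i\in C$ with $\pi(i)<i$ fails to be a back point, namely $i=\pi^{-1}(m)$ (here $\pi(i)=m$ is the cycle minimum and $i>m$ since $m=\min C$); if $|C|=1$, then $C$ contains no $i$ with $\pi(i)<i$. So the number of back points inside $C$ equals $|\{\,i\in C:\pi(i)<i\,\}|$ minus $1$ if $|C|\ge 2$ and minus $0$ if $|C|=1$. Summing over all cycles, $b(\pi)$ equals $|\{\,i:\pi(i)<i\,\}|$ minus $z(\pi)-f(\pi)$, the number of non-singleton cycles, where $f(\pi)$ denotes the number of fixed points; adding $z(\pi)$ and using $|\{\,i:\pi(i)\le i\,\}|=|\{\,i:\pi(i)<i\,\}|+f(\pi)$ yields the displayed identity.

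Applying this identity to both $\alpha$ and $\gamma$, the lemma becomes equivalent to
\[
\bigl|\{\,i:\alpha(i)\le i\,\}\bigr|+\bigl|\{\,i:\gamma(i)\le i\,\}\bigr|=n+1 .
\]
To prove it I would count the weak anti-excedances of $\gamma$ directly. The index $i=n$ always contributes, since $\gamma(n)=\alpha^{-1}(1)\le n$. For $1\le i\le n-1$ we have $\gamma(i)=\alpha^{-1}(i+1)$; setting $j:=\alpha^{-1}(i+1)$, so that $\alpha(j)=i+1\ge 2$, the inequality $\gamma(i)\le i$ is equivalent to $j\le\alpha(j)-1$, i.e.\ to $\alpha(j)>j$. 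As $i$ runs over $\{1,\dots,n-1\}$ the value $j$ runs over all of $\{1,\dots,n\}$ except $\alpha^{-1}(1)$, and $\alpha^{-1}(1)$ is never a strict excedance of $\alpha$ (that would force $1>\alpha^{-1}(1)$). Hence $|\{\,i:\gamma(i)\le i\,\}|=1+|\{\,j:\alpha(j)>j\,\}|$, while $|\{\,i:\alpha(i)\le i\,\}|=n-|\{\,j:\alpha(j)>j\,\}|$, so the two quantities sum to $n+1$, as needed.

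Most of this is routine cycle-by-cycle bookkeeping; the only slightly non-obvious ingredient is the identity $b(\pi)+z(\pi)=|\{\,i:\pi(i)\le i\,\}|$, whose proof hinges on correctly isolating the unique non-back-point $\pi^{-1}(m)$ in each non-singleton cycle. The other point demanding care is the treatment of the ``wrap-around'' index $i=n$ in $\gamma=\alpha^{-1}\zeta_n$ together with the special role of $\alpha^{-1}(1)$: this is exactly what produces the additive constant $1$, mirroring the $+1$ in Definition~\ref{def:permgenus} and the $+2$ in~\eqref{eq:genusdef}. An alternative approach, closer to~\cite[Lemma~5]{Chapuy}, would be to induct on the number of non-fixed points of $\alpha$, composing with a well-chosen transposition and tracking the effect on $b(\alpha)$, $b(\gamma)$, $z(\alpha)$ and $z(\gamma)$ via Lemma~\ref{lemma:transpositions}; this works but requires a fairly delicate case analysis of how a transposition changes the set of back points, which I find less transparent than the direct count above.
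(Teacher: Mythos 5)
Your proof is correct and follows essentially the same route as the paper: both reduce the claim, via the relation (number of back points) $=n-|\mathrm{EXC}(\pi)|-z(\pi)$, to the identity $|\mathrm{EXC}(\alpha)|+|\mathrm{EXC}(\alpha^{-1}\zeta_n)|=n-1$, proved by the same correspondence $j\mapsto \alpha(j)-1$ with the same boundary cases $\alpha^{-1}(1)$ and $n$. The only (welcome) difference is that you spell out the cycle-by-cycle proof of the back-point count, which the paper merely asserts.
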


\begin{proof}
As usual, for a permutation $\alpha\in \Sym{n}$, let $\mbox{EXC}(\alpha)$
denote the set of {\em excedances} of $\alpha$, i.e., the set of
elements $i$ such that $\alpha(i)>i$. The number of back points of
$\alpha$ is then $n-|\mbox{EXC}(\alpha)|-z(\alpha)$. After replacing
$2g(\alpha)$ with its expression in Definition~\ref{def:permgenus},
our lemma is equivalent to 
$$
|\mbox{EXC}(\alpha)|+|\mbox{EXC}(\alpha^{-1} \zeta_n)|=n-1.
$$  
To prove this equation observe first that, for all $i$ satisfying $i\neq
\alpha^{-1}(1)$, the relation  
$i\in \mbox{EXC}(\alpha)$ is equivalent to $\alpha(i)-1\not
\in \mbox{EXC}(\alpha^{-1} \zeta_n)$. 
Thus the number of excedances
of $\alpha$ in the set $\{\alpha^{-1}(2),\ldots,\alpha^{-1}(n)\}$   
plus the number of excedances
of $\alpha^{-1} \zeta_n$ in the set $\{1,\ldots,n-1\}$ is $n-1$. Finally
$\alpha^{-1}(1)$ is not an excedance of $\alpha$ and $n$ is not an
excedance of any permutation in $\Sym{n}$.
\end{proof}

 Notice that a permutation
is  associated to a partition if and only if it contains no twisted cycle,
moreover the partition and the associated permutation are of genus $0$
if and only if  
there are no crossing cycles. Noncrossing partitions were extensively
studied (see for instance \cite{Simion-noncrossing}).

\section{Genus one  permutations and four-colored noncrossing 
partitions}
\label{sec:4cp}

We define a four-coloring of a noncrossing partition of the set
$\{1,2,\ldots,n\}$ as a partitioning 
of the $n$ points on the circle into four arcs denoted $A$, $B$, $C$,
$D$ in clockwise order where $A$ is the arc containing the point $1$ and in
which $C$ is only arc allowed to contain no point. We will denote by
$\gamma = (A, B, C, D)$ such 
a $4$-coloring. Equivalently a four-coloring may be defined by $4$
integers defining the numberings  of the points  in the four arcs. These are 
$1 \leq i < j \leq k < \ell \leq n$, giving
\begin{equation}
\label{eq:ABCD}
\begin{array}{ll}
A = \{\ell+1,  \ldots, n, 1, \ldots, i\}, &
B = \{i+1,  \ldots,   j\},\\  
C = \{j+1,  \ldots,   k\}, &
D = \{k+1,  \ldots,   \ell\},\\
\end{array}
\end{equation}  
where
$C$ is empty when $j=k$. In this notation, $A = \{1, \ldots, i\}$ holds
when $\ell = n$.

\begin{definition}
\label{def:coloringp}
We call the sequence $(i,j,k,\ell)$, marking the right endpoints of the
color sets in (\ref{eq:ABCD}), a {\em sequence of coloring points} of
the partition $P$. 
\end{definition}

\medskip

To any four-colored noncrossing partition $(P, \gamma)$ (where $\gamma=
(A, B, C, D)$ ) we associate a permutation $\alpha = \Phi(P, \gamma)$ in
which cycles are obtained from the parts of $P$ by renumbering
the points in the following way: 

\smallskip

We leave the numbering of the points in $A$  unchanged and we continue
labeling in such a way that the elements of $A$ are followed by the
points in $D$, then by the points in $C$, and finally by the points in
$B$. Within each color set, points are numbered in clockwise order.
Thus the elements of $A$ are numbered with $\ell+1, \ell+2, \ldots , n, 1, 2,
\ldots i$, the elements of $D$ are numbered from $i+1$ to $i+\ell-k$,
the elements of $C$ are numbered from $i+\ell-k +1$ to $i+\ell-j$ an the
elements of $B$ are numbered from $i+\ell - j+1$ to $\ell$.  
After introducing 
\begin{equation}
\label{eq:abcd}
a = i,\, b = i + \ell -k,\, c = i+\ell - j,\quad\mbox{and}\quad d=\ell, 
\end{equation}
we obtain that the color sets, in terms of the relabeled elements, are
given by 
\begin{equation}
\label{eq:newABCD} 
\begin{array}{rcl}
A & = &
\left\{
\begin{array}{ll}
\{1, 2, \ldots a, d+1, \ldots, n\} & \mbox{if $d \neq n$,}\\
\{1, 2, \ldots, a\}& \mbox{otherwise;}\\
\end{array}
\right.
\\
B &=& \{c+1, c+2, \ldots , d\}; \quad 
D = \{a+1, a+2, \ldots , b\};\\
C &=& 
\left\{
\begin{array}{ll}
\{b+1, b+2, \ldots , c\} & \mbox{if $c \neq b$,}\\
\emptyset & \mbox{otherwise.}\\
\end{array}
\right.
\end{array}
\end{equation}
Let us also note for future reference that the linear map taking
$(i,j,k,\ell)$ into $(a,b,c,d)$ is its own inverse, i.e., we have 
\begin{equation}
\label{eq:ijkl}
i=a,\, j=a+d-c,\, k=a+d-b \quad \mbox{and}\quad \ell=d.
\end{equation}

Once the points are renumbered, each cycle of $\alpha $ is  obtained
from a  part $P_q = \{x_1, x_2, \ldots x_p\}$    of $P$ by writing the
numbering  of the corresponding points $x_1, x_2, \ldots x_p$, where the
$x_i$'s are in clockwise order. 

\begin{figure}[h]
\label{fig:renumber}
\begin{center}
\includegraphics{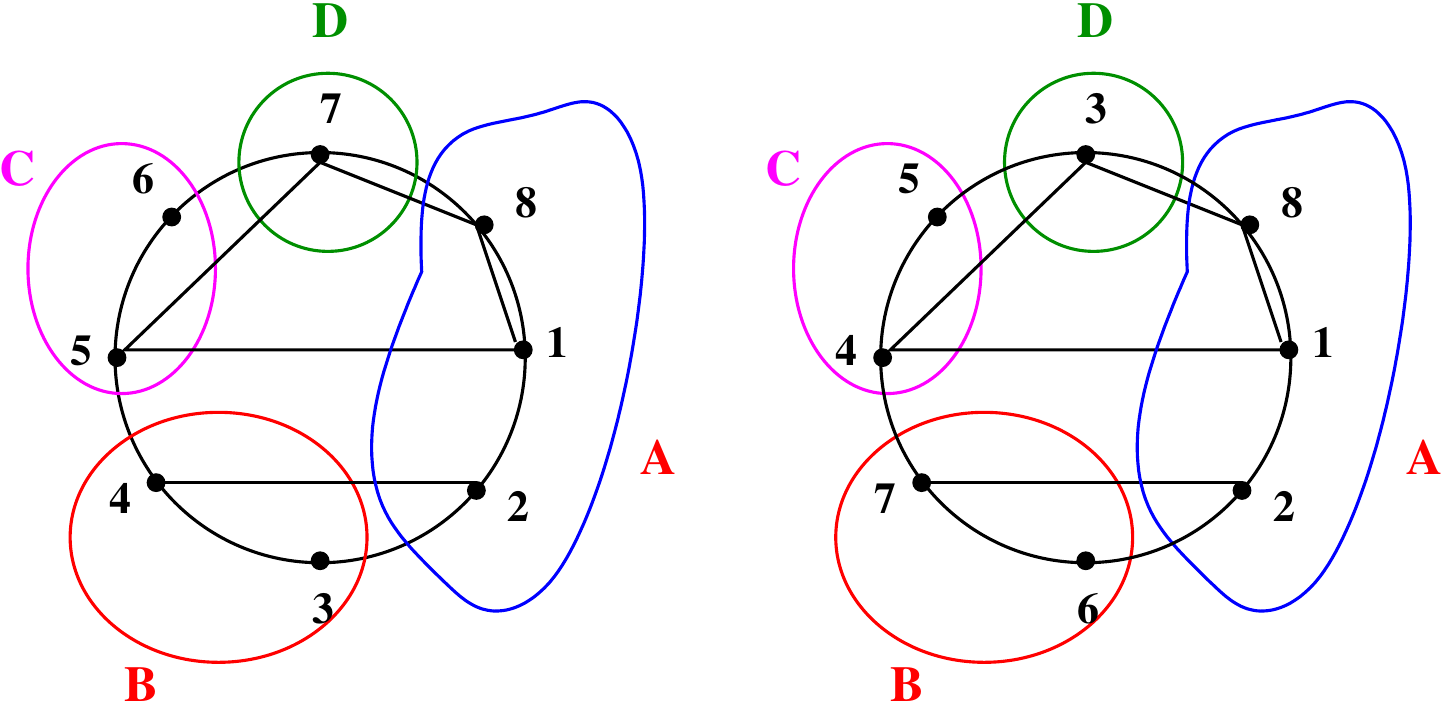}
\end{center}
\caption{A four-coloring of $P$ and the induced renumbering of points}
\label{fig:renumbering}
\end{figure}

 For the example shown in Figure~\ref{fig:renumbering} we obtain the
following permutation of genus $1$: 
$$ \alpha \ = \ \Phi(P, \gamma)  = (1,4,3,8) (2,7) (5) (6)$$

In the sequel  it will be convenient to say that a point $p$  has color
$X$ for $X = A, B, C, D$ if $p \in X$, a part $P_q$ will be unicolored,
bicolored, three-colored or four-colored depending on the number of
different colors its points have. 
\begin{remark}
\label{rem:colors}
A unicolored part of a noncrossing partition $P$ gives rise to  a cycle
in $\Phi(P,\gamma)$ which does not cross any other cycle and is not twisted.
A bicolored  part with points in two different colors $X$ and $Y$ is not
twisted but it crosses any cycle coming from a part that has points of
color $X$ as well as at least one point whose color is neither $X$ nor
$Y$. A bicolored part with points of color $X$ and $Y$ does not cross a
any part that is contained in or disjoint from
$X\cup Y$. A three or four-colored part gives rise to a
twisted cycle. 
\end{remark}

The main point in this section is the following characterization:

\begin{theorem}
\label{thm:4cg1}
If $(P, \gamma)$ is a four-colored noncrossing partition then 
$\Phi(P, \gamma)$ is a permutation of genus $0$ or $1$. It is of genus $1$ if
and only if at least one of these two conditions is satisfied: 

\begin{enumerate}
\item There exists a part $P_q$ which is three or four-colored.

\item There exists two  parts $P_q, P_r$ which are two colored and share
  a common color, more precisely there are 
three different colors $X, Y, Z$ such that
$$ P_q \cap X   \neq \emptyset, \ \   P_q \cap Y \neq \emptyset,\ \ 
P_q\subseteq X\cup Y\quad {\rm and} \quad  P_r \cap X   \neq \emptyset,
\ \ P_r\cap Z \neq \emptyset, \ \  P_r\subseteq X\cup Z.$$
\end{enumerate} 
\end{theorem}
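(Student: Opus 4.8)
The plan is to track the genus of $\alpha=\Phi(P,\gamma)$ via Definition~\ref{def:permgenus}, using the fact that the genus of a noncrossing partition $P$ (viewed as a permutation $\alpha_P$ under the \emph{original} numbering) is $0$, and then to understand how the relabeling encoded by $\gamma$ perturbs the count $z(\alpha)+z(\alpha^{-1}\zeta_n)$. Concretely, $\alpha$ and $\alpha_P$ have the same cycle structure, so $z(\alpha)=z(\alpha_P)$; the whole change in the genus is carried by the term $z(\alpha^{-1}\zeta_n)$. So the first step is to describe the permutation $\zeta=\alpha\circ(\text{relabeling})$ that plays the role of the ``new $\zeta_n$'': under the original numbering, the cyclic succession $\ell+1,\dots,n,1,\dots,i\mid i+1,\dots,j\mid j+1,\dots,k\mid k+1,\dots,\ell$ is replaced by the succession $A, D, C, B$. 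Thus the relabeled ``cyclic'' permutation $\zeta_n$ becomes a product of $\zeta_n$ with a small number of transpositions that splice the four arcs together in the new order $A\!-\!D\!-\!C\!-\!B\!-\!A$ instead of $A\!-\!B\!-\!C\!-\!D\!-\!A$: going from the cyclic word $(A B C D)$ to $(A D C B)$ costs exactly two transpositions (reverse the block $BCD$ to $DCB$, which for three blocks is one swap of the outer two, i.e. one transposition at each of two junctions — more carefully, it is the product of the two ``junction transpositions'' that exchange the endpoints $i\leftrightarrow k$ type of thing). I would make this precise by writing $\zeta_n = \zeta_A\,\zeta_D\,\zeta_C\,\zeta_B\cdot(\text{two transpositions joining the blocks in the new order})$ versus the old order, so that the new reference permutation differs from $\alpha^{-1}\zeta_n$-conjugate data by composition with \emph{two} transpositions.

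Given that, I would apply Lemma~\ref{lemma:transpositions} twice: composing $\alpha^{-1}\zeta_n$ with one transposition changes $z$ by $\pm1$, and with two transpositions changes it by $0$ or $\pm2$. Since the original configuration has genus $0$, i.e. $z(\alpha_P)+z(\alpha_P^{-1}\zeta_n)=n+1$, after the two transposition steps we get $z(\alpha)+z(\alpha^{-1}\zeta_n)\in\{n+1, n-1, n-3\}$, but $z(\alpha)+z(\alpha^{-1}\zeta_n)\le n+1$ always (genus is nonnegative) and the two changes are $\pm1$ each, so the only reachable values consistent with a genuine permutation are $n+1$ (genus $0$) and $n-1$ (genus $1$); the value $n-3$ would require both transpositions to ``merge'' and is impossible here because the two transpositions act on overlapping supports in a way that cannot both be merging — this non-overlap/overlap bookkeeping is exactly what I expect to be the main obstacle, and it is where the combinatorics of the arcs enters. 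This already proves the first assertion: $\Phi(P,\gamma)$ always has genus $0$ or $1$.

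For the characterization of when the genus is exactly $1$, I would read off, from Lemma~\ref{lemma:transpositions}, that each of the two junction transpositions $(p,q)$ lowers $z$ (contributing $+1$ to $2g$) precisely when $p$ and $q$ lie in \emph{different} cycles of the intermediate permutation, and raises it otherwise. Translating ``$p,q$ in different cycles of $\alpha^{-1}\zeta_n$-type data'' back to a statement about the parts of $P$ and their colors is the crux: I claim that the first junction transposition fails to be a merge (equivalently, genus stays $0$ at that step) exactly when no part of $P$ ``straddles'' the corresponding arc boundary in the forbidden way, and similarly for the second. Unwinding this, a drop of genus occurs iff some part meets three colors or four colors (handled by a single straddling part crossing two boundaries — case (1)), or two distinct parts each straddle one boundary with a shared color, which is precisely the configuration $P_q\subseteq X\cup Y$, $P_r\subseteq X\cup Z$ with $X,Y,Z$ distinct in case (2). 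Here Remark~\ref{rem:colors} does the heavy lifting, since it already records exactly which parts become twisted and which pairs of bicolored parts cross; a three- or four-colored part yields a twisted cycle (back point $\Rightarrow$ positive genus by Lemma~\ref{lemma:nbBackPoints}), and the crossing pair in (2) contributes the back point on the other side.

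I would organize the write-up as: (a) identify the reference permutation and express it through $\zeta_n$ and two transpositions; (b) apply Lemma~\ref{lemma:transpositions} to get genus $\le 1$ and pin down the two possible cycle-count values; (c) for each junction transposition, characterize the ``merge'' case in terms of a part straddling that arc boundary; (d) combine the two boundary analyses to see that genus $1$ happens iff condition (1) or condition (2) holds, checking separately that a three-colored part forces (1) via a single part crossing both boundaries and that two bicolored parts sharing a color force (1) or (2); and (e) confirm the converse directions using Remark~\ref{rem:colors} and Lemma~\ref{lemma:nbBackPoints} (twisted cycle $\Rightarrow$ back point $\Rightarrow$ $g\ge 1$, hence $g=1$). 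The main obstacle, as noted, is the careful support-overlap analysis of the two junction transpositions — ensuring the genus cannot drop by $2$ and correctly matching each of the two possible $+1$ contributions to exactly one of the two stated combinatorial conditions; everything else is bookkeeping against the already-proved lemmas and the remark.
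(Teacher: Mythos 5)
Your overall route is the same as the paper's: realize the recoloring as a conjugation, note that the conjugated reference permutation $\theta$ is $\zeta_n$ composed with exactly two transpositions (the paper computes $\theta=\zeta_n(a,c)(b,d)$ with $(a,b,c,d)$ given by \eqref{eq:abcd}), apply Lemma~\ref{lemma:transpositions} twice to bound the genus, and settle the characterization by Remark~\ref{rem:colors} together with the fact that a permutation with no twisted cycle and no crossing cycles is the permutation of a noncrossing partition, hence of genus $0$. In that sense steps (a), (b) and (e) of your plan coincide with the paper's proof, and your plan (c)--(d), a merge/split analysis of the two junction transpositions, is a heavier route to the equivalence than is needed: once genus $\le 1$ is known, ``(1) or (2) holds $\Rightarrow$ twisted cycle or crossing pair $\Rightarrow$ genus $\ge 1$'' and ``neither holds $\Rightarrow$ all parts unicolored or bicolored with no shared-color pattern $\Rightarrow$ no twisted and no crossing cycles $\Rightarrow$ genus $0$'' finishes both directions directly from Remark~\ref{rem:colors}.

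The genuine flaw is in the step you single out as ``the main obstacle''. With correct bookkeeping, the reachable values of $z(\alpha)+z(\alpha^{-1}\zeta_n)$ are $n-1$, $n+1$, $n+3$, not $n+1$, $n-1$, $n-3$: one has $z(\alpha)+z(\alpha^{-1}\theta)=n+1$ exactly (the genus-zero fact transported by conjugation), and $\alpha^{-1}\zeta_n$ differs from $\alpha^{-1}\theta$ by two transpositions, so the sum moves by $0$ or $\pm 2$. The value that must be excluded is $n+3$, i.e.\ genus $-1$, and this is free because the genus is a non-negative integer; your $n-3$ (genus $2$) is simply unreachable, since two transpositions cannot change a cycle count by $4$. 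Hence no support-overlap analysis is needed, and the one you propose would in fact fail: the supports $\{a,c\}$ and $\{b,d\}$ are disjoint except possibly for the single point $b=c$, and nothing prevents both multiplications from merging cycles --- on the contrary, in the passage from $\alpha^{-1}\theta$ to $\alpha^{-1}\zeta_n$ the case ``both merge'' is exactly the genus-one case, which certainly occurs (the paper's running example has genus $1$). So the obstacle you planned to work around is a phantom created by a sign slip, and the argument you sketch to dispose of it is not valid; replacing it by the non-negativity of the genus (which you already mention in passing) restores the proof and makes it the paper's.
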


\begin{proof}
Let $i, j, k, \ell$ define the four-coloring $\gamma$ and let 
$\beta$ be the permutation associated to the partition $P$, set
$\alpha = \Phi(P, \gamma)$. The renumbering of the points around the circle
may be considered in two ways:

\medskip
The first way is conjugation. Consider the permutation $\phi$ that
  takes each $i$ into its new label after the renumbering operation. We
  then have $\alpha = \phi \beta \phi^{-1}$. Note that $\phi$ is given by
the coloring points $(i,j,k,\ell)$ via the formula 
\begin{equation}
\label{eq:phi}
\phi(x)=
\left\{
\begin{array}{rl}
x & \mbox{if $x \in A$};\\
x+\ell-j & \mbox{if $x \in B$};\\
x+i+\ell -j- k & \mbox{if $x \in C$};\\
x+i -k & \mbox{if $x \in D$.}\\
\end{array}
\right.
\end{equation}
Although this formula is unimportant for this proof, we will have good
use of it later in the proof of the converse of our present statement. Now let
$\theta = \phi \zeta_n \phi^{-1}$, since conjugation does not change the
number of cycles we have: 
 
\begin{equation}
\label{eq:genusTheta}
g(\zeta_n, \beta) = g(\theta, \alpha) = 0.
\end{equation}
Since $\theta$ has only one cycle, just like $\zeta_n$, the above
equation, together with formula~(\ref{eq:genusdef}) yields
\begin{equation}
\label{eq:ztheta}
n+1-z(\alpha)=z(\alpha^{-1}\theta).
\end{equation} 

\medskip
The second way is multiplication by transpositions. It is easy 
to check that 
\begin{equation}
\label{eq:theta}
\theta = (1,2, \ldots, a, c+1, \ldots, d, b+1, \ldots, c, a+1, \ldots,
b, d+1, \ldots, n),
\end{equation}
where $(a,b,c,d)$ is given by (\ref{eq:abcd}),  hence
$\theta=\zeta_n (a,c)(b,d)$.   
We are now able  to compute the genus of the permutation $\alpha$.
By Definition~\ref{def:permgenus} we have
$$
2g(\alpha)=n+1-z(\alpha)+z(\alpha^{-1}\zeta_n).
$$ 
Using (\ref{eq:ztheta}) we may rewrite the last equation as 
$$ 2g(\alpha) = z(\alpha^{-1} \theta)- z(\alpha^{-1} \zeta_n).$$
But since $\alpha^{-1} \theta$ is obtained from $\alpha^{-1} \zeta_n$
by multiplying by two transpositions, by Lemma \ref{lemma:transpositions}, 
the difference of their number of cycles is $0$, $2$ or $-2$. Since the
genus is a non-negative integer we have that $g(\alpha)$ is $0$ or $1$.
If any of the  conditions given above are satisfied  then  $\alpha$ has a
twisted cycle or two crossing cycles hence it cannot be of genus $0$,
ending the proof, if none of them is satisfied then  
$\alpha$ has no twisted cycle and no two crossing cycles, it is then of
genus $0$ (a permutation of a noncrossing partition).
\end{proof}

\noindent
To state a converse of Theorem~\ref{thm:4cg1} we introduce the following
notion:
\begin{definition}
\label{def:sepp}
Let $\alpha$ be a permutation of genus $1$. We say that
the sequence of integers $(a, b, c, d)$ is a {\em sequence
  of separating  points} for $\alpha$ if the permutation $\theta =\zeta_n
(a,c)(b,d)$ is such that the  genus of the hypermap
$(\theta,\alpha)$ is zero and 
\begin{equation}
\label{ineq:abcd}
a  < b \leq c < d.
\end{equation}
\end{definition}

Notice that (\ref{ineq:abcd}) implies that $\theta$ is a circular permutation.
Equations~(\ref{eq:genusTheta}) and (\ref{eq:theta}) have the following
consequence. 
\begin{remark} 
\label{rem:colsep}
If a permutation $\alpha$ of genus $1$ is represented as $\alpha=\Phi(P,
\gamma)$ by a four-colored noncrossing partition $(P,\gamma)$ then the
sequence of coloring points $(i,j,k,\ell)$ gives rise to the sequence of
separating points $(a,b,c,d)$ given by (\ref{eq:abcd}).
\end{remark}

\begin{proposition}
\label{prop:coloringp}
Let $\alpha$ be a permutation of genus $1$ on $n$ elements that has a sequence of
separating points $(a, b, c, d)$. Then there is a noncrossing partition 
$P$ and a four-coloring $\gamma = (A,B,C,D)$ representing $\alpha$ as 
$\alpha = \Phi(P, \gamma)$ whose sequence of coloring points
$(i,j,k,\ell)$ is obtained from $(a,b,c,d)$ via (\ref{eq:ijkl}).
\end{proposition}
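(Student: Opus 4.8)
The plan is to run the construction behind Theorem~\ref{thm:4cg1} backwards: from the separating points $(a,b,c,d)$ we first reconstruct the relabeling permutation $\phi$, then conjugate $\alpha$ by $\phi^{-1}$ to land on a permutation of genus $0$, and finally invoke \cite[Theorem~1]{Cori} to recognize that permutation as $\alpha_P$ for a noncrossing partition $P$.

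First I would set $(i,j,k,\ell)$ to be the image of $(a,b,c,d)$ under the involution (\ref{eq:ijkl}), namely $i=a$, $j=a+d-c$, $k=a+d-b$, $\ell=d$. The inequalities (\ref{ineq:abcd}) give at once $1\le i<j\le k<\ell\le n$, so $(i,j,k,\ell)$ is a legitimate sequence of coloring points in the sense of Definition~\ref{def:coloringp}: the arcs $A,B,D$ defined by (\ref{eq:ABCD}) are nonempty, $A$ contains the point $1$, and $C$ (the only arc allowed to be empty) is empty exactly when $b=c$. I would then define $\phi\in\Sym{n}$ by formula (\ref{eq:phi}) applied to these coloring points; the same inequalities guarantee that $\phi$ is a bijection.

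Next I would check that $\phi\zeta_n\phi^{-1}$ is the permutation $\theta=\zeta_n(a,c)(b,d)$ attached to $(a,b,c,d)$ in Definition~\ref{def:sepp}. Since $(a,b,c,d)$ is recovered from $(i,j,k,\ell)$ through (\ref{eq:abcd}) (this is what the involution (\ref{eq:ijkl}) says), this is precisely the computation recorded in (\ref{eq:theta}): conjugating $\zeta_n=(1,2,\ldots,n)$ by $\phi$ yields the cyclic word $(\phi(1),\ldots,\phi(n))=(1,\ldots,a,c+1,\ldots,d,b+1,\ldots,c,a+1,\ldots,b,d+1,\ldots,n)=\zeta_n(a,c)(b,d)$. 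Now set $\beta=\phi^{-1}\alpha\phi$. Because the genus of a hypermap in (\ref{eq:genusdef}) depends only on the cycle numbers $z(\sigma)$, $z(\alpha)$, $z(\alpha^{-1}\sigma)$, and these are invariant under simultaneous conjugation, $g(\zeta_n,\beta)=g(\phi\zeta_n\phi^{-1},\phi\beta\phi^{-1})=g(\theta,\alpha)=0$; thus $\beta$ is a permutation of genus $0$, and by \cite[Theorem~1]{Cori} there is a (unique) noncrossing partition $P$ with $\beta=\alpha_P$.

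Finally I would take $\gamma=(A,B,C,D)$ to be the four-coloring of $P$ whose coloring points are $(i,j,k,\ell)$, legitimate by the first step. By the very definition of $\Phi$ the renumbering it performs on the points of $P$ is exactly the map $\phi$ of (\ref{eq:phi}), so $\Phi(P,\gamma)=\phi\alpha_P\phi^{-1}=\phi\beta\phi^{-1}=\alpha$, and by construction the coloring points of this representation are $(i,j,k,\ell)$, obtained from $(a,b,c,d)$ via (\ref{eq:ijkl}). The only step requiring genuine care, rather than transcription of bookkeeping already carried out in Section~\ref{sec:4cp}, is the identity $\phi\zeta_n\phi^{-1}=\theta$, which is what makes the abstract separating data $(a,b,c,d)$ coincide with the arc decomposition having coloring points $(i,j,k,\ell)$; once that is in hand, the statement reduces to the argument for Theorem~\ref{thm:4cg1} read in reverse together with a single appeal to Cori's characterization of genus $0$ permutations.
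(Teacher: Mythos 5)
Your proposal is correct and follows essentially the same route as the paper: define $(i,j,k,\ell)$ by (\ref{eq:ijkl}), take $\phi$ as in (\ref{eq:phi}) so that $\phi\zeta_n\phi^{-1}=\theta=\zeta_n(a,c)(b,d)$, conjugate to get a genus-zero $\beta=\phi^{-1}\alpha\phi$ corresponding to a noncrossing partition $P$, and read off $\alpha=\Phi(P,\gamma)$. The only cosmetic difference is that the paper first posits $\phi$ abstractly (unique with $\phi(1)=1$ conjugating $\zeta_n$ to $\theta$) and then identifies it with (\ref{eq:phi}), whereas you define it by the formula and verify the conjugation; the substance is identical.
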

\begin{proof}

Since $\theta =\zeta_n (a,c)(b,d)$ is circular, there is a permutation
$\phi$ satisfying $\phi \zeta_n \phi^{-1} =\theta$. We make this map
$\phi$ unique by requiring $\phi(1)=1$. It is easy to verify that $\phi$
is given by (\ref{eq:phi}) for the sequence $(i,j,k,l)$ given by
(\ref{eq:ijkl}). The permutation $\beta =
\phi^{-1} \alpha \phi$ satisfies 
$$g(\zeta_n, \beta) = g(\phi^{-1} \theta \phi, \phi^{-1} \alpha \phi)=
g(\theta,\alpha)= 0,$$ 
hence $\beta$ determines a noncrossing partition $P$. As a consequence
of (\ref{eq:ABCD}) and (\ref{eq:phi}), the four-coloring $\gamma$
associated to $(i,j,k,\ell)$ satisfies $\alpha = \Phi(P, \gamma)$.
\end{proof}

\begin{definition}
We call the representation described in Proposition~\ref{prop:coloringp}
{\em the four-colored noncrossing partition
representation induced by the sequence of separating points $(a,b,c,d)$}.
\end{definition}

Now we are ready to state the converse of Theorem~\ref{thm:4cg1}.
\begin{theorem}
\label{theorem:existColor}
For any permutation $\alpha$ of genus $1$, there exists a noncrossing 
partition $P$ 
and a four-coloring $\gamma$ such that $\alpha = \Phi(P, \gamma)$.
\end{theorem}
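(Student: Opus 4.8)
The goal is to show every genus $1$ permutation $\alpha \in \Sym{n}$ admits a four-colored noncrossing partition representation. By Proposition~\ref{prop:coloringp}, it suffices to exhibit a sequence of separating points $(a,b,c,d)$ with $a < b \le c < d$ for $\alpha$, i.e.\ to find two transpositions $(a,c)$ and $(b,d)$ such that $\theta = \zeta_n (a,c)(b,d)$ is circular and $g(\theta,\alpha) = 0$. So the entire burden reduces to a search-and-verify argument for these four integers. First I would unwind what $g(\theta,\alpha)=0$ means via~(\ref{eq:genusdef}): since $\theta$ is circular, $z(\theta)=1$, so we need $z(\alpha) + z(\alpha^{-1}\theta) = n+1$; comparing with Definition~\ref{def:permgenus} and using $g(\alpha)=1$, this is equivalent to $z(\alpha^{-1}\theta) = z(\alpha^{-1}\zeta_n) + 2$. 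Since $\alpha^{-1}\theta = \alpha^{-1}\zeta_n (a,c)(b,d)$, Lemma~\ref{lemma:transpositions} tells us each of the two transpositions must \emph{increase} the cycle count of the running product by $1$; equivalently, $a$ and $c$ must lie in the same cycle of $\alpha^{-1}\zeta_n$, and then $b,d$ must lie in the same cycle of $\alpha^{-1}\zeta_n (a,c)$.

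The natural way to produce such points is to use the back-point analysis from Lemma~\ref{lemma:nbBackPoints}: since $g(\alpha)=1$, the total number of back points of $\alpha$ and of $\alpha^{-1}\zeta_n$ is exactly $2$, so either $\alpha$ has a twisted cycle (a cycle with a back point) or $\alpha^{-1}\zeta_n$ does, or each has exactly one. The plan is to locate the ``defect'' — a single crossing or a single twist — and read off the separating points from it. Concretely, I would split into the cases suggested by Remark~\ref{rem:colors} and Theorem~\ref{thm:4cg1}: either there is one twisted cycle of $\alpha$ (and otherwise the cycles are noncrossing and untwisted), or there are two cycles of $\alpha$ that cross once (and otherwise everything is tame). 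In the twisted-cycle case, a cycle $(b_1,\dots,b_p)$ with a back point gives, after one rewriting, four marked positions along that cycle; in the crossing case, the crossing quadruple $a < b < a' < b'$ with $a'=\alpha(a)$, $b'=\alpha(b)$ (as observed after the definition of crossing cycles) already supplies four points in the right cyclic configuration. I would then set $(a,b,c,d)$ to be these four values in increasing order and check (i) the inequality $a<b\le c<d$ and (ii) that $\theta=\zeta_n(a,c)(b,d)$ kills the genus, i.e.\ that multiplying $\alpha^{-1}\zeta_n$ by $(a,c)$ then $(b,d)$ raises the cycle count by $2$ — which by Lemma~\ref{lemma:transpositions} is a purely local check on which cycle of $\alpha^{-1}\zeta_n$ each of $a,b,c,d$ sits in.

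The main obstacle, and the step that needs the most care, is the case analysis on exactly how a genus-$1$ permutation can fail to be noncrossing, together with verifying that the four points extracted from the defect are genuinely ``separating'' — that is, that $(a,c)$ and $(b,d)$ act on \emph{distinct} cycles (or the same cycle in the correct way) of the relevant products so that Lemma~\ref{lemma:transpositions} gives $+1$ each time rather than $-1$. One has to rule out degenerate configurations (e.g.\ two of the four points coinciding, or the ordering collapsing $b=c$ in a way that violates circularity of $\theta$) and confirm that $b \le c$ can always be arranged. I expect this is handled by normalizing the defect: choosing $a$ minimal in its cycle, choosing the back point canonically, and invoking the observation after the crossing-cycles definition to force $a'=\alpha(a)$, $b'=\alpha(b)$. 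Once the four points are pinned down and the two cycle-count increments are verified locally, Proposition~\ref{prop:coloringp} delivers the noncrossing partition $P$ and coloring $\gamma$ with $\alpha = \Phi(P,\gamma)$, completing the proof.
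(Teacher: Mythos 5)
Your reduction of the problem to exhibiting a sequence of separating points (via Proposition~\ref{prop:coloringp}), and your translation of $g(\theta,\alpha)=0$ into the requirement that multiplying $\alpha^{-1}\zeta_n$ first by $(a,c)$ and then by $(b,d)$ must raise the cycle count by $2$, are exactly right and agree with the paper. The gap is in where you propose to find the four points. You extract them from a defect of $\alpha$ itself (a twisted cycle of $\alpha$, or a crossing pair of $\alpha$ with $a'=\alpha(a)$, $b'=\alpha(b)$), but the same-cycle conditions you yourself derived are conditions on the cycles of $\alpha^{-1}\zeta_n$, and a crossing or twist of $\alpha$ gives no control over those. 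Concretely, take $\alpha=(1,3)(2,4)\in\Sym{4}$, a genus $1$ partition whose only defect is the crossing $1<2<3<4$ with $\alpha(1)=3$, $\alpha(2)=4$ (so your normalization picks exactly this quadruple). Here $\alpha^{-1}\zeta_4=(1,4,3,2)$ is a single cycle; multiplying by $(1,3)$ gives $(1,2)(3,4)$, and then $(2,4)$ joins these two cycles, so the cycle count goes $1\to 2\to 1$ and $g(\zeta_4(1,3)(2,4),\alpha)=1$, not $0$. Thus your verification step (ii) fails on this choice and no repair is indicated in the plan; note also that a genus $1$ permutation need not have only ``one crossing and otherwise everything tame'' (e.g.\ $(1,4)(2,5)(3,6)$ has genus $1$ with three pairwise crossing cycles), so the case split as you phrase it is not exhaustive either.

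The missing idea, which is the crux of the paper's proof, is to pass to $\alpha'=\alpha^{-1}\zeta_n$: since $(\alpha')^{-1}\zeta_n=\zeta_n^{-1}\alpha\zeta_n$ is conjugate to $\alpha$, the permutation $\alpha'$ also has genus $1$, hence it is not the permutation of a noncrossing partition and must have two crossing cycles or a twisted cycle. Reading the four points off $\alpha'$ makes Lemma~\ref{lemma:transpositions} apply verbatim: if $a<b<c<d$ with $a,c$ in one cycle of $\alpha'$ and $b,d$ in another, each transposition splits a cycle of $\alpha'$, giving the needed $+2$; if $\alpha'$ has a twisted cycle $(a,x_1,\ldots,x_p,d,b,y_1,\ldots,y_q)$ with $a$ minimal and $b<d$, the paper checks directly that $\alpha'(a,b)(b,d)$ breaks this cycle into three, yielding the degenerate separating sequence $(a,b,b,d)$ — in this case only three distinct points occur, with $c=b$, a configuration your sketch (``four marked positions along that cycle'') does not anticipate. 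In the counterexample above this recipe gives $(1,3,3,4)$ and indeed works. So while your high-level strategy coincides with the paper's, the concrete construction targets the wrong permutation, and fixing it is precisely the content of the paper's argument.
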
 

\begin{proof}
By Proposition~\ref{prop:coloringp} it suffices to show that every
permutation of genus $1$ has a sequence $(a,b,c,d)$ of separating points.
Let $\alpha$ be a permutation of genus $1$, then the permutation
$\alpha' = \alpha^{-1}\zeta_n$ is also of genus $1$, thus $\alpha'$ has two
crossing cycles or a twisted cycle or both. 

\begin{enumerate}
\item
If $\alpha'$ has two crossing cycles then one of these cycle contains two points $a,c$
and the other one two points $b,d$ such that $a< b < c < d$.

By (\ref{ineq:abcd}), $\theta = \zeta_n(a,c)(b,d)$ is circular. Moreover
$\alpha^{-1}\theta$ is obtained from $\alpha^{-1}\zeta_n$ by multiplying
it by two transpositions exchanging elements belonging to the same cycle,
hence $ z(\alpha^{-1}\theta) = z(\alpha^{-1}\zeta_n) +2$.
By the definition of the genus, since $z(\theta ) = z (\zeta_n)$, we get
$g(\theta, \alpha) = g(\alpha) - 1 = 0$.

\item If $\alpha'$ has a twisted cycle, this can be written
$(a, x_1, \cdots , x_p, d, b, y_1, \cdots y_q),$  where
$a$ is the smallest element of the cycle and $d>b$, 
giving $ a< b < d$.
Consider the two transpositions $(a,b)$ and $(b,d)$
It easy to check that the product 
$\theta =\zeta_n(a,b)(b,d)$ is equal to:  $ (1, 2, \cdots a, b+1, \cdots d, a+1, \cdots b, d+1, \cdots n)$.
Moreover, the permutation $\alpha'(a,b)(b,d)$ has the same cycles
as $\alpha'$ except the one containing $a,b,d$ which is broken into three
cycles:

$$ (a, y_1, \cdots y_q)  \   (b) \  (d, x_1, \cdots , x_p),$$

showing that again $$ z(\alpha^{-1}\theta) = z(\alpha^{-1}\zeta_n) +2$$
and $g(\theta, \alpha) = g(\zeta_n, \alpha) - 1 = 0$ hold.

\end{enumerate}
We obtained that, in the first case $(a,b,c,d)$, and in the second case
$(a,b,b,d)$, is a sequence of separating points for $\alpha$.
\end{proof}

It is easy to detect in a four-colored noncrossing partition representation
of a permutation of genus $1$ whether it is a partition, or whether it
has twisted cycles, as we will see in the following observations.

\begin{corollary}
\label{c:genus1part}
A permutation $\alpha$ of genus $1$ is a partition if and only if it  
may be represented by a four-colored noncrossing partition $(Q,
\gamma)$ that has no three or four-colored part and has at least two
two-colored parts. 
\end{corollary}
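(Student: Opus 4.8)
The plan is to characterize, among permutations $\alpha$ of genus $1$, exactly those that are partitions (no twisted cycle), and to show this coincides with existence of a four-colored noncrossing representation $(Q,\gamma)$ having no three- or four-colored part but at least two bicolored parts. By Remark~\ref{rem:colors}, a three- or four-colored part always produces a twisted cycle, so if $\alpha=\Phi(Q,\gamma)$ is a partition then $(Q,\gamma)$ cannot have such a part; this handles the ``no three/four-colored part'' clause automatically. The real content is the count of bicolored parts.

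For the forward direction: suppose $\alpha$ is a genus $1$ partition. Then $\alpha$ has no twisted cycle, so by Theorem~\ref{thm:4cg1} any four-colored representation of $\alpha$ (one exists by Theorem~\ref{theorem:existColor}) must satisfy condition~(2) rather than condition~(1); in particular it already has at least two bicolored parts sharing a color, and no three- or four-colored part. So we may simply take $(Q,\gamma)$ to be any such representation. For the converse: if $(Q,\gamma)$ has no three- or four-colored part, then by Remark~\ref{rem:colors} every cycle of $\alpha=\Phi(Q,\gamma)$ is either uncrossed-and-untwisted (from unicolored parts) or bicolored-hence-untwisted; thus $\alpha$ has no twisted cycle, i.e.\ $\alpha$ is a partition. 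Genus $1$ is forced by Theorem~\ref{thm:4cg1} since having at least two bicolored parts sharing a color realizes condition~(2) (one needs to check that two bicolored parts in a \emph{noncrossing} partition, each using two of the colors $A,B,C,D$ which are consecutive arcs, must share a color and line up as in condition~(2)'s inequalities — this is where the consecutivity of the arcs is used).

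The step I expect to be the main obstacle is verifying that ``at least two two-colored parts'' genuinely forces condition~(2) of Theorem~\ref{thm:4cg1}, i.e.\ that two bicolored parts of a noncrossing partition necessarily involve three distinct colors $X,Y,Z$ arranged so that one part lies in $X\cup Y$ meeting both, the other in $X\cup Z$ meeting both. A priori two bicolored parts could use the \emph{same} two colors $X,Y$; one must argue that in that configuration either there are in fact further bicolored parts producing a valid triple, or that — reconsidering — the situation still yields genus $1$ because such two same-colored bicolored parts, being noncrossing, nonetheless interact to raise the genus, or else that the statement should be read as: the representation \emph{can be chosen} with two bicolored parts realizing condition~(2). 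I would resolve this by invoking Remark~\ref{rem:colsep} together with the explicit form of $\theta=\zeta_n(a,c)(b,d)$ from~(\ref{eq:theta}): the separating points $(a,b,c,d)$ of the genus $1$ partition, transported to coloring points via~(\ref{eq:ijkl}), pin down which colors are nonempty, and counting the colors actually used by parts against $g(\alpha)=1$ via Lemma~\ref{lemma:nbBackPoints} applied to $\alpha$ and to $\alpha^{-1}\zeta_n$ forces at least two bicolored parts in the induced representation.

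Finally I would assemble the two directions into the stated ``if and only if,'' being careful that the forward direction only claims \emph{some} representation works (matching the ``may be represented'' phrasing) while the converse shows \emph{every} representation of the described type yields a genus $1$ partition; a short remark reconciling the two suffices to close the proof.
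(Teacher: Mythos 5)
Your two essential directions are correct and are essentially the paper's own (very short) argument: a three- or four-colored part would create a twisted cycle, which a partition cannot have (Remark~\ref{rem:colors}); since $\alpha$ has genus $1$, any representation (one exists by Theorem~\ref{theorem:existColor}) must then satisfy condition~(2) of Theorem~\ref{thm:4cg1}, which supplies two two-colored parts; and conversely a representation with no three- or four-colored part produces no twisted cycle, so $\alpha$ is a partition.

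The ``main obstacle'' you single out, however, rests on a misreading of the statement. The genus-$1$ hypothesis is standing on both sides of the equivalence, so in the converse you never need to show that the representation \emph{forces} genus $1$: you only need ``no three- or four-colored part $\Rightarrow$ no twisted cycle $\Rightarrow$ partition''; Theorem~\ref{thm:4cg1} already bounds the genus of $\Phi(Q,\gamma)$ by $1$, and the value $1$ is given. This is fortunate, because the claim you try to salvage --- that two two-colored parts necessarily realize condition~(2) --- is false without the ``sharing a color in the pattern $X\cup Y$, $X\cup Z$'' proviso: take $n=5$, $A=\{1,2\}$, $B=\{3,4\}$, $C=\emptyset$, $D=\{5\}$ and the noncrossing partition $\{1,4\},\{2,3\},\{5\}$; it has two parts bicolored by the same pair $A,B$, no three- or four-colored part, and $\Phi$ of it has genus $0$ by Theorem~\ref{thm:4cg1}. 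So the proposed rescue via Remark~\ref{rem:colsep} and Lemma~\ref{lemma:nbBackPoints} could not have worked, and happily is not needed: deleting that digression leaves a complete proof that matches the paper's.
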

Indeed, a three or four-colored part would give rise to a twisted cycle 
which partition can not have. Without twisted cycles, a permutation of
genus $1$ must have a pair of crossing cycles which can only be
represented by two-colored parts. 
To state our next observation, we introduce the notion of
simply and doubly twisted cycles.

\begin{remark}
\label{rem:3color}
For future reference we also note that every genus $1$ partition
$\alpha\in\Sym{n}$ has a three-colored 
non-crossing partition representation, that is, a four-colored
representation with $C=\emptyset$. Indeed, since $\alpha$ does not have
any back point, by Lemma~\ref{lemma:nbBackPoints}, $\alpha^{-1}\zeta_n$
must have two back points. We may use the construction presented in the
second case of the proof of Theorem~\ref{theorem:existColor} to
construct a three-colored noncrossing partition. A variant of this
observation was also made in \cite[p.\ 63]{Yip}.
\end{remark}

\begin{definition}
A cycle of $\alpha$ is simply twisted if
contains exactly one back point and 
it is doubly twisted if it has two back points.
\end{definition}

\begin{remark}
In a four-colored noncrossing partition representation of a permutation
of genus $1$, three colored parts correspond to simply twisted cycles
and four-colored parts correspond to doubly twisted
cycles.
\end{remark}

\begin{proposition}
\label{prop:4types}
In a permutation $\alpha$  of genus $1$, all cycles are either not twisted
or simply or doubly twisted. Moreover, exactly one of the
following assertions is satisfied:
\begin{enumerate}
\item   $\alpha$  has no twisted cycle, hence it corresponds to a partition;
\item $\alpha$   has a unique simply twisted cycle;
\item $\alpha$   has a unique doubly twisted cycle;
\item $\alpha$  has two simply twisted cycles.
\end{enumerate}
There is an example of a permutation of genus $1$ of each of the above
four types. 
\end{proposition}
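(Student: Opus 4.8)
The plan is to combine the genus-counting machinery of Section~\ref{sec:genus} with Lemma~\ref{lemma:nbBackPoints}. The key observation is that for a genus $1$ permutation $\alpha$, Lemma~\ref{lemma:nbBackPoints} gives $2g(\alpha)=2$, so the number of back points of $\alpha$ plus the number of back points of $\alpha^{-1}\zeta_n$ equals $2$. Hence $\alpha$ has either $0$, $1$, or $2$ back points, and a cycle of $\alpha$ can contain at most $2$ back points in total; since a back point forces the cycle to be twisted, every cycle is not twisted, simply twisted, or doubly twisted, and the total count of back points over all cycles is at most $2$.

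From here I would enumerate the possibilities for how the (at most two) back points of $\alpha$ are distributed among its cycles. If $\alpha$ has $0$ back points, then $\alpha$ has no twisted cycle; I then need to rule out that $\alpha$ is also a noncrossing partition (genus $0$), which is where I invoke that $\alpha$ has genus exactly $1$: by Theorem~\ref{thm:4cg1} and Theorem~\ref{theorem:existColor}, a genus $1$ permutation with no twisted cycle must have a pair of crossing cycles, so it is a (non-noncrossing) partition — this is case (1). If $\alpha$ has exactly $1$ back point, it lies in a single cycle, which is therefore simply twisted and is the only twisted cycle — case (2). If $\alpha$ has exactly $2$ back points, they either both lie in the same cycle, making it doubly twisted and unique — case (3) — or they lie in two distinct cycles, each simply twisted — case (4). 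These four alternatives are visibly mutually exclusive, since they are distinguished by the multiset of back-point counts $\{\,\}$, $\{1\}$, $\{2\}$, $\{1,1\}$ attached to cycles of $\alpha$.

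Finally I would supply one explicit example of each type on a small value of $n$ to justify the last sentence: a crossing partition such as $(1,3)(2,4)$ for type (1); a permutation with a single simply twisted cycle, e.g.\ $(1,3,2)$, for type (2); a permutation with a single doubly twisted cycle, e.g.\ $(1,3,2,4)$ or a cycle of the shape $(a,x,d,b,y)$ from the proof of Theorem~\ref{theorem:existColor}, for type (3); and a permutation with two simply twisted cycles, e.g.\ $(1,3,2)(4,6,5)$ suitably placed, for type (4). Each example is checked directly against Definition~\ref{def:permgenus}, counting back points of $\alpha$ and of $\alpha^{-1}\zeta_n$.

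The main obstacle is not the back-point bookkeeping, which is essentially immediate from Lemma~\ref{lemma:nbBackPoints}, but rather the exclusion step in case~(1): one must argue carefully that "no back point" does \emph{not} already force genus $0$, i.e.\ that the crossing structure picked up by $\alpha^{-1}\zeta_n$ (which does carry the two back points) genuinely translates into two crossing cycles of $\alpha$ itself, so that $\alpha$ is a genuine (crossing) partition rather than a noncrossing one. This is exactly the content already extracted in Corollary~\ref{c:genus1part} and Remark~\ref{rem:3color}, so the cleanest route is to cite those; the remaining verification that the four stated cases are exhaustive and disjoint is then routine.
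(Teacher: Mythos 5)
Your main argument coincides with the paper's: Lemma~\ref{lemma:nbBackPoints} caps the total number of back points of $\alpha$ at $2$, every back point sits in a twisted cycle, and the four cases are exactly the possible distributions of $0$, $1$ or $2$ back points over the cycles. That part is correct. Your extra worry about case (1) is unnecessary: the proposition only asserts that $\alpha$ with no twisted cycle corresponds to a partition, which is immediate from the equivalence ``no twisted cycle $\Leftrightarrow$ $\alpha=\alpha_P$ for some partition $P$''; genus $1$ is a hypothesis, so nothing about genus $0$ has to be excluded (citing Corollary~\ref{c:genus1part} is harmless but not needed).

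The genuine defect is in the examples, whose existence is part of the statement, and two of yours fail. First, $(1,3,2,4)$ is not doubly twisted: its only back point is $3$, since $\alpha(4)=1$ is the minimum of the cycle, so $4$ is not a back point; this permutation is of type (2), and the cycle shape $(a,\ldots,d,b,\ldots)$ from the proof of Theorem~\ref{theorem:existColor} does not guarantee two back points either. A correct type (3) example is $\zeta_4^{-1}=(1,4,3,2)\in\Sym{4}$: its back points are $3$ and $4$, and $\alpha^{-1}\zeta_4=\zeta_4^{2}=(1,3)(2,4)$, so $z(\alpha)+z(\alpha^{-1}\zeta_4)=3=5-2$ and the genus is $1$. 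Second, $(1,3,2)(4,6,5)$ has genus $2$, not $1$: one computes $\alpha^{-1}\zeta_6=(1,3,5,4,6,2)$, a single $6$-cycle, so $z(\alpha)+z(\alpha^{-1}\zeta_6)=3=7-4$. No ``suitable placement'' of two twisted cycles each occupying a block of consecutive points can work, because the genus is additive over such a decomposition (the computation in Lemma~\ref{lem:split} adapts), and each twisted block already contributes genus $1$; the two simply twisted cycles must interleave, as in $(1,5,3)(2,6,4)\in\Sym{6}$, whose back points are $5$ and $6$ and for which $\alpha^{-1}\zeta_6=(1,4)(2,5)(3,6)$, giving $2+3=7-2$ and genus $1$. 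With these corrections (the paper instead exhibits the four types by sketching four-colored noncrossing partition representations in its Figure~3), your proof is complete.
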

\begin{proof}
\begin{figure}[h]
\includegraphics{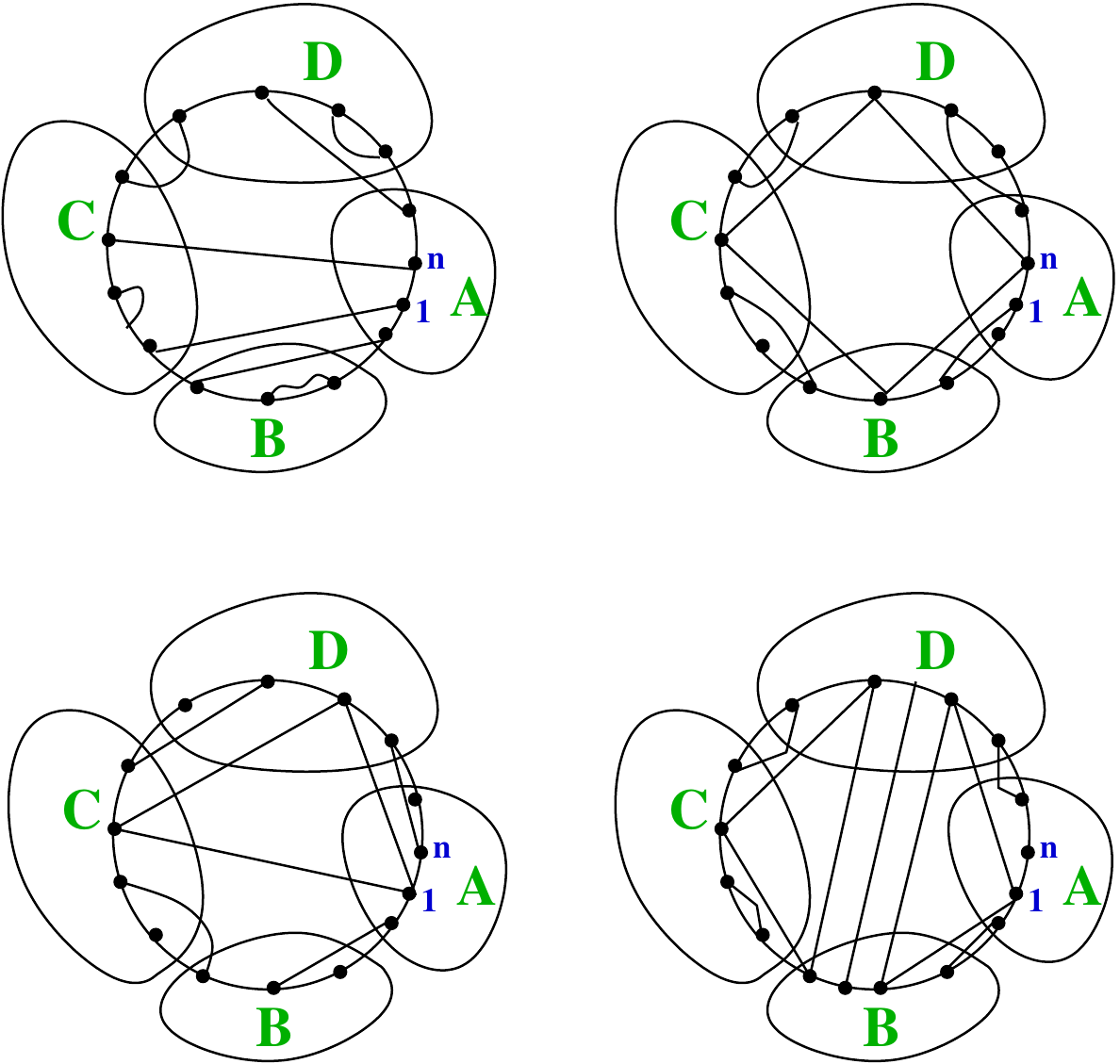}
\caption{The four types of genus $1$ permutations}
\label{fig:4cases}
\end{figure}
By Lemma~\ref{lemma:nbBackPoints}, a permutation of genus $1$ may have at
most two back points. If $\alpha$ has no back points then it is a
partition. If it has one back point then it has a unique simply twisted
cycle. If it has two back points, then these are either on the same
(doubly twisted) cycle, or on two separate (simply twisted cycle). 
An example of a permutation of each type is sketched using a
four-colored noncrossing partition representation in 
Figure~\ref{fig:4cases}. 
\end{proof}

\section{Reduced permutations and partitions}
\label{sec:redpp}

\begin{definition}
\label{def:trivcycle}
A {\em trivial} cycle  in a permutation is a cycle  consisting of consecutive 
points on the circle,
{\em i. e.}  a cycle $C_i = (i, i+1,  \ldots,  i+p)$ where sums
 are taken modulo $n$. A permutation is
{\em reduced} if it contains no trivial cycle.
\end{definition}

\begin{lemma}
\label{lemma:planarPartition}
Let $\theta$ and $\alpha$ be two permutations in $\Sym{n}$ 
 such that $\theta$ is circular and
$g(\theta, \alpha) = 0$. If an integer   $x$ satisfies
$$\alpha(x) = \theta^k(x) \ \ \ \ \ \ \ \ {\rm for } \ \ \ 1 < k <  n$$
then there exists a cycle of $\alpha$ consisting of consecutive
points in the sequence
$$ \theta(x),  \theta^2(x), \ldots, \theta^{k-1}(x) $$
\end{lemma}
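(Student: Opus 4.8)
The plan is to reduce to the case $\theta=\zeta_n$ by conjugation and then exploit the characterization of genus-zero permutations as noncrossing partitions. Since $\theta$ is circular, I would choose $\psi$ with $\theta=\psi\zeta_n\psi^{-1}$ and set $\alpha_0=\psi^{-1}\alpha\psi$; as genus is invariant under conjugation, $g(\zeta_n,\alpha_0)=g(\theta,\alpha)=0$. Writing $x_0=\psi^{-1}(x)$, the hypothesis $\alpha(x)=\theta^k(x)$ becomes $\alpha_0(x_0)=\zeta_n^k(x_0)$, and since $\psi$ carries the $\theta$-order to the natural cyclic order, a cycle of $\alpha_0$ made of consecutive integers inside $\{x_0+1,\dots,x_0+k-1\}$ corresponds to a cycle of $\alpha$ made of consecutive points inside $\theta(x),\dots,\theta^{k-1}(x)$. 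Hence I may assume $\theta=\zeta_n$, so that $\alpha$ is the permutation of a noncrossing partition by \cite[Theorem 1]{Cori}, and $\alpha(x)=x+k$ with $1<k<n$; in particular $x$ and $\alpha(x)$ are distinct points lying in a common cycle.

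First I would establish the enclosing step: the arc from $x$ to $\alpha(x)=x+k$ through $x+1,\dots,x+k-1$ cannot be straddled by any other part. Indeed, if some part other than the one containing $x$ had a point $p$ with $x<p<x+k$ together with a point $q$ lying cyclically outside $\{x,\dots,x+k\}$, then the quadruple $x<p<x+k<q$ (after a cyclic rotation if necessary) would exhibit a crossing of cycles, contradicting genus zero. Consequently every part meeting the interval $I=\{x+1,\dots,x+k-1\}$ is contained in $I$, so $\alpha$ restricts to a permutation of $I$, and this restriction is again a noncrossing partition on the interval $I$, which is nonempty since $k>1$.

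It then remains to show that a nonempty noncrossing partition on an interval always has a part consisting of consecutive points — a trivial cycle in the sense of Definition~\ref{def:trivcycle} — and I would prove this by induction on the number $m=k-1$ of points of $I$. Let $p$ be the least element of $I$ and $B$ its part; if $B=\{p\}$ we are done, otherwise let $q$ be the second least element of $B$. Applying the enclosing step to the arc from $p$ to $q$ shows that the gap $\{p+1,\dots,q-1\}$ is a union of complete parts forming a noncrossing partition on a strictly shorter interval; if this gap is nonempty the induction hypothesis supplies a consecutive part inside it, and points that are consecutive within $I$ are also consecutive on the circle. If instead every successive gap inside $B$ is empty, then $B=\{p,p+1,\dots,p+|B|-1\}$ is itself an interval, hence a consecutive cycle. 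Either way we obtain a cycle of $\alpha$ made of consecutive points inside $I$, as required.

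I expect the only delicate point to be the enclosing step, where the noncrossing condition must be applied correctly across the cyclic boundary; once the interior of the arc from $x$ to $\alpha(x)$ is seen to close off into a smaller noncrossing partition, the existence of a trivial cycle follows by the routine induction above. Note that the argument yields \emph{some} consecutive cycle inside the listed sequence, not that the whole sequence forms one cycle, which is exactly the asserted conclusion.
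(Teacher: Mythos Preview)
Your proof is correct and follows essentially the same route as the paper's: both conjugate by a permutation $\phi$ with $\phi\theta\phi^{-1}=\zeta_n$ to reduce to the noncrossing case, and both then argue by repeatedly using the ``enclosing'' observation that any part meeting the open interval between two consecutive elements of a block must lie entirely inside that interval. The paper states this observation once and says ``apply repeatedly'', whereas you organize the repetition as a formal induction on the length of the interval; the content is the same.
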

\begin{proof}
Use conjugation by a permutation $\phi$ such that 
$\phi\theta\phi^{-1} = \zeta_n$. Then the statement follows by repeated
use of the following, trivial observation: if a noncrossing partition
contains a part $a_1 < a_2 < \dots < a_p$ such that one of the $a_i$'s 
satisfies  $a_{i+1} > a_i +1$ then there is another part contained in
the set $\{a_i+1, a_i+2,\ldots, a_{i+1}-1\}$. Applying the same
observation repeatedly, we end up with a part consisting of consecutive
integers greater than $a_i$ and less  than $a_{i+1}$. 
\end{proof}

As a consequence of Lemma~\ref{lemma:planarPartition}, a permutation $\alpha$
of  genus $1$ is reduced if and only if each of its cycles either
crosses another one or it is twisted. Indeed, by Remark~\ref{rem:colors}, a
cycle that does not cross any other cycle and is not twisted corresponds
to a unicolored part in a four-colored noncrossing partition
representing $\alpha$ and, by Lemma~\ref{lemma:planarPartition}, the
same color set contains a part consisting of consecutive points, which
represents a trivial cycle. Thus the representation of a reduced
$\alpha$ can not have unicolored parts.  

We now define for a reduced permutation $\alpha$ of genus $1$
a canonical sequence  of separating points and 
the canonical representation of it as a four-colored noncrossing partition.

\begin{definition}
\label{def:cancs}
Let $\alpha$  be a reduced  permutation of genus $1$. The {\em canonical
  sequence of separating points $(a,b,c,d)$} of $\alpha$ is defined as
follows: 
\begin{enumerate}
\item  $a$ is the smallest integer such that  $\alpha(a) \neq a+1$; 
\item $b$ is the smallest integer  satisfying $b>a$ and such that either
  $\alpha(b) > \alpha(a)$ or $\alpha(b) \leq a$ holds;
\item $c = \alpha(a) -1$;
\item $d = n$ if $\alpha(b) = 1$ and $d = \alpha(b)-1$ otherwise.
\end{enumerate}
We call the four-colored noncrossing partition representation induced by
the canonical sequence of separating points the {\em canonical
  representation} of $\alpha$.
\end{definition}

In the proof of Proposition~\ref{prop:cancs} below we will show that
the canonical sequence of separating points exists, it is unique, and it
is indeed a sequence of separating points, giving rise to a four-colored
noncrossing partition representation. Our proof relies on the following lemma.
\begin{lemma}
\label{lem:split}
Let $\alpha$ be a permutation of $\Sym{n}$  such that for some $a$ satisfying $a +1 < \alpha(a)$,
the set $X_1= \{a+1, a+2, \ldots, \alpha(a) -1\}$ is a union of cycles of $\alpha$. Then $\alpha$ may be split
into two permutations $\alpha_1$ acting on $X_1$ and $\alpha_2$ acting on 
$X_2 = \{1, 2, \ldots, n\} \setminus X_1$ such that
$$ g(\alpha) =   g(\alpha_1) +  g(\alpha_2)$$
\end{lemma}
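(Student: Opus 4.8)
The plan is to build $\alpha_1$ and $\alpha_2$ directly as the restrictions of $\alpha$ to the invariant sets $X_1$ and $X_2$, and then to relate the genus of $\alpha$ to those of $\alpha_1$ and $\alpha_2$ via Definition~\ref{def:permgenus}. Since $X_1 = \{a+1,\dots,\alpha(a)-1\}$ is a union of cycles of $\alpha$, its complement $X_2$ is also a union of cycles, so $\alpha$ restricts to honest permutations $\alpha_1 \in \Sym{X_1}$ and $\alpha_2 \in \Sym{X_2}$, and clearly $z(\alpha) = z(\alpha_1) + z(\alpha_2)$. The work is entirely in understanding $z(\alpha^{-1}\zeta_n)$. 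First I would observe that $X_1$ is a block of consecutive integers, so $\zeta_n$ restricted to $X_1$ is \emph{almost} the circular permutation $\zeta_{X_1}$ on $|X_1|$ points: it agrees with it everywhere except that $\zeta_n$ sends the top element $\alpha(a)-1$ of $X_1$ outside of $X_1$ (to $\alpha(a)$), while $\zeta_{X_1}$ would wrap it back to $a+1$. Symmetrically, $\zeta_n$ restricted to $X_2 = \{\alpha(a),\dots,n,1,\dots,a\}$ (also a cyclic block) agrees with the circular permutation $\zeta_{X_2}$ on $X_2$ except that $\zeta_n$ sends the element $a$ to $a+1 \in X_1$, whereas $\zeta_{X_2}$ would wrap $a$ to $\alpha(a)$.

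The key computational step is then to compare the cycle structures of $\alpha^{-1}\zeta_n$ on the one hand, and of $\alpha_1^{-1}\zeta_{X_1}$ together with $\alpha_2^{-1}\zeta_{X_2}$ on the other. I claim $z(\alpha^{-1}\zeta_n) = z(\alpha_1^{-1}\zeta_{X_1}) + z(\alpha_2^{-1}\zeta_{X_2}) - 1$. To see this, note that $\alpha^{-1}\zeta_n$ and $(\alpha_1^{-1}\zeta_{X_1}) \cdot (\alpha_2^{-1}\zeta_{X_2})$ (as permutations of $\{1,\dots,n\}$) differ only in their action on the two points $a$ and $\alpha(a)-1$: the ``disconnected'' version maps $\alpha(a)-1 \mapsto \alpha^{-1}(a+1)$ (through the wrap in $X_1$) and $a \mapsto \alpha^{-1}(\alpha(a))=a$ — wait, more carefully, one should track where $\zeta_n(a)=a+1$ and $\zeta_n(\alpha(a)-1)=\alpha(a)$ go under $\alpha^{-1}$ versus where the wrapped images go; the upshot is that the two permutations differ precisely by one transposition. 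Indeed $\bigl(\alpha^{-1}\zeta_n\bigr) = \bigl(\alpha_1^{-1}\zeta_{X_1}\bigr)\bigl(\alpha_2^{-1}\zeta_{X_2}\bigr)\cdot \tau$ for a transposition $\tau$ swapping one element of $X_1$ with one of $X_2$ (the ``glue'' between the two blocks), so by Lemma~\ref{lemma:transpositions} the number of cycles changes by exactly $\pm 1$; and since $X_1,X_2$ are genuinely separate orbits in the product but must be connected in $\alpha^{-1}\zeta_n$ (because $\zeta_n$ is a single $n$-cycle linking the two blocks through $a \mapsto a+1$), multiplying by $\tau$ must \emph{merge} two cycles into one, giving the claimed relation with the $-1$.

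Assembling the pieces: by Definition~\ref{def:permgenus} applied to $\alpha_1$ on $|X_1|$ points and $\alpha_2$ on $|X_2| = n - |X_1|$ points,
\[
2g(\alpha_1) = |X_1| + 1 - z(\alpha_1) - z(\alpha_1^{-1}\zeta_{X_1}),
\qquad
2g(\alpha_2) = |X_2| + 1 - z(\alpha_2) - z(\alpha_2^{-1}\zeta_{X_2}),
\]
and adding these, then substituting $z(\alpha)=z(\alpha_1)+z(\alpha_2)$ and $z(\alpha^{-1}\zeta_n) = z(\alpha_1^{-1}\zeta_{X_1}) + z(\alpha_2^{-1}\zeta_{X_2}) - 1$, the constants bookkeep as $|X_1| + |X_2| + 2 = n + 2$, and one point of $+1$ is absorbed by the $-1$ above, yielding exactly $2g(\alpha_1)+2g(\alpha_2) = n + 1 - z(\alpha) - z(\alpha^{-1}\zeta_n) = 2g(\alpha)$. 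Dividing by $2$ gives $g(\alpha) = g(\alpha_1) + g(\alpha_2)$.

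The main obstacle I anticipate is the bookkeeping in the middle step: correctly identifying the gluing transposition $\tau$ and verifying rigorously that passing from the product $(\alpha_1^{-1}\zeta_{X_1})(\alpha_2^{-1}\zeta_{X_2})$ to $\alpha^{-1}\zeta_n$ genuinely \emph{merges} (rather than splits) two cycles. The merge direction is forced because in the product the sets $X_1$ and $X_2$ are unions of distinct cycles — no cycle of the product meets both — while $\zeta_n$ traverses all of $\{1,\dots,n\}$ in one cycle and thus links the blocks; but making this airtight requires either a careful direct trace of the relevant orbit or an appeal to connectivity/transitivity, and getting the direction of the $\pm 1$ right is precisely the place where a sign error would silently wreck the final identity. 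Everything else is routine substitution into Definition~\ref{def:permgenus}.
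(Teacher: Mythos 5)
Your proposal is correct and takes essentially the same route as the paper: both proofs hinge on the gluing transposition $\tau=(a,\alpha(a)-1)$ (equivalently the identity $\zeta_n=\zeta_{X_1}\zeta_{X_2}\tau$), on Lemma~\ref{lemma:transpositions} to account for the $\pm 1$ in cycle counts, and on the additivity $z(\alpha)=z(\alpha_1)+z(\alpha_2)$ before summing the genus formulas for $\alpha_1$ and $\alpha_2$. The only cosmetic difference is the direction of the transposition step -- the paper multiplies $\alpha^{-1}\zeta_n$ by $\tau$ and uses $\alpha^{-1}\zeta_n(\alpha(a)-1)=a$ to see a cycle split, whereas you multiply $(\alpha_1^{-1}\zeta_{X_1})(\alpha_2^{-1}\zeta_{X_2})$ by $\tau$ and observe that $a$ and $\alpha(a)-1$ lie in different cycles (the product preserves the blocks), so they merge, which already resolves the sign concern you flagged.
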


\begin{proof}
Let $n_1$ be the number of elements of $X_1$ and $n_2$ be that of $X_2$. Consider the transposition 
$\tau$ exchanging $a$ and $c=\alpha(a) -1$, then $\zeta_n \tau$ has two
cycles of lengths $n_1$ and $n_2$ respectively, permuting the elements
of $X_1$ and $X_2$ respectively.  Since $\alpha^{-1}\zeta_n(c) =a$, we have:
$$ z(\alpha^{-1}\zeta_n\tau) =  z(\alpha^{-1}\zeta_n) +1$$
Moreover $$ z(\alpha) = z(\alpha_1)  + z(\alpha_2) \hskip 0.2cm  \makebox{\rm and}   \hskip 0.2cm z(\alpha^{-1}\zeta_n\tau) = 
z(\alpha_1^{-1}\zeta_{n_1}) + z(\alpha_2^{-1}\zeta_{n_2})$$
where $\zeta_{n_1}=(a+1,a+2,\ldots,\alpha(a)-1)$ and $\zeta_{n_2}$ is
the analogous circular permutation on $X_2$. Computing the genus of
$\alpha_1$ and $\alpha_2$ we get: 
$$ 2g(\alpha_1) = n_1 - z(\alpha_1) - z(\alpha_1^{-1}\zeta_{n_1}) -1  \hskip 0.2cm  \makebox{\rm and}  \hskip 0.2cm 
2g(\alpha_2) = n_2- z(\alpha_2) - z(\alpha_2^{-1}\zeta_{n_2}) -1$$
Adding the two equations and using the preceding relations we get:
$$2(g(\alpha_1) ) + g(\alpha_2) ) = n_1 + n_2 - z(\alpha) - z(\alpha^{-1}\zeta_n\tau) -2$$
Since $n_1 + n_2 = n$ and $z(\alpha^{-1}\zeta_n\tau) = z(\alpha^{-1}\zeta_n) +1$ we obtain the expected 
relation between the genuses of $\alpha, \alpha_1, \alpha_2$. 
\end{proof}

\begin{proposition}
\label{prop:cancs} 
Every reduced permutation of genus $1$ of $n$ elements has a unique
canonical sequence $(a,b,c,d)$ of separating points, that induces a
four-colored noncrossing partition representation. 
\end{proposition}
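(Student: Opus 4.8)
The plan is to verify, in order, the three assertions packed into the statement: (i) the canonical sequence $(a,b,c,d)$ of Definition~\ref{def:cancs} is well-defined, (ii) it satisfies the inequality $a<b\le c<d$ and $\theta=\zeta_n(a,c)(b,d)$ makes $g(\theta,\alpha)=0$, so that it is genuinely a sequence of separating points, and (iii) uniqueness — that no other sequence of separating points of $\alpha$ arises from the data prescribed by Definition~\ref{def:cancs}. For well-definedness, note first that $\alpha$ reduced forces $\alpha\neq\zeta_n$ (otherwise $\alpha$ would be one big trivial cycle), so the integer $a$ in clause (1) exists; since $\alpha$ is reduced it also cannot fix $a$ alone as a trivial $1$-cycle, and more to the point $\alpha(a)\neq a$, so $c=\alpha(a)-1\ge a+1>a$, giving $a<c$. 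The existence of $b$ in clause (2) requires an argument: I would show that $b=a$ is impossible by definition ($b>a$ is demanded) and that the condition ``$\alpha(b)>\alpha(a)$ or $\alpha(b)\le a$'' must be met by \emph{some} $b>a$ — indeed if it failed for every $b\in\{a+1,\dots,n\}$ then $\alpha$ would map the whole block $\{a,a+1,\dots,n\}$ (together with whatever lies below $a$, which $\alpha$ shifts by $+1$) into $\{a+1,\dots,\alpha(a)\}$ in an order-respecting way, forcing a trivial cycle via Lemma~\ref{lemma:planarPartition} or a direct counting/pigeonhole contradiction with $\alpha$ being a bijection of genus $1$. Once $b$ exists, clause (4) defines $d$, and I must check $b\le c<d$: the case split on whether $\alpha(b)>\alpha(a)$ or $\alpha(b)\le a$ gives respectively $d=\alpha(b)-1\ge\alpha(a)=c+1>c$, or (when $\alpha(b)\le a$, so in particular $\alpha(b)\neq 1$ only if $a\ge 1$, and one handles $\alpha(b)=1$ by $d=n$) one gets $d\ge$ the relevant bound; and $b\le c$ follows because all of $a+1,\dots,c=\alpha(a)-1$ are mapped by $\alpha$ into the block $\{a+1,\dots,\alpha(a)-1\}$ below $\alpha(a)$ and above $a$ — precisely the indices that \emph{fail} the defining condition of $b$ — so the first index passing that test is $>c$ only if... here I need to be careful: actually $b\le c$ should come from the observation that the block $\{a+1,\dots,c\}$ would otherwise be a union of cycles of $\alpha$, and then Lemma~\ref{lem:split} together with reducedness (a trivial subcycle) yields a contradiction unless $b\le c$.

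Having established that $(a,b,c,d)$ satisfies $a<b\le c<d$, I then verify it is a sequence of separating points, i.e.\ $g(\theta,\alpha)=0$ for $\theta=\zeta_n(a,c)(b,d)$. By Definition~\ref{def:sepp} and the genus formula, this is equivalent to $z(\alpha^{-1}\theta)=z(\alpha^{-1}\zeta_n)+2$, and since $\alpha^{-1}\theta=\alpha^{-1}\zeta_n(a,c)(b,d)$ is obtained from $\alpha^{-1}\zeta_n$ by right-multiplying by two transpositions, Lemma~\ref{lemma:transpositions} reduces everything to showing each multiplication \emph{splits} a cycle, i.e.\ the two points exchanged lie in a common cycle at each stage. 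For the first transposition $(a,c)$: compute $\alpha^{-1}\zeta_n(c)=\alpha^{-1}(c+1)=\alpha^{-1}(\alpha(a))=a$, so $c$ and $a$ sit in the same cycle of $\alpha^{-1}\zeta_n$ (they are adjacent along it), and the multiplication splits. For the second transposition $(b,d)$ acting on $\alpha^{-1}\zeta_n(a,c)$: I would trace through the two cases of clause (2) — when $\alpha(b)>\alpha(a)$ one checks $\alpha^{-1}\zeta_n(d)=\alpha^{-1}(\alpha(b))=b$ directly; when $\alpha(b)\le a$ (or $\alpha(b)=1$, $d=n$) a similar identity $(\alpha^{-1}\zeta_n)(d)=b$ holds — and then confirm that after the first split, $b$ and $d$ still lie on a common cycle of $\alpha^{-1}\zeta_n(a,c)$, which is where the precise minimality in the definition of $b$ (it is the \emph{first} index escaping the block) does the work. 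This is essentially the same bookkeeping already carried out in the two cases of the proof of Theorem~\ref{theorem:existColor}, so I would phrase it by reduction to that argument after checking that the canonical $(a,b,c,d)$ matches the $(a,b,c,d)$ (resp.\ $(a,b,b,d)$) extracted there from a crossing or twisted cycle of $\alpha^{-1}\zeta_n$.

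For the final clause — uniqueness — I read the statement as saying that the prescription in Definition~\ref{def:cancs} determines $(a,b,c,d)$ uniquely (each of $a,b,c,d$ is defined by a ``smallest such integer'' or an explicit formula, so uniqueness is immediate once existence is known) and that the induced four-colored noncrossing partition representation is therefore well-defined via Proposition~\ref{prop:coloringp}. So uniqueness requires nothing beyond remarking that Definition~\ref{def:cancs} is an unambiguous recipe and that, by the first two parts of this proof, the recipe succeeds. The only subtlety to flag is that ``sequence of separating points'' is \emph{not} unique for a general genus~$1$ permutation — a reduced $\alpha$ may have several — so the canonical one is merely a distinguished choice; the proposition does not claim it is the only separating sequence, only the only one obeying Definition~\ref{def:cancs}, and I would state that explicitly to avoid confusion.

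I expect the main obstacle to be part (i)–(ii), specifically proving that $b$ exists and that $b\le c$ while simultaneously showing the two transpositions both split cycles. The cleanest route is probably to decompose $\alpha$ using Lemma~\ref{lem:split} at $a$ (since $\{a+1,\dots,\alpha(a)-1\}$ turns out to be a union of cycles — this itself needs the definition of $b$ and reducedness) so that the genus-one budget is concentrated on the part containing $a,b,d$, reducing the verification of $g(\theta,\alpha)=0$ to a genus-one permutation whose relevant cycle is exactly the crossing or twisted configuration handled in Theorem~\ref{theorem:existColor}. If the split argument becomes circular (needing $b$ to define the block, needing the block to locate $b$), the fallback is a direct induction on $n$ or a careful excedance count via Lemma~\ref{lemma:nbBackPoints}: a reduced genus-one $\alpha$ has $\alpha^{-1}\zeta_n$ with exactly two back points, and the positions of those two back points along $\alpha^{-1}\zeta_n$ pin down $b$ and $d$, matching Definition~\ref{def:cancs}. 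Everything else is routine permutation algebra.
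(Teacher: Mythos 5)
Your plan is essentially the paper's: minimality in Definition~\ref{def:cancs} gives uniqueness, Lemma~\ref{lem:split} together with reducedness gives $b\le c$ (the block $\{a+1,\dots,\alpha(a)-1\}$ would otherwise be a union of cycles), and the separating property of $\theta=\zeta_n(a,c)(b,d)$ is checked by applying Lemma~\ref{lemma:transpositions} twice, starting from the identities $\alpha^{-1}\zeta_n(c)=a$ and $\alpha^{-1}\zeta_n(d)=b$, after which Proposition~\ref{prop:coloringp} supplies the induced representation. However, your handling of the second transposition is the one genuine weak spot. The minimality of $b$ plays no role in showing that $b$ and $d$ still lie on a common cycle of $\alpha^{-1}\zeta_n(a,c)$, and the proposed reduction to the proof of Theorem~\ref{theorem:existColor} does not cover all configurations: the canonical $(a,b,c,d)$ need not come from two crossing cycles of $\alpha^{-1}\zeta_n$, nor from a twisted cycle in the shape used there (which produces $b=c$); it can happen that $a,b,c,d$ all lie on one cycle of $\alpha^{-1}\zeta_n$ while $b<c$, a case handled by neither branch of that earlier proof. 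The correct (and the paper's) argument is direct: since $\alpha^{-1}\zeta_n(c)=a$, right multiplication by $(a,c)$ merely detaches $a$ from its cycle as a fixed point, and since $d\notin\{a,c\}$ (as $d>c>a$) the relation $\alpha^{-1}\zeta_n(a,c)(d)=\alpha^{-1}\zeta_n(d)=b$ persists, so $b$ and $d$ still share a cycle; hence $z(\alpha^{-1}\theta)=z(\alpha^{-1}\zeta_n)+2$ and $g(\theta,\alpha)=0$.

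Two smaller repairs: your fallback via Lemma~\ref{lemma:nbBackPoints} misquotes it -- the lemma says the back points of $\alpha$ and of $\alpha^{-1}\zeta_n$ together number $2g(\alpha)=2$, so $\alpha^{-1}\zeta_n$ has exactly two back points only when $\alpha$ is a partition, not for every reduced genus one permutation. Also, to get $c<d$ (and to see why $d=n$ is the right convention) you must note that $\alpha(b)\le a$ forces $\alpha(b)=1$, because the values $2,\dots,a$ are already the images of $1,\dots,a-1$; your parenthetical on this point does not establish it. Existence of $b$ is cleaner the paper's way: $\alpha^{-1}(1)>a$, so $j=\alpha^{-1}(1)$ already satisfies $\alpha(j)\le a$, with no pigeonhole needed. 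With these adjustments your argument coincides with the paper's proof.
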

\begin{proof}

It is easy to see that an  element $a$ as defined above exists since 
if $\alpha(i) = i+1$ for all $i < n$ then $\alpha$ is of genus $0$.
An element $b>a$ such that $\alpha(b) >  \alpha(a)$ or $\alpha(b) \leq a$
exists also since there is at least an element $j > a$ such that
$\alpha(j) = 1$. The minimality requirement stated in
conditions (1)  and (2) guarantees the uniqueness of $a$ and $b$.
Afterward, $c$ and $d$ are given by 
modulo $n$ subtractions that can be performed in exactly one way. 
It remains to show that $(a,b,c,d)$ is a sequence of separating
points. To show that $ a < b  \leq c < d$ holds, notice that if for all
$i$ such that $a < i < \alpha(a)$  we have $\alpha(i) < \alpha(a) $ 
then one of $\alpha_1$ or $\alpha_2$ given in Lemma~\ref{lem:split}
will have genus $0$ and hence contain a trivial cycle, contradicting the
fact that $\alpha$ is reduced. To show that
$g(\zeta_n(a,c)(b,d),\alpha)=0$, observe first that $a=\alpha^{-1}\zeta_n(c)$
and $c$ belong to the same cycle of $\alpha^{-1}\zeta_n$, similarly 
 $b=\alpha^{-1}\zeta_n(d)$ and $d$ belong to the same cycle of
$\alpha^{-1}\zeta_n$. Moreover, by $a=\alpha^{-1}\zeta_n(c)$, the cycle
decomposition of $\alpha^{-1}\zeta_n(a,c)$ is obtained by deleting $a$
from the cycle of $\alpha^{-1}\zeta_n$ containing it and turning it into a
fixed point. Thus $b$ and $d$ are also on the same cycle of
$\alpha^{-1}\zeta_n(a,c)$. Using Lemma~\ref{lemma:transpositions} twice
we obtain that
$z(\alpha^{-1}\zeta_n(a,c)(b,d))=z(\alpha^{-1}\zeta_n)+2$.  
\end{proof}

\begin{proposition}
\label{prop:existCanonical}
Let $\alpha=\Phi(\beta,\gamma)$ be the representation  of the reduced
permutation  $\alpha$ of genus $1$ induced by its canonical sequence of
separating points $(a,b,c,d)$. This representation has the following
properties:  

\begin{enumerate}
\item $a < b \leq c < d$ and $\alpha(a) \equiv c+1, \alpha(b) \equiv d+1
  \mod \  n$. 
\item If $x$ and $\alpha(x)$ are in  the same subset $A, B, C,$ or  $D$ then
  $\alpha(x) \equiv x+1 ({\rm mod}\  n)$.
\item There is no cycle of $\alpha$ containing elements in both $A $ and $D$ except the one
containing $b$ and $d+1$.
\item There is no cycle of $\alpha$ containing elements in both $B$ and
  $D$ except if this cycle is twisted and contains $b \in D, d+1 \in A$
  and an element $x \in B$.   
\end{enumerate}
\end{proposition}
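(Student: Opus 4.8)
The plan is to handle the four assertions in the stated order; parts (1) and (2) carry the real content, while (3) and (4) will be a (somewhat laborious) consequence of (2) together with the crossing bookkeeping of Remark~\ref{rem:colors} and the genus one classification in Proposition~\ref{prop:4types}.

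Part~(1) is immediate: by Proposition~\ref{prop:cancs} the canonical sequence $(a,b,c,d)$ is in particular a sequence of separating points, so $a<b\le c<d$ holds by Definition~\ref{def:sepp}, and the two congruences $\alpha(a)\equiv c+1$, $\alpha(b)\equiv d+1\pmod n$ are exactly clauses (3) and (4) of Definition~\ref{def:cancs} (when $\alpha(b)=1$ one has $d=n$, so $d+1\equiv 1\equiv\alpha(b)\pmod n$).

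For part~(2) I would apply Lemma~\ref{lemma:planarPartition} to the hypermap $(\theta,\alpha)$, where $\theta=\zeta_n(a,c)(b,d)$; this is legitimate because $g(\theta,\alpha)=0$ and, by (\ref{eq:theta}), $\theta$ is a single $n$-cycle. First I would record two structural facts read off from (\ref{eq:theta}) and (\ref{eq:newABCD}): each colour set $A,B,C,D$ is an interval in the cyclic order of $\zeta_n$ and also in the cyclic order of $\theta$; and inside a colour set both $\zeta_n$ and $\theta$ act as $x'\mapsto x'+1\bmod n$ except at the last element of that set, these last elements being precisely $a,d,c,b$ for $A,B,C,D$. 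I would also note that a reduced permutation has no fixed point, a fixed point being a trivial cycle. Now suppose $x$ and $\alpha(x)$ have the same colour $X$ and let $k\ge 1$ be minimal with $\alpha(x)=\theta^{k}(x)$; then $1\le k\le n-1$. If the segment $x,\theta(x),\dots,\theta^{k}(x)=\alpha(x)$ stays inside $X$, it does not pass through the exceptional element of $X$ before its last term (that step would leave $X$), so every one of its $k$ steps acts as $x'\mapsto x'+1\bmod n$; hence $\theta(x),\dots,\theta^{k-1}(x)$ are $\zeta_n$-consecutive, and if $k\ge 2$ then Lemma~\ref{lemma:planarPartition} produces a cycle of $\alpha$ on $\zeta_n$-consecutive points, i.e.\ a trivial cycle, contradicting reducedness; so $k=1$ and $\alpha(x)=\theta(x)=x+1\bmod n$. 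It then remains to exclude the wrap-around possibility that the segment leaves $X$ (so $\alpha(x)$ precedes $x$ inside $X$ in the $\theta$-order and the segment traverses all three other colours). When $X=A$ and $x\le a$ this is impossible because the canonical choice forces $\alpha(x)=x+1\le a$ for $x<a$ and $\alpha(a)=c+1\notin A$ by part~(1); the remaining sub-cases I would rule out by combining the canonical minimality of $a$ and $b$ with the observation that a full traversal again exposes, via Lemma~\ref{lemma:planarPartition} (and, where convenient, Lemma~\ref{lem:split}), a trivial cycle inside one of the traversed colour intervals. This wrap-around analysis is the one genuinely delicate point of the proof.

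For parts~(3) and~(4) the plan is to read a cycle of $\alpha$ as a cyclic word: by part~(2) it splits into maximal $\zeta_n$-consecutive runs, each contained in a single colour, joined by colour-changing jumps, and by part~(1) the jump leaving $D$ at its last element $b$ lands on $d+1\in A$. Transporting through the conjugacy $\alpha=\Phi(\beta,\gamma)$, the cycle through $b$ corresponds to the part of the noncrossing partition $\beta$ straddling the seam between the adjacent arcs $D$ and $A$, containing $\ell\in D$ and $\ell+1\in A$ as consecutive elements. For~(3) I would show that any further cycle meeting both $A$ and $D$ would give a second part of $\beta$ straddling that seam with vertices in both arcs, nested around the first; restricting $\beta$ to the region it cuts off and using the ``trivial observation'' from the proof of Lemma~\ref{lemma:planarPartition}, together with the crossing rules of Remark~\ref{rem:colors} and the classification of Proposition~\ref{prop:4types}, one obtains either a trivial cycle or a configuration incompatible with genus one. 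Part~(4) is the same argument pushed one notch further: a cycle meeting both $B$ and $D$ corresponds to a part of $\beta$ meeting the two non-adjacent arcs $B$ and $D$, hence a three- or four-coloured part, hence a twisted cycle, so the genus one classification forces it to be exactly the exceptional cycle of~(3), now also carrying an element $x\in B$. The bookkeeping needed to make (3) and (4) airtight is the main remaining labour, but it is bookkeeping; the conceptual work is already done in (1) and (2), and the single real obstacle is the wrap-around case in the proof of (2).
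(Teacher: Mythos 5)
Your part (1) and the non-wrap-around half of part (2) coincide with the paper's argument (Lemma~\ref{lemma:planarPartition} applied to the hypermap $(\theta,\alpha)$, the colours being $\theta$-arcs on which $\theta$ acts by $+1$ except at their last elements), but the wrap-around case of (2) is left open, and the mechanism you point to would not close it: when the $\theta$-path from $x$ to $\alpha(x)$ leaves the colour $X$, there need not be any trivial cycle inside the traversed colour intervals --- those arcs can be covered entirely by bicoloured parts of $\beta$. What does close the case is a different observation: wrap-around means $\alpha(x)$ precedes $x$ inside $X$, so $u=\phi^{-1}(x)$ is the largest and $\beta(u)$ the smallest element of their common part of $\beta$, whence that whole part is contained in the single arc $X$; unicoloured parts are exactly what the paper excludes for reduced permutations in the paragraph following Lemma~\ref{lemma:planarPartition}, since they force a trivial part inside the same arc. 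You do dispose of the subcase $x\le a$ of colour $A$ by canonicity, but for the remaining subcases ($X\in\{B,C,D\}$, or $x,\alpha(x)\in\{d+1,\ldots,n\}$) canonicity gives no direct restriction, so what you yourself call the delicate point is a genuine gap, not bookkeeping.

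The more serious problem is your route to (3) and (4): you propose to deduce them from (2), Remark~\ref{rem:colors} and Proposition~\ref{prop:4types}, without ever invoking the minimality of $b$ (and $a$) in Definition~\ref{def:cancs}, and that cannot work. For example, $\alpha=(1,6)(2,5)(3,8)(4,7)\in\Sym{8}$ is reduced of genus one and is represented by the noncrossing partition $\{1,8\},\{2,7\},\{3,4\},\{5,6\}$ with colour arcs $\{1,2,3\}$, $\{4\}$, $\{5\}$, $\{6,7,8\}$; the induced separating points are $(a,b,c,d)=(3,6,7,8)$, the colour sets of \eqref{eq:newABCD} are $A=\{1,2,3\}$, $D=\{4,5,6\}$, $C=\{7\}$, $B=\{8\}$, properties (1) and (2) hold and there is no trivial cycle, yet both cycles $(1,6)$ and $(2,5)$ meet $A$ and $D$, so (3) fails for this (non-canonical) representation. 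Hence your dichotomy ``either a trivial cycle or a configuration incompatible with genus one'' is false, and any proof of (3) must use canonicity, as the paper's does: every $x\in D$ satisfies $x\le b$, and for $a<x<b$ the minimality of $b$ together with injectivity of $\alpha$ gives $a<\alpha(x)<c+1$, i.e.\ $\alpha(x)\in C\cup D$, so only $x=b$ can be mapped into $A$ (or into $A\cup B$). Your argument for (4) also contains an invalid inference: a part meeting the non-adjacent arcs $B$ and $D$ need not be three-coloured (a two-element part with one point in each is bicoloured and untwisted), so ``hence a twisted cycle'' does not follow; the paper instead takes the largest element $x$ of the cycle lying in $D$, uses that cycles of $\alpha$ are increasing in the $\theta$-order (after $D$ the colours $A$ and $B$ precede $C$) to get $\alpha(x)\in A\cup B$, concludes $x=b$ as in (3), and only then obtains the three colours and the twist.
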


\begin{proof}

(1) is a direct consequence of Definition~\ref{def:cancs} and
  Proposition~\ref{prop:cancs}. 

\medskip
(2) Comes from the fact that if $x$ and $\alpha(x)$ are in the same
color class $X$, and $\alpha(x) \not\equiv x+1$ then, by Lemma
\ref{lemma:planarPartition}, there is a trivial cycle of $\beta$  which
contains consecutive points in $X$, giving rise to a trivial cycle of $\alpha$
thus contradicting the fact that the permutation $\alpha$ is reduced.  

\medskip
To prove (3)  observe that if there is a cycle bicolored by $A$ and $D$
then there is an element $x$ of this cycle such that $x \in D$ and
$\alpha(x)$ in $A$. But all elements in $D$ are less than or equal to $b$, so
that  $x \neq b$ would  contradict the fact that $b$  was chosen as the
smallest such that $\alpha(b) \in A\cup B$. 

\medskip
For (4), if there is a cycle containing elements in $B$ and $D$ this
implies that there is an element $x$ in $D$ such that $\alpha(x) \in A \cup B$. As above $x = b$. And the cycle contains elements
in $A, B, D$ hence it is twisted.
\end{proof}

\begin{proposition}
Let $\alpha$ be a reduced permutation of genus $1$, represented as
$\alpha=\Phi(\beta,\gamma)$ by a four-colored noncrossing partition.
If this representation satisfies the properties stated in
Proposition~\ref{prop:existCanonical} then it is the representation
induced by the canonical sequence of separating points. 
\end{proposition}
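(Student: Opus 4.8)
\emph{Approach and reduction.} The plan is to show that the sequence of separating points $(a,b,c,d)$ underlying the given representation is precisely the canonical sequence of Definition~\ref{def:cancs}. Since a sequence of separating points determines the representation it induces (the construction in Proposition~\ref{prop:coloringp} is deterministic once one normalizes $\phi(1)=1$, and $(P,\gamma)$ is recovered from $\alpha$ and $\gamma$), and since the canonical representation is by definition the one induced by the canonical sequence, it is enough to check that $(a,b,c,d)$ satisfies conditions (1)--(4) of Definition~\ref{def:cancs}. Conditions (3) and (4) are immediate from property (1) of Proposition~\ref{prop:existCanonical}: there $\alpha(a)\equiv c+1$ and $\alpha(b)\equiv d+1\pmod n$ with $a<b\le c<d\le n$, so $1\le c\le n-1$ gives $c=\alpha(a)-1$ with no wraparound, and $d$ is recovered from $\alpha(b)$ exactly as prescribed, with $d=n$ precisely when $\alpha(b)=1$. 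The "value" parts of conditions (1) and (2) are free as well: $\alpha(a)=c+1>a+1$ since $c>a$, and $\alpha(b)=d+1>c+1=\alpha(a)$ when $d<n$, while $\alpha(b)=1\le a$ when $d=n$.

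\emph{What remains.} After the above, the proof reduces to the two minimality claims: (I) $\alpha(x)=x+1$ for every $x<a$, and (II) $a<\alpha(x)\le \alpha(a)$ for every $x$ with $a<x<b$. Claim (I) makes $a$ the smallest integer moved off the $+1$ pattern, i.e.\ condition (1); granted (I) and property (1), claim (II) shows that no $x$ with $a<x<b$ satisfies $\alpha(x)>\alpha(a)$ or $\alpha(x)\le a$, so $b$ is the smallest integer past $a$ with the property required in condition (2).

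\emph{Proving the two claims.} Here I would use that $(a,b,c,d)$ being a sequence of separating points means $g(\theta,\alpha)=0$ for the circular permutation $\theta=\zeta_n(a,c)(b,d)$, whose cycle is displayed explicitly in~(\ref{eq:theta}). For (I), suppose $x<a$ and $\alpha(x)\ne x+1$. Since $x$ and $x+1$ lie in the same colour arc $A$, property (2) forces $\alpha(x)\notin A$, hence $\alpha(x)\in\{a+1,\dots,d\}$; reading along~(\ref{eq:theta}) this gives $\alpha(x)=\theta^k(x)$ with $k\ge a-x+2$ (and $k\le n-1$ since $\alpha$ has no fixed point, being reduced), so $1<k<n$. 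By Lemma~\ref{lemma:planarPartition} there is a cycle of $\alpha$ consisting of $\theta$-consecutive elements among $\theta(x),\dots,\theta^{k-1}(x)$. When this cycle stays inside a single colour arc it is a set of consecutive integers, i.e.\ a trivial cycle of $\alpha$, contradicting that $\alpha$ is reduced. When it crosses a colour boundary I would invoke properties (3) and (4): a cycle of $\alpha$ meeting both $A$ and $D$ must be the one through $b$ and $d+1$, a cycle meeting both $B$ and $D$ must be the twisted cycle described in (4), and combining this with the fact that $\alpha$ has genus exactly one — so Proposition~\ref{prop:4types} caps the number of its twisted cycles — pins the offending cycle down enough to contradict either its location or reducedness. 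Claim (II) is argued in parallel, taking $x\in D=\{a+1,\dots,b\}$, tracing $\alpha(x)$ against~(\ref{eq:theta}) and~(\ref{eq:newABCD}), and using (2) inside $D$ together with (3)--(4) to forbid $\alpha(x)$ from escaping $\{a+1,\dots,\alpha(a)\}$.

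\emph{Main obstacle.} The delicate point is exactly the colour-boundary case in (I) and (II): the "trivial cycle contradicts reducedness" mechanism only bites when the short cycle produced by Lemma~\ref{lemma:planarPartition} lies entirely within one colour arc, and ruling out the boundary-crossing alternatives (chiefly a cycle straddling the end of the $A$-segment at $a$ and the start of the $B$-segment at $c+1$) is where properties (3) and (4) must be used in full force, in conjunction with the genus-one bound on the number of twisted cycles. Everything else is bookkeeping against the explicit forms~(\ref{eq:theta}) and~(\ref{eq:newABCD}).
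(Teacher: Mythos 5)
Your reduction is correct as far as it goes: since a sequence of separating points determines the representation it induces, it suffices to check that the sequence $(a,b,c,d)$ attached to the given representation satisfies conditions (1)--(4) of Definition~\ref{def:cancs}, and you rightly isolate the two minimality claims (I) and (II) as the only nontrivial content. But those two claims are exactly where the proposition lives, and your proposal does not prove them. The mechanism via Lemma~\ref{lemma:planarPartition} yields a contradiction only when the cycle it produces lies inside a single colour arc; for the straddling case you offer only that properties (3) and (4) together with Proposition~\ref{prop:4types} ``pin the offending cycle down enough to contradict either its location or reducedness,'' which is not an argument, and your closing paragraph concedes that this is the unresolved point. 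Moreover, the tools you name are not obviously the relevant ones: in claim (I) the $\theta$-consecutive stretch $\theta(x),\dots,\theta^{k-1}(x)$ runs through $\{x+1,\dots,a\}\subseteq A$ and then into $B=\{c+1,\dots,d\}$ (see (\ref{eq:theta})), so the dangerous straddling cycle is one containing both $a$ and $c+1=\alpha(a)$, i.e.\ a cycle crossing the $A$--$B$ boundary --- a configuration about which properties (3) and (4) (which concern $A$--$D$ and $B$--$D$ bicoloured cycles) say nothing, and which is a priori compatible with $\alpha(a)=c+1$. So the case you defer is genuinely open in your write-up, and it is the heart of the matter.

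For comparison, the paper does not attempt to establish minimality of the given sequence from scratch. It uses Proposition~\ref{prop:cancs} to have the canonical sequence $(a,b,c,d)$ already in hand and compares it with the sequence $(a',b',c',d')$ of the given representation: from the definitions it gets $a=a'$, $c=c'=\alpha(a)-1$ and $b\le b'$ directly, and the only substantive step is to rule out $b<b'$, which is done by observing that $b$ would then lie in $D'$ and running a short case analysis on the colour class of $\alpha(b)$ using properties (2)--(4); Lemma~\ref{lemma:planarPartition} is not used at all. If you wish to salvage your framework, the natural repair is to adopt this comparison strategy: you do not need your claims (I) and (II) in full generality, only the inequalities relating the given sequence to the canonical one, and those come much more cheaply than the structural analysis you left unfinished.
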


\begin{proof}
Let $(A,B,C,D)$ denote the
sequence of color sets of the coloring induced by the canonical sequence 
$(a,b,c,d)$ of separating points via (\ref{eq:newABCD}). Suppose that
there exists another representation induced by the the 
sequence of separating points $(a', b', c', d')$ satisfying the
properties stated in in Proposition~\ref{prop:existCanonical}, and let $(A',
B',C',D')$ denote the the sequence of sets of colors in the induced 
coloring. Then $a= a'$ since both are the smallest $x$ such
that $\alpha(x) \neq x+1$, this gives also $c = c' = \alpha(a) -1$. 
Observe next that  $b \leq b'$ since $\alpha(b') \equiv d'+1 \mod n$ satisfies
$ \alpha(b')  = 1 $ or  $\alpha(b') > \alpha(a)$, thus it is not in the
interval $[a+1,\alpha(a)]$ and $b$ is the smallest integer with this
property. It suffices to show that $b$ can not be strictly less than
$b'$, afterward $d=d'$ follows from the fact that both are congruent to
$\alpha(b)-1$ modulo $n$.

Assume, by way of contradiction, that $b<b'$. 
As a consequence of $a<b<b'$, we must have $b \in D'$ 
since all the elements  in $B'$ and $C'$ are greater than $b'$ and those
in $A'\cap [1,b']$ are less than $a$ hence satisfy $\alpha(x) = x+1$. 
By property (3) in Proposition~\ref{prop:existCanonical} we can
not have $\alpha(b)\in D'$ and by property (4) we can not have
$\alpha(b)\in A'$ either. If $\alpha(b)\in D'$ then, by property (2), we
must have $d+1=\alpha(b)=b+1$, in contradiction with $b<d$. Finally, if
$\alpha(b)\in C'=[b'+1,c']\subseteq [a'+1, \alpha(a')-1]$ then
$\alpha(b)$ is not outside the interval $[a+1,\alpha(a)]$, in
contradiction with the definition of a canonical sequence of separating points. 
\end{proof}

\begin{corollary}
\label{cor:canonicalc}
The canonical four-colored noncrossing partition representation of a reduced
permutation $\alpha$ of genus $1$ may be equivalently defined by
requiring that the sequence of separating points inducing it must
satisfy the four conditions of Proposition~\ref{prop:existCanonical}. 
\end{corollary}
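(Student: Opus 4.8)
The plan is to observe that the corollary is an immediate consequence of the two preceding propositions, and to spell out why the two descriptions of the canonical representation coincide. First I would recall that Proposition~\ref{prop:cancs} guarantees that every reduced permutation $\alpha$ of genus $1$ has a (unique) canonical sequence of separating points $(a,b,c,d)$, and that this sequence does induce a four-colored noncrossing partition representation $\alpha=\Phi(\beta,\gamma)$ in the sense of Proposition~\ref{prop:coloringp}; call this representation $\mathcal{R}$. By Proposition~\ref{prop:existCanonical}, the sequence $(a,b,c,d)$ satisfies the four listed conditions, so $\mathcal{R}$ is a representation whose inducing sequence of separating points satisfies those conditions. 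Hence such a representation exists.

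Next I would argue uniqueness. Suppose $\alpha=\Phi(\beta',\gamma')$ is any four-colored noncrossing partition representation of $\alpha$ whose inducing sequence of separating points $(a',b',c',d')$ satisfies the four conditions of Proposition~\ref{prop:existCanonical}. Then the hypotheses of the Proposition immediately preceding this corollary are met, so that representation is the one induced by the canonical sequence of separating points; i.e., it equals $\mathcal{R}$. Combining this with the existence statement of the previous paragraph, the set of four-colored noncrossing partition representations of $\alpha$ whose inducing sequence of separating points satisfies the four conditions consists of exactly one element, namely the canonical representation as defined in Definition~\ref{def:cancs}. Therefore the condition ``the sequence of separating points inducing the representation satisfies the four conditions of Proposition~\ref{prop:existCanonical}'' singles out the same representation as Definition~\ref{def:cancs}, which is precisely the assertion of the corollary.

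I do not expect any genuine obstacle here: all of the content has already been established in Propositions~\ref{prop:cancs} and~\ref{prop:existCanonical} and the unnamed proposition between them. The only mild subtlety to state carefully is that a four-colored noncrossing partition representation determines and is determined by its sequence of separating points (via Remark~\ref{rem:colsep} together with Proposition~\ref{prop:coloringp}), so that ``the representation satisfies the conditions'' and ``the inducing sequence of separating points satisfies the conditions'' are interchangeable phrasings; once this bookkeeping is made explicit, the corollary follows formally.
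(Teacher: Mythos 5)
Your argument is correct and is exactly the intended one: the paper gives no separate proof for this corollary because it follows immediately by combining Proposition~\ref{prop:cancs} and Proposition~\ref{prop:existCanonical} (existence of a representation satisfying the four conditions) with the unnamed proposition just before the corollary (any representation satisfying them is the canonical one). Your spelled-out version, including the remark that a representation is determined by its sequence of separating points, matches the paper's reasoning.
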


\section{Counting reduced partitions and permutations}
\label{sec:enured}

\subsection{Counting reduced partitions of genus $1$}

\begin{lemma}
\label{lem:bijectionMatching}
A reduced partition of genus $1$ having $k$ parts is determined by a
subset  of $2k$  integers in $\{1,2,\ldots, n\}$
and a sequence of four non-negative integers whose sum is $k-2$. 
\end{lemma}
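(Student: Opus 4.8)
The idea is to use the canonical four-colored noncrossing partition representation of a reduced genus $1$ partition $\alpha$, as developed in Section~\ref{sec:redpp}, and to read off from it exactly the data claimed: a $2k$-element subset of $\{1,\dots,n\}$ together with a composition of $k-2$ into four parts. By Corollary~\ref{c:genus1part}, the canonical representation $(\beta,\gamma)$ of $\alpha$ has no three- or four-colored part, and it has at least two two-colored parts; by the discussion following Lemma~\ref{lemma:planarPartition}, it has no unicolored parts at all. Hence every part of $\beta$ is two-colored, and its points fall into exactly two of the four consecutive arcs $A,B,C,D$; since $\beta$ is noncrossing and its parts are two-colored, the structure inside each arc is a noncrossing matching (each part contributes exactly two points to the arc, one in each of the two colors it spans, by property (2) of Proposition~\ref{prop:existCanonical}, which forces a part staying in one arc to be trivial).

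\textbf{First step: extract the $2k$ points.} I would argue that a two-colored part of $\beta$ uses exactly one point in each of its two arcs. Indeed, if a part had two points in the same arc $X$, then between them (in clockwise order within $X$) the noncrossing condition plus Lemma~\ref{lemma:planarPartition} would produce a part consisting of consecutive points, i.e.\ a trivial cycle, contradicting reducedness; alternatively property (2) of Proposition~\ref{prop:existCanonical} handles the case where the two same-arc points are $\beta$-adjacent, and induction on part size handles the rest. So each of the $k$ parts contributes exactly $2$ distinguished points, giving a set $S$ of $2k$ integers. Conversely, the arcs $A,B,C,D$ are determined by the canonical separating points $(a,b,c,d)$, which are themselves determined by $\alpha$ via Definition~\ref{def:cancs} — and one checks $a,c\in S$ (they are endpoints of the ``big'' crossing pair) while in fact the full coloring data is recoverable once we know which of the $2k$ points lie in each arc. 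The colors of the points of $S$ are determined by $S$ together with the composition data below, so $S$ alone is the subset we want.

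\textbf{Second step: the composition of $k-2$.} Within each arc $X$, the two-colored parts having both points in arcs $\neq X$ are irrelevant to $X$; the parts touching $X$ pair up points of $X$ with points of a neighboring arc, and the ``internal'' combinatorial freedom left after fixing $S$ is a noncrossing structure on the points of $X$. I would show that exactly two parts are forced — the two crossing cycles witnessing genus $1$, one spanning $A\cup C$-type behavior and one the $B\cup D$ behavior as in Remark~\ref{rem:3color} and the proof of Theorem~\ref{theorem:existColor}, which by property (1) of Proposition~\ref{prop:existCanonical} involve the points $a,c$ and $b,d{+}1$ respectively — and the remaining $k-2$ parts distribute among four ``regions'' (the four arcs, or equivalently the four gaps created by the two distinguished crossing chords) in a way recorded precisely by a length-$4$ sequence of non-negative integers summing to $k-2$. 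The noncrossing condition, once $S$ and the region-counts are fixed, then pins down the matching uniquely, because a noncrossing pairing of a linearly ordered set with prescribed ``which side'' data is unique.

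\textbf{Main obstacle.} The delicate point is the bookkeeping that makes the map $\alpha \mapsto (S, (k_A,k_B,k_C,k_D))$ a genuine determination — i.e.\ that the data really reconstructs $\alpha$ and nothing is lost or double-counted. Concretely, I must verify: (i) that knowing the $2k$-point set $S$ together with the four region-counts is enough to recover the arc boundaries $a,b,c,d$ and hence the coloring $\gamma$; (ii) that within each arc the noncrossing two-colored structure is then forced, so $\beta$ is recovered, and thus $\alpha=\Phi(\beta,\gamma)$; and (iii) that the two ``mandatory'' crossing parts are correctly accounted for so that the free count is exactly $k-2$ and not $k-1$ or $k-3$. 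Properties (1)--(4) of Proposition~\ref{prop:existCanonical} together with Proposition~\ref{prop:cancs} (uniqueness of the canonical sequence) are exactly the tools that close this gap; the rest is a careful but routine case analysis on where each arc's points can be matched.
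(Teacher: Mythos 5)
Your overall strategy is the same as the paper's (pass to the canonical four-colored representation, observe that every part is bicolored, and encode the partition by $2k$ distinguished points plus a composition of $k-2$ into four non-negative parts), but your first step contains a genuine error. You claim that a two-colored part uses exactly one point in each of its two arcs, i.e.\ that every part has exactly two elements; this would force $n=2k$ and is false in general. For example, $\alpha=(1,3,4)(2,5)$ is a reduced genus $1$ partition of $\{1,\ldots,5\}$ with $k=2$ parts; its canonical sequence of separating points is $(a,b,c,d)=(1,2,2,4)$, giving $A=\{5,1\}$, $D=\{2\}$, $C=\emptyset$, $B=\{3,4\}$, and the part $\{1,3,4\}$ has two points, $3$ and $4$, in the arc $B$. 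Your argument via Lemma~\ref{lemma:planarPartition} only excludes two same-arc points with a gap between them; when the two points are consecutive integers nothing can be squeezed between them, and property (2) of Proposition~\ref{prop:existCanonical} does not rule this case out --- it permits it, merely forcing $\alpha(x)=x+1$. The ``induction on part size'' cannot repair this, since such configurations genuinely occur, and with your definition of $S$ the $n-2k$ remaining points are unaccounted for, so the reconstruction in your second step does not get off the ground as stated.

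The correct choice of distinguished points, which is what the paper uses, is the set of the $2k$ points $x$ with $\alpha(x)\not\equiv x+1 \pmod n$: each bicolored part of the canonical representation contains exactly two such points, and its remaining elements form runs of consecutive integers inside an arc (this is the actual content of property (2) combined with Lemma~\ref{lemma:planarPartition}), so they carry no further information. With that corrected $S$, the rest of your bookkeeping is essentially the paper's: by Proposition~\ref{prop:existCanonical} the two forced parts are the $(A,B)$ part containing $a,c+1$ and the $(A,D)$ part containing $b,d+1$ (not ``$A\cup C$'' and ``$B\cup D$'' parts), there are no other $(A,D)$ parts and no $(B,D)$ parts, and the remaining $k-2$ parts are freely distributed among the types $(A,B)$, $(A,C)$, $(B,C)$, $(C,D)$, after which the noncrossing condition determines the partition uniquely.
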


\begin{figure}[h]
\includegraphics{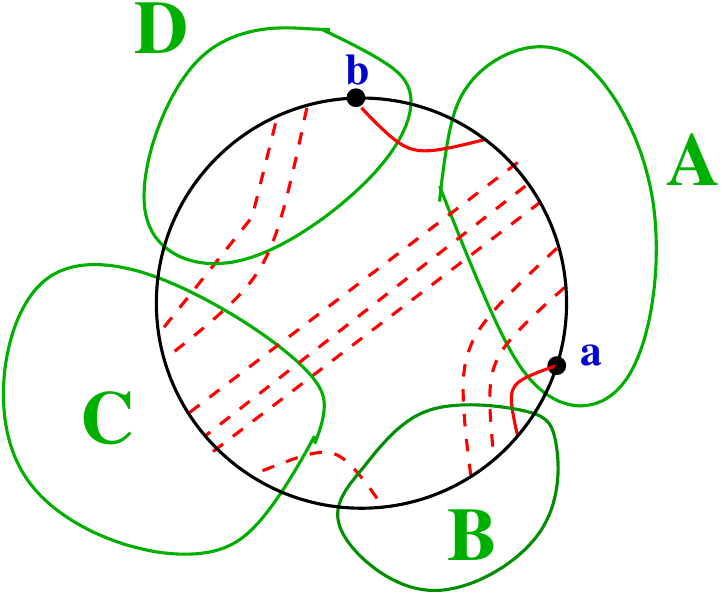}
\caption{A reduced partition}
\label{fig:reducedPart}
\end{figure}

\begin{proof}
By Corollary~\ref{c:genus1part} and as a consequence of Lemma~\ref{lemma:planarPartition}, in the canonical representation of a reduced
partition each part is bicolored and contains exactly  two points $x_i$
and $y_i$ such that $\alpha(x_i) \neq x_i+1$  and $\alpha(y_i) \neq y_i+1$. 
There is exactly one part bicolored by $A$ and $B$ that contains $a,
c+1$ and exactly one part bicolored $A, D$ that contains $b, d+1$. There
is no other part bicolored by $A,D$ and there is no part bicolored by $D,B$.
The partition is determined by the elements $x_i, y_i$ and  by the
numbers of the parts bicolored by $(A, B)$, $(A, C)$, $(B, C)$, or $(C,
D)$, respectively, see Figure~\ref{fig:reducedPart}.   

\end{proof}

\begin{theorem}
\label{thm:rnk}
The number $r_0(n,k)$ of reduced partitions of genus $1$, of the set
$\{1,\ldots,n\}$, having $k$ blocks is 
$$r_0(n,k) = \binom{n}{2k}\binom{k+1}{3}.$$
Moreover, the ordinary generating function of these partitions is given by
\begin{equation}
\label{eq:rpgf}
R_0(x,y)=\sum_{n,k \geq 0} r_0(n,k) x^ny^k\ = \ \frac{y^2x^4(1-x)^3}{\left((1-x)^2 - yx^2\right)^4}.   
\end{equation}
\end{theorem}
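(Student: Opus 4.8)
The plan is to build on Lemma~\ref{lem:bijectionMatching}, which identifies a reduced partition of genus~$1$ with $k$ parts with the data of a $2k$-element subset $S\subseteq\{1,\ldots,n\}$ together with a sequence of four nonnegative integers summing to $k-2$. First I would verify the closed form $r_0(n,k)=\binom{n}{2k}\binom{k+1}{3}$ directly from this bijection: the subset $S$ can be chosen in $\binom{n}{2k}$ ways, and the number of sequences of four nonnegative integers $(p_{AB},p_{AC},p_{BC},p_{CD})$ with $p_{AB}+p_{AC}+p_{BC}+p_{CD}=k-2$ is $\binom{(k-2)+3}{3}=\binom{k+1}{3}$ by the standard stars-and-bars count. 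One small point to check carefully is that the two distinguished parts (the one bicolored $A,B$ containing $a,c+1$ and the one bicolored $A,D$ containing $b,d+1$) are already accounted for separately, so that exactly $k-2$ parts remain to be distributed among the four allowed color-pair types; this is exactly what the statement of Lemma~\ref{lem:bijectionMatching} records, so the count is $\binom{n}{2k}\binom{k+1}{3}$ as claimed. I should also confirm the boundary behavior ($k\le 1$ forces the binomial $\binom{k+1}{3}$ to vanish, consistent with the fact that a genus~$1$ partition needs at least two crossing two-colored parts, hence $k\ge 2$).

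Next I would derive the generating function \eqref{eq:rpgf} from the formula $r_0(n,k)=\binom{n}{2k}\binom{k+1}{3}$. The strategy is to sum first over $n$ for fixed $k$, then over $k$. Summing over $n$ gives $\sum_{n\ge 0}\binom{n}{2k}x^n=\dfrac{x^{2k}}{(1-x)^{2k+1}}$, so
\begin{equation}
R_0(x,y)=\frac{1}{1-x}\sum_{k\ge 0}\binom{k+1}{3}y^k\left(\frac{x^2}{(1-x)^2}\right)^k.
\end{equation}
Writing $u=\dfrac{yx^2}{(1-x)^2}$, it remains to evaluate $\sum_{k\ge 0}\binom{k+1}{3}u^k$. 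Since $\binom{k+1}{3}$ is the coefficient extracting a shifted third-order pole, one has $\sum_{k\ge 0}\binom{k+1}{3}u^k=\dfrac{u^2}{(1-u)^4}$ (for instance by differentiating the geometric series three times, or by noting $\binom{k+1}{3}=\binom{(k-2)+3}{3}$ counts weak compositions of $k-2$ into $4$ parts, whose generating function is $u^2\cdot(1-u)^{-4}$ after accounting for the shift by $2$). Substituting back,
\begin{equation}
R_0(x,y)=\frac{1}{1-x}\cdot\frac{u^2}{(1-u)^4}=\frac{1}{1-x}\cdot\frac{\left(\frac{yx^2}{(1-x)^2}\right)^2}{\left(1-\frac{yx^2}{(1-x)^2}\right)^4},
\end{equation}
and clearing denominators (multiplying numerator and denominator of the last fraction by $(1-x)^8$) yields
\begin{equation}
R_0(x,y)=\frac{1}{1-x}\cdot\frac{y^2x^4(1-x)^4}{\left((1-x)^2-yx^2\right)^4}=\frac{y^2x^4(1-x)^3}{\left((1-x)^2-yx^2\right)^4},
\end{equation}
which is exactly \eqref{eq:rpgf}.

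The only genuine obstacle is making sure the bijective count in Lemma~\ref{lem:bijectionMatching} is being applied with the right bookkeeping — i.e.\ that the four free parameters are genuinely unconstrained nonnegative integers and that their sum is exactly $k-2$ (not $k$ or $k-1$), and that no choice of the $2k$-subset $S$ is excluded or double-counted. Once that is pinned down, everything else is a routine manipulation of binomial generating functions: the sum over $n$ is the standard negative-binomial series, and the sum over $k$ is the third derivative of the geometric series. I do not anticipate any difficulty in the algebra of clearing denominators, beyond keeping track of the powers of $(1-x)$.
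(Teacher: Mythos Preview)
Your proposal is correct and follows essentially the same route as the paper: both derive $r_0(n,k)=\binom{n}{2k}\binom{k+1}{3}$ directly from Lemma~\ref{lem:bijectionMatching} via stars-and-bars, then sum first over $n$ using $\sum_{n}\binom{n}{2k}x^n=x^{2k}/(1-x)^{2k+1}$ and finally over $k$ using $\sum_k \binom{k+1}{3}u^k=u^2/(1-u)^4$ with $u=yx^2/(1-x)^2$. The only cosmetic difference is that the paper factors the intermediate expression as $\frac{1-x}{yx^2}\sum_{k\ge 2}\binom{k+1}{3}u^{k+1}$ before invoking the same binomial-series identity, whereas you keep the exponent as $k$; the algebra is otherwise identical.
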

\begin{proof}
To obtain the first part, observe that there are $\binom{n}{2k}$ ways to
select the $2k$ integers and that the number $k-2$ may be written in 
$\binom{k+1}{3}$ ways as the sum of four non-negative integers.

\medskip

To obtain a formula for the generating function, we will use the
following variant of the binomial series formula for $(1-u)^{-m-1}$:
\begin{equation}
\label{eq:binomial}
\sum_{n=m}^{\infty} \binom{n}{m}
u^n=\frac{u^m}{(1-u)^{m+1}}\quad\mbox{holds for all $m\in {\mathbb N}$.} 
\end{equation}
Using this formula first for $u=x$ and $m=2k$ we obtain
\begin{eqnarray*}
R_0(x,y)&=&\displaystyle 
\sum_{k\geq 2}\binom{k+1}{3} y^k\sum_{n\geq 2k}\binom{n}{2k}x^n
=\sum_{k\geq 2}\binom{k+1}{3} y^k \frac{x^{2k}}{(1-x)^{2k+1}}\\
&=&
\frac{(1-x)}{yx^2}\sum_{k\geq 2} \binom{k+1}{3}
  \left(\frac{yx^2}{(1-x)^2}\right)^{k+1}. 
\\
\end{eqnarray*}
(The last part is a product of formal Laurent series.)
Substituting now $u=yx^2/(1-x)^2$ and $m=3$ into (\ref{eq:binomial})
  yields
$$
R_0(x,y)= \frac{(1-x)}{yx^2}\cdot 
\frac{\left(\frac{yx^2}{(1-x)^2}\right)^2}
{\left(1-\frac{yx^2}{(1-x)^2}\right)^4}. 
$$
Simplifying by the factors of $(1-x)$ yields the stated formula.
\end{proof}

Substituting $y=1$ in (\ref{eq:rpgf}) 
allows us to find the ordinary generating function of all reduced genus
one partitions of a given size, regardless of the number of blocks.

\begin{corollary}
\label{cor:rngf}
Let $r_0(n)$ be the number of all reduced genus $1$ partitions on
$\{1,\ldots,n\}$. Then the generating function $R_0(x) =\sum_{n\geq 4}
r_0(n) x^n$ is given by 
$$R_0(x)=x^4\frac{(1-x)^3}{(1-2x)^4}.$$
\end{corollary}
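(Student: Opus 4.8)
The plan is to substitute $y=1$ directly into the generating function formula~(\ref{eq:rpgf}) established in Theorem~\ref{thm:rnk} and then simplify. By definition, $R_0(x) = \sum_{n\geq 4} r_0(n)x^n = \sum_{n,k\geq 0} r_0(n,k)x^n = R_0(x,1)$, since summing over all $k$ of $r_0(n,k)$ gives exactly $r_0(n)$, the total count of reduced genus $1$ partitions on $n$ elements. The lower bound $n\geq 4$ is automatic because a reduced genus $1$ partition needs at least two crossing (hence bicolored) parts, forcing $k\geq 2$ and therefore $n\geq 2k\geq 4$; this is also visible from the fact that the power series on the right-hand side begins with $x^4$.

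First I would set $y=1$ in~(\ref{eq:rpgf}), obtaining
$$
R_0(x) = \frac{x^4(1-x)^3}{\left((1-x)^2 - x^2\right)^4}.
$$
Then I would simplify the inner expression $(1-x)^2 - x^2 = 1 - 2x + x^2 - x^2 = 1 - 2x$. Substituting this back yields
$$
R_0(x) = \frac{x^4(1-x)^3}{(1-2x)^4} = x^4\frac{(1-x)^3}{(1-2x)^4},
$$
which is exactly the claimed formula.

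I do not anticipate any real obstacle here: the only nontrivial point is the algebraic identity $(1-x)^2 - x^2 = 1-2x$, which is elementary. One could, if desired, add a sentence confirming that the substitution $y=1$ is legitimate at the level of formal power series, since for each fixed $n$ the sum $\sum_k r_0(n,k)$ is finite (indeed $k$ ranges only over $2\leq k\leq n/2$), so $R_0(x,1)$ is a well-defined power series in $x$ and equals the termwise evaluation of $R_0(x,y)$. This justifies reading off $R_0(x)$ from $R_0(x,y)$ without convergence concerns.
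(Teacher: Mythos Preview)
Your proposal is correct and follows exactly the paper's approach: the paper simply states that the corollary is obtained by substituting $y=1$ into~(\ref{eq:rpgf}), and your computation makes that substitution explicit with the elementary simplification $(1-x)^2-x^2=1-2x$.
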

As a consequence, the ordinary generating function of the sequence
$r_0(4),r_0(5),\ldots$ is $(1-x)^3/(1-2x)^4$. This sequence
is listed as sequence A049612 in the Encyclopedia of Integer
Sequences~\cite{OEIS}. It is noted in~\cite{OEIS} that the same numbers
appear as the third row of the array given as sequence
A049600. Essentially the same array is called  
the array of {\em asymmetric Delannoy numbers $\adn_{m,n}$}
in~\cite{Hetyei-dn} where they are defined as the number of lattice paths
from $(0,0)$ to $(m,n+1)$ having steps $(x,y)\in {\mathbb N}\times
{\mathbb P}$. (Here ${\mathbb P}$ denotes the set of positive integers.)
Using \cite[Lemma 3.2]{Hetyei-dn}, it is easy to show the
following formula:
\begin{equation}
\label{eq:rn}
r_0(n)=\adn_{3,n-4}=2^{n-4}+3\binom{n-4}{1} 2^{n-5}+3\binom{n-4}{2} 2^{n-6} 
+\binom{n-4}{3} 2^{n-7}.
\end{equation}

\subsection{Counting reduced permutations of genus $1$}

\begin{theorem}
\label{thm:rnkp}
The number of reduced permutations of genus $1$ of $\Sym{n}$ with $k$ cycles is 
equal to:
\[
r_*(n,k) = \binom{n+2}{2k+2}\binom{k+1}{3} + \binom{n+1}{2k+2}\binom{k+1}{2}.
\]
More precisely, for $j=0,1,2$, the number $r_j(n,k)$ of reduced
permutations of genus $1$ of $\Sym{n}$ with $j$ back points and $k$
cycles is given by the following formulas:
$$
r_0(n,k)=\binom{n}{2k}\binom{k+1}{3},\quad
r_2(n,k)=\binom{n}{2k+2}\binom{k+2}{3}\quad \mbox{and} 
$$
$$
r_1(n,k)=\binom{n}{2k+1}\left(\binom{k+2}{3}  +   \binom{k+1}{3}
\right).  
$$
\end{theorem}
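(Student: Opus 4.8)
The plan is to establish the three refined formulas for $r_j(n,k)$ separately, following the same template as the proof of Theorem~\ref{thm:rnk}, and then obtain $r_*(n,k) = r_0(n,k) + r_1(n,k) + r_2(n,k)$ by addition. By Proposition~\ref{prop:4types}, a reduced genus $1$ permutation has either no back point (case $j=0$, a partition), exactly one back point (case $j=1$, a unique simply twisted cycle), or exactly two back points (case $j=2$, either one doubly twisted cycle or two simply twisted cycles); the index $j$ therefore partitions the reduced genus $1$ permutations into three disjoint classes. The case $j=0$ is exactly Theorem~\ref{thm:rnk}, so nothing new is needed there.

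For the cases $j=1$ and $j=2$ I would set up a bijective encoding of the canonical four-colored noncrossing partition representation analogous to Lemma~\ref{lem:bijectionMatching}. By Corollary~\ref{cor:canonicalc} and Remark~\ref{rem:colors}, in the canonical representation of a reduced permutation every part is bi-, three-, or four-colored, and by Lemma~\ref{lemma:planarPartition} a part that is unicolored would force a trivial cycle, so no unicolored parts occur. Each part contributes one "breakpoint" pair except that the twisted part(s) contribute extra coloring-point data. Concretely, in case $j=1$ there is a unique three-colored part (the simply twisted cycle) and all other parts are two-colored; the three-colored part uses three of the four arcs and pins down one of two configurations, which should produce the factor $\binom{k+2}{3} + \binom{k+1}{3}$ — one term for each way the "empty arc" $C$ is or is not among the three arcs used by the twisted part. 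In case $j=2$ the two back points either lie on a common doubly twisted (four-colored) part or on two distinct simply twisted (three-colored) parts; I would argue that, once the canonical separating points are fixed, both sub-cases merge into a single count giving $\binom{n}{2k+2}\binom{k+2}{3}$. The number $2k$, $2k+1$, or $2k+2$ of integers to be selected from $\{1,\dots,n\}$ reflects how many of the $k$ cycles have one versus two "non-consecutive" images $\alpha(x)\neq x+1$: a non-twisted bicolored cycle contributes two, a simply twisted cycle contributes an extra one, a doubly twisted cycle an extra two.

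Having the combinatorial description, the generating-function bookkeeping is then the same mechanical step as in Theorem~\ref{thm:rnk}: write $r_j(n,k)$ as $\binom{n}{2k+j}$ times a polynomial in $k$, use the binomial series identity (\ref{eq:binomial}) with $u=x$, $m=2k+j$ to sum over $n$, then re-index and apply (\ref{eq:binomial}) again with $u = yx^2/(1-x)^2$. This recovers a formula of the shape $(1-x)^{3-j} x^{4+2\cdot(\text{something})} \cdot (\text{polynomial in } yx^2/(1-x)^2) / ((1-x)^2 - yx^2)^{4}$; I would not grind through this here since it is routine, but it is needed to state the closed forms promised in later theorems.

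The main obstacle I anticipate is the careful case analysis in $j=2$: showing that the doubly twisted configuration and the two-simply-twisted configuration, which look geometrically different, are counted by a single clean formula $\binom{n}{2k+2}\binom{k+2}{3}$. The delicate point is verifying that the canonical sequence of separating points from Definition~\ref{def:cancs} and Proposition~\ref{prop:cancs} handles both sub-cases uniformly — in particular that the constraints of Proposition~\ref{prop:existCanonical}, parts (3) and (4), correctly forbid spurious cycles crossing between arcs $A,D$ and $B,D$ — so that the free data reduces exactly to choosing $2k+2$ points and distributing $k-1$ among the relevant arcs, i.e.\ $\binom{k+2}{3}$ choices. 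I would also double-check the $j=1$ "off by one arc" dichotomy, since the appearance of the sum $\binom{k+2}{3}+\binom{k+1}{3}$ suggests the three-colored part either includes or excludes the (possibly empty) arc $C$, and I would want to confirm that both possibilities genuinely occur for reduced permutations rather than collapsing by Remark~\ref{rem:3color}.
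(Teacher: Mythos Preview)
Your overall strategy---classify by number of back points via Proposition~\ref{prop:4types}, use the canonical four-colored representation, count breakpoints $x$ with $\alpha(x)\neq x+1$, and distribute bicolored parts among the admissible arc-pairs---is exactly the paper's approach. The gap is in the sub-case structure you guess for $j=1$ and $j=2$, which does not match what the canonical representation actually forces.

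For $j=2$, the two geometric sub-cases do \emph{not} merge into a uniform count. A doubly twisted (four-colored) cycle contributes $4$ breakpoints and the $k-1$ remaining bicolored cycles are restricted to three arc-pairs (namely $(A,B)$, $(B,C)$, $(C,D)$), giving $\binom{n}{2k+2}\binom{k+1}{2}$. Two simply twisted cycles contribute $3+3=6$ breakpoints---but wait, that overcounts; in fact together with the $k-2$ bicolored cycles you still get $2k+2$ breakpoints, and now four arc-pairs are available, giving $\binom{n}{2k+2}\binom{k+1}{3}$. The formula $\binom{k+2}{3}$ arises only after adding these and applying Pascal. So your hope that the canonical constraints make the two sub-cases ``reduce exactly to \ldots\ $\binom{k+2}{3}$ choices'' uniformly is not how it works.

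For $j=1$, the dichotomy is not ``the three-colored part uses or avoids $C$''; rather there are \emph{three} sub-cases according to which three of the four color classes the twisted cycle meets: $\{A,B,C\}$, $\{A,C,D\}$, or $\{A,B,D\}$ (the set $\{B,C,D\}$ is ruled out by the canonical conditions, since $a\in A$ always starts a multicolored part). The first two each leave four arc-pairs available for the $k-1$ bicolored cycles with one arc-pair already forced, giving $\binom{k+1}{3}$; the third leaves only three arc-pairs, giving $\binom{k+1}{2}$. Summing $2\binom{k+1}{3}+\binom{k+1}{2}$ and applying Pascal yields the stated $\binom{k+2}{3}+\binom{k+1}{3}$. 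Your instinct that the split has to do with the role of $C$ is not wrong, but the actual bookkeeping is by color-triple, not by a binary inclusion/exclusion.
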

\begin{proof}
We count the four types of permutations listed in
Proposition~\ref{prop:4types},  in similar manner as we counted the
partitions of genus $1$.

\begin{enumerate}

\item The reduced permutations with no twisted cycles. These correspond
 to the partitions, their number is given in Theorem~\ref{thm:rnk}.

\item The reduced permutations with two back points. These may belong to
  the same doubly twisted cycle, or on two separate simply twisted cycles.
Let us count first the permutations with one doubly twisted cycle.

\begin{figure}[h]
\includegraphics{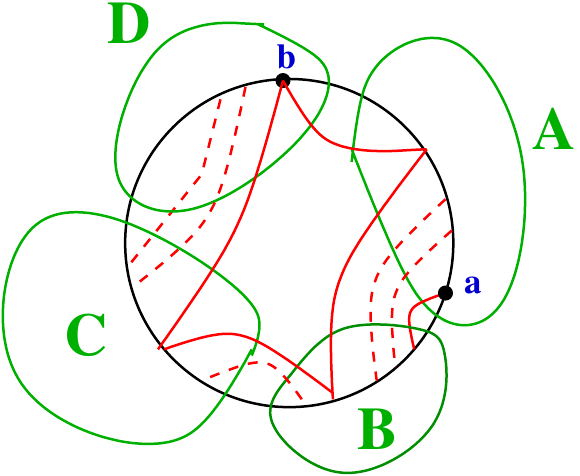}
\caption{Reduced permutation with one doubly twisted cycle}
\label{fig:reducedPerms1}
\end{figure}

The general shape of such a permutation is represented in
Figure~\ref{fig:reducedPerms1}. 
Note that the number of points $i$ such that $\alpha(i) \neq i+1$ is $4$
for the doubly 
twisted cycle and $2$ for each of the $k-1$ non-twisted cycles, giving a
total number of $2k+2$ cycles.  Moreover knowing these points the 
permutation is completely determined by the number of bicolored cycles having points in $(A,B), (B,C)
(C,D)$ so that a sequence of three non-negative integers with sum equal to $k-1$. Since the number 
of such sequences is $\binom{k+1}{2}$, the number of such permutations is:
 $$\binom{n}{2k+2}\binom{k+1}{2}.$$
Next we count the reduced permutations with two simply twisted  cycles. 
\begin{figure}[h]
\includegraphics{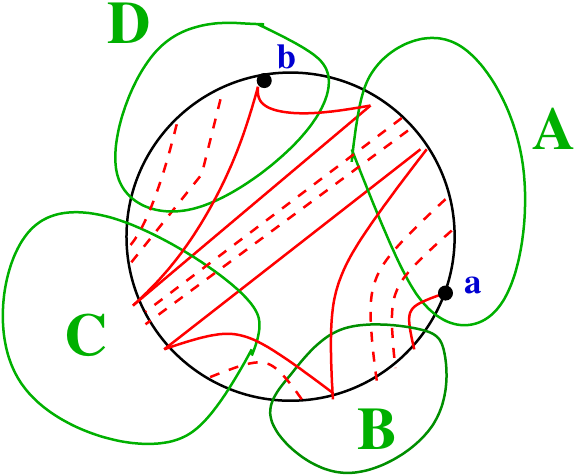}
\caption{Reduced permutation with two simply twisted cycles}
\label{fig:reducedPerms2}
\end{figure}
The general shape of such a permutation is represented in
Figure~\ref{fig:reducedPerms2}. 
Note that the number of points $i$ such that $\alpha(i) \neq i+1$ is $3$
for each of the two simply twisted cycles and $2$ for each of the $k-2$ non
twisted cycles giving a total number of $2k+2$ such points.  Moreover,
if we know these points then the  
permutation is completely determined by the number of bicolored cycles
having points in $(A,B), (B,C) (A,C), (C,D)$ so that a sequence of four
non-negative integers  with sum equal to $k-2$. Since the number 
of such sequences is $\binom{k+1}{3}$ the number of such permutations is:
 $$\binom{n}{2k+2}\binom{k+1}{3}.$$
We obtained that the number of all reduced permutations with two back
points is 
$$r_2(n,k)=\binom{n}{2k+2}\binom{k+1}{2}+\binom{n}{2k+2}\binom{k+1}{3},$$
and the stated equality follows from Pascal's formula.

\item The reduced permutations with only  one  simply twisted cycle.
\begin{figure}[h]
\includegraphics{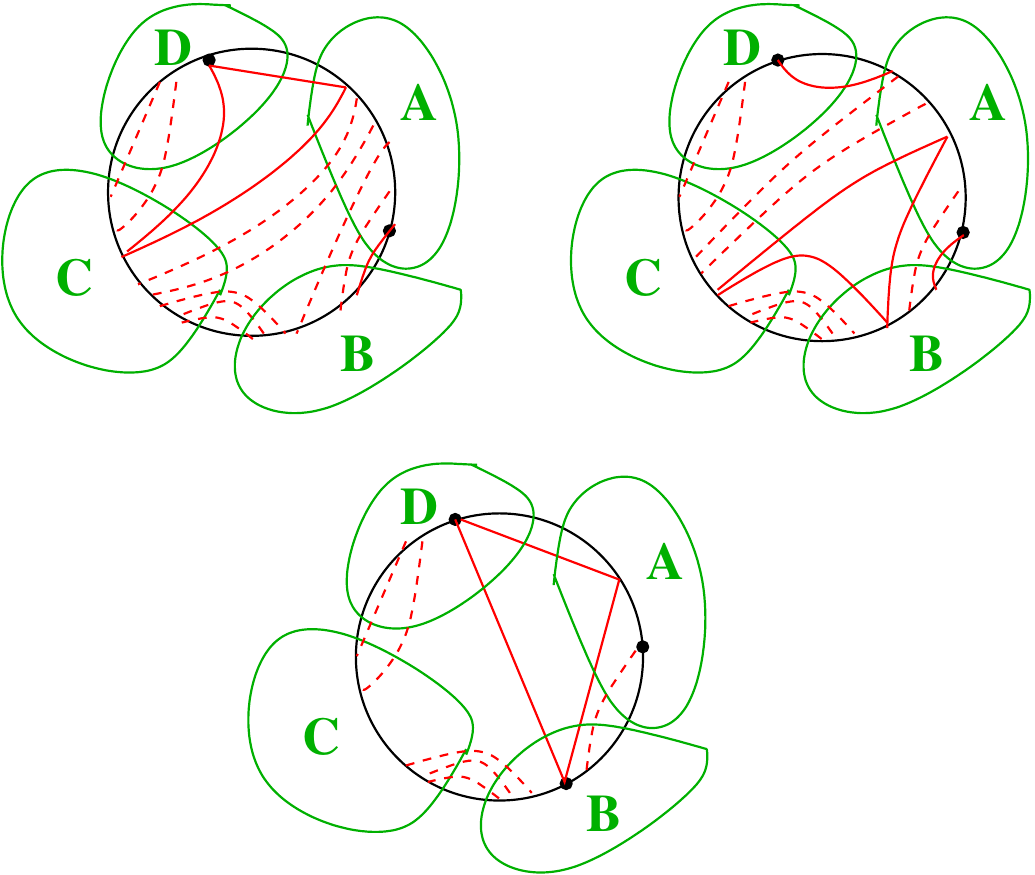}
\caption{Three reduced permutations with one simply twisted cycle}
\label{fig:reducedPerms4}
\end{figure}

The general shape of such a permutation is  represented in Figure
\ref{fig:reducedPerms4}. 
There are three different cases depending on whether the twisted cycle
is colored by $A, B, C$, or $A, C, D$ or $A, B, D$.
Note that the number of points $i$ such that $\alpha(i) \neq i+1$ is $3$
for the  simply twisted cycle and $2$ for each of the the $k-1$
non-twisted cycles, giving a total number of $2k+1$ such points.
Moreover, if we know these points 
then the permutation is completely determined by the number of bicolored
cycles having points in $(A,B), (B,C) (A,C), (C,D)$ in the two first
situations so that a sequence of four non-negative integers 
 with sum equal to $k-2$, is necessary since in the first case there is
 one cycle colored $(A, D)$ and in the second one a cycle colored
 $A,B$. In the third situation there are no cycles with elements colored
 $A,C$ so that there only 3 non-negative integers need to be known. 
 So that the number of such sequences is $\binom{k+1}{3}$ in the first
 two cases and  $\binom{k+1}{2}$ in the third one. 
in the and  the number of such permutations is:
 $$r_1(n,k)=\binom{n}{2k+1}\left(2 \binom{k+1}{3}  +   \binom{k+1}{2} \right) $$
The stated equality follows by Pascal's formula.
\end{enumerate}
Finally, adding the equations for the $r_j(n,k)$ yields 
$$
r_*(n,k)
=\binom{n}{2k}\binom{k+1}{3}
 +\binom{n}{2k+1}\left(\binom{k+2}{3}+ \binom{k+1}{3}\right)
 +\binom{n}{2k+2}\binom{k+2}{3}. 
$$
Using Pascal's formula two more times yields the stated result.
\end{proof}

\begin{proposition}
\label{prop:rgenp}
The ordinary generating function for the reduced permutations of genus $1$,
counting the number of points  and cycles, is given by:
$$
R_*(x,y)=\frac{yx^3(1-x)^2(1-x+xy)}{((1-x)^2-yx^2)^4}.
$$
More precisely, for $j=0,1,2$, the ordinary generating function for the
reduced permutations of genus $1$ with $j$ back points,
counting the number of points  and cycles, is given by:
$$
R_0(x,y)=\frac{y^2x^4(1-x)^3}{\left((1-x)^2 - yx^2\right)^4},
\quad
R_2(x,y)=\frac{yx^4(1-x)^3}{\left((1-x)^2 - yx^2\right)^4}
\quad\mbox{and}
$$
$$
R_1(x,y)=\frac{yx^3(1-x)^2((1-x)^2+yx^2)}{\left((1-x)^2 - yx^2\right)^4}.
$$
\end{proposition}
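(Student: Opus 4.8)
The plan is to derive each of the three generating functions $R_0$, $R_1$, $R_2$ directly from the explicit binomial formulas for $r_0(n,k)$, $r_1(n,k)$, $r_2(n,k)$ established in Theorem~\ref{thm:rnkp}, using the same summation technique as in the proof of Theorem~\ref{thm:rnk}. The formula $R_0(x,y)=y^2x^4(1-x)^3/((1-x)^2-yx^2)^4$ is already proved in Theorem~\ref{thm:rnk}, so nothing new is needed there; I restate it only for the reader's convenience. For $R_2$, I would start from $r_2(n,k)=\binom{n}{2k+2}\binom{k+2}{3}$, apply formula~(\ref{eq:binomial}) with $u=x$ and $m=2k+2$ to sum over $n$, obtaining $\sum_k\binom{k+2}{3}y^k x^{2k+2}/(1-x)^{2k+3}$, then factor out the appropriate powers of $x$, $y$, $(1-x)$ so that the remaining sum has the shape $\sum_k\binom{k+2}{3}u^{k+2}$ with $u=yx^2/(1-x)^2$, and apply (\ref{eq:binomial}) once more with $m=3$. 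After simplifying the powers of $(1-x)$ this yields $yx^4(1-x)^3/((1-x)^2-yx^2)^4$.

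For $R_1$, I would use $r_1(n,k)=\binom{n}{2k+1}\left(\binom{k+2}{3}+\binom{k+1}{3}\right)$ and split the sum into two pieces. Summing over $n$ via (\ref{eq:binomial}) with $m=2k+1$ produces $x^{2k+1}/(1-x)^{2k+2}$ in each term; then each of the two pieces becomes, after extracting prefactors, a sum of the form $\sum_k\binom{k+2}{3}u^{k+2}$ or $\sum_k\binom{k+1}{3}u^{k+1}$ with $u=yx^2/(1-x)^2$, each evaluated by (\ref{eq:binomial}) with $m=3$. Collecting the two results over the common denominator $((1-x)^2-yx^2)^4$ and simplifying the numerator should give $yx^3(1-x)^2((1-x)^2+yx^2)/((1-x)^2-yx^2)^4$; the small algebra here is recognizing that the two numerators combine as $(1-x)^2$ times $((1-x)^2+yx^2)$ after pulling out $yx^3(1-x)^2$.

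Finally, the aggregate formula for $R_*(x,y)$ follows by adding $R_0+R_1+R_2$ over the common denominator: the numerators are $y^2x^4(1-x)^3$, $yx^3(1-x)^2((1-x)^2+yx^2)$, and $yx^4(1-x)^3$, whose sum I would factor as $yx^3(1-x)^2$ times $\bigl(yx(1-x)+((1-x)^2+yx^2)+x(1-x)\bigr)$; the bracketed expression simplifies to $(1-x)^2+x(1-x)(1+y)+yx^2 = (1-x)(1-x+x+xy) = (1-x)(1-x+xy)$... more carefully, $(1-x)^2 + x(1-x) + yx^2 + yx(1-x) = (1-x)(1-x+x) + yx(x + 1 - x) = (1-x) + yx$, wait — I would redo this collection step carefully, but the target identity to verify is that the sum of the three numerators equals $yx^3(1-x)^2(1-x+xy)\cdot$ (denominator-matching factor), consistent with $R_*$ having denominator $((1-x)^2-yx^2)^4$. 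The only genuine obstacle is this last bookkeeping: making sure the powers of $x$ and $(1-x)$ are tracked correctly when combining terms with slightly different prefactors (the $R_1$ term carries $x^3$ and $(1-x)^2$ while the others carry $x^4$ and $(1-x)^3$), and confirming the numerator of $R_*$ is exactly $yx^3(1-x)^2(1-x+xy)$ as claimed. Everything else is a mechanical application of (\ref{eq:binomial}) twice, exactly as in Theorem~\ref{thm:rnk}.
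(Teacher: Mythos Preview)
Your approach is correct and essentially identical to the paper's: both invoke Theorem~\ref{thm:rnk} for $R_0$, compute $R_1$ by two applications of~(\ref{eq:binomial}) exactly as you outline, and obtain $R_*$ by summing. The only difference is that the paper gets $R_2$ via the one-line observation $r_2(n,k)=r_0(n,k+1)$, hence $yR_2=R_0$, rather than by a separate computation; also, your ``wait'' in the $R_*$ step is unwarranted, since $(1-x)+yx=1-x+xy$ is precisely the factor you want.
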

\begin{proof}
We derive our formulas from the expressions for the numbers $r_j(n,k)$
stated in Theorem~\ref{thm:rnkp}. The formula for $R_0(x,y)$ was shown in
the proof of Theorem~\ref{thm:rnk}. Comparing the expressions for
$r_0(n,k)$ and $r_2(n,k)$ yields
$r_2(n,k)=r_0(n,k+1)$, implying $yR_2(x,y)=R_0(x,y)$. We are left to
show the formula for $R_1(x,y)$, the formula for $R_*(x,y)$ may then be
obtained by taking the sum of the equations for $R_j(x,y)$ where $j=0,1,2$.

We may derive the formula for $R_1(x,y)$ in a way that is completely
analogous to the computation $R_0(x,y)$ given in the proof of
Theorem~\ref{thm:rnk}, using (\ref{eq:binomial}) several times, as
outlined below:  
\begin{eqnarray*}
R_1(x,y)&=&\displaystyle 
\sum_{k\geq 1}\left(\binom{k+2}{3}+\binom{k+1}{3}\right)  y^k
  \sum_{n\geq 2k+1}\binom{n}{2k+1} x^n\\
&=& \sum_{k\geq 1}\left(\binom{k+2}{3}+\binom{k+1}{3}\right)  y^k 
  \frac{x^{2k+1}}{(1-x)^{2k+2}}\\
&=& \frac{(1-x)^2}{y^2x^3} \sum_{k\geq 1} \binom{k+2}{3} 
        \left(\frac{yx^2}{(1-x)^2}\right)^{k+2}
   + \frac{1}{yx} \sum_{k\geq 2} \binom{k+1}{3} 
        \left(\frac{yx^2}{(1-x)^2}\right)^{k+1}\\
&=& \left(\frac{(1-x)^2}{y^2x^3}+ \frac{1}{yx}\right)\cdot 
        \frac{\left(\frac{yx^2}{(1-x)^2}\right)^3}{\left(1-\frac{yx^2}{(1-x)^2}\right)^4}.
\end{eqnarray*}
Simplifying by the factors of $(1-x)$ yields the stated formula.
\end{proof}

\section{Reducing permutations and reinserting trivial cycles}
\label{sec:red}

To count all partitions and permutations of genus $1$ we first count the
reduced objects in each class, and then count all objects obtained by
inserting trivial cycles (see Definition~\ref{def:trivcycle}) in all
possible ways. In this section we describe in general how such a
counting process may be performed. 

\begin{definition}
A {\em trivial reduction} $\pi'$ of a permutation $\pi$ of $\{1,2,\ldots,n\}$
is a permutation obtained from $\pi$ by removing a trivial cycle
$(i,i+1,\ldots,j)$ and decreasing all $k\in \{j+1,j+1,\ldots,n\}$ by 
$j-\min(0,i-1)$ in the cycle decomposition of $\pi$.  
\end{definition}
Note that a trivial cycle may contain $n$, followed by $1$, in the case
when $i>j$, and that a trivial cycle may also consist of a single fixed
point when $i=j$. Clearly $\pi'$ is a permutation of $\{1,\ldots,n'\}$ for
$n' = n-|\{i,i+1,\ldots,j\}|$ and has the same 
    genus (if we replace $\zeta_n$ with $\zeta_{n'}$). Conversely we
    will say that $\pi$ is a {\em trivial extension} (or 
    an extension) of $\pi'$. For example, a trivial reduction of
    $(1,6)(2,3,4)(5,7)$ is $(1,3)(2,4)$. Clearly a permutation is {\em reduced} 
exactly when it has no trivial reduction. In order to avoid having to treat
permutations of genus zero differently, we postulate that {\em the empty
  permutation is a reduced permutation of the empty set}.

\begin{proposition}
\label{prop:uniqred}
For any permutation $\pi$ of positive genus there is a unique reduced
permutation $\pi'$ that may be obtained by performing a sequence of
reductions on $\pi$. If $\pi$ has genus zero then this reduced permutation
is the empty permutation on the empty set. 
\end{proposition}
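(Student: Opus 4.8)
The plan is to view trivial reduction as a one-step rewriting rule on permutations of finite sets $\{1,\dots,n\}$ and to show this rewriting system is terminating and confluent; the proposition is then the statement that a terminating confluent system has unique normal forms, together with an identification of this normal form in the genus-zero case. First I would record termination and existence: a trivial reduction removes a trivial cycle, which contains at least one point, so it strictly decreases the size of the ground set; hence every chain of reductions issuing from $\pi$ is finite and ends at a reduced permutation $\pi'$. Recall also, from the discussion preceding the proposition, that a trivial reduction preserves the genus (with $\zeta_n$ replaced by the appropriate $\zeta_{n'}$), so whatever reduced $\pi'$ is reached has the same genus as $\pi$.

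The heart of the argument is a diamond lemma: if $C_1\neq C_2$ are two trivial cycles of $\pi$, then reducing $\pi$ along $C_1$ and then along the relabelled copy of $C_2$ produces exactly the same permutation as reducing along $C_2$ and then along the relabelled copy of $C_1$. Here one uses that distinct cycles of a permutation are disjoint as sets, so $C_1$ and $C_2$ are disjoint cyclic intervals of $\{1,\dots,n\}$; deleting one of them from the circle $1,2,\dots,n$ leaves the other a contiguous arc, hence a trivial cycle after the relabelling, so the second reduction is defined. Moreover the relabelling attached to a trivial reduction is just the order-preserving bijection from the surviving points onto a set $\{1,\dots,m\}$; since a composite of order-preserving bijections is order-preserving and there is a unique order-preserving bijection between two finite totally ordered sets, the two-stage relabelling $\{1,\dots,n\}\setminus(C_1\cup C_2)\to\{1,\dots,n-|C_1|-|C_2|\}$ is the same in both orders of deletion. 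As $C_1$ and $C_2$ are unions of cycles of $\pi$, the resulting permutation is in either case the transport of the restriction of $\pi$ to $\{1,\dots,n\}\setminus(C_1\cup C_2)$ along this common relabelling; call it $\pi_{12}$.

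Uniqueness of $\pi'$ I would then obtain by induction on $n$. If $\pi$ is reduced, its only reduct is itself. Otherwise the only freedom in a first step is the choice of a trivial cycle to remove; choosing two such, $C_1$ and $C_2$, yields permutations $\pi_{C_1}$ and $\pi_{C_2}$, and by the diamond lemma both reduce to the common permutation $\pi_{12}$. As $\pi_{C_1}$, $\pi_{C_2}$ and $\pi_{12}$ each have fewer than $n$ points, the induction hypothesis assigns to each a unique reduced form, and the reduced form of $\pi_{C_i}$ must coincide with that of $\pi_{12}$ because $\pi_{12}$ is reachable from $\pi_{C_i}$ by reductions. Hence $\pi_{C_1}$ and $\pi_{C_2}$ have the same reduced form; as $C_1,C_2$ were arbitrary, every maximal reduction chain from $\pi$ terminates at the same reduced permutation $\pi'$.

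Finally, for the genus-zero statement: since reductions preserve the genus, $\pi'$ has genus $0$ whenever $\pi$ does, so it suffices to show that the only reduced permutation of genus $0$ is the empty one. A nonempty permutation of genus $0$ is the permutation $\alpha_Q$ of a noncrossing partition $Q$ of some $\{1,\dots,n'\}$ with $n'\geq 1$, and every nonempty noncrossing partition has a block consisting of cyclically consecutive points: applying the elementary observation from the proof of Lemma~\ref{lemma:planarPartition} to a block that is not an interval produces a strictly smaller block contained in one of its gaps, and iterating terminates at such a block, which is a trivial cycle. Thus $\alpha_Q$ is not reduced, as needed. I expect the only genuine work to be the bookkeeping in the diamond lemma --- checking that deleting one trivial cycle leaves the other a trivial cycle and that the two deletion orders yield literally the same permutation --- with the one mildly delicate point being the case where one of the two trivial cycles contains the arc running from $n$ back to $1$.
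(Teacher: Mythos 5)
Your proposal is correct, but it proves the proposition by a genuinely different route than the paper. You set up trivial reduction as a terminating rewriting system and prove a diamond lemma: two distinct trivial cycles are disjoint cyclic intervals, removing one leaves the other a trivial cycle (your wrap-around caveat is the right one to check, and it does check out since the order-preserving relabelling respects the induced cyclic order), and the two removal orders give the same permutation because the composite relabelling is the unique order-preserving bijection onto the smaller index set; Newman-style induction on $n$ then gives uniqueness, and the genus-zero case follows from genus invariance, Cori's theorem, and the interval-block observation from Lemma~\ref{lemma:planarPartition}. The paper instead gives a global characterization: it defines a cycle of $\pi$ to be \emph{removable} via explicit noncrossing/untwisted conditions on the cycle and on the cycles nested in its arcs, shows that exactly the removable cycles get deleted in every reduction process, and notes that the final label of each surviving element is forced; genus zero is then the case where every cycle is removable. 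Your argument is shorter and more standard, and it avoids the slightly delicate induction showing that every removable cycle eventually becomes trivial; what the paper's approach buys is the explicit notion of removable cycle and the process-independence of \emph{which} points are removed and how survivors are relabelled, which is strictly more than uniqueness of the normal form and is reused verbatim in the proof of Theorem~\ref{thm:pd} (the decomposition of a permutation into a reduced core plus noncrossing partitions on the arcs). If one adopted your proof, that extra structural statement would still have to be extracted, e.g.\ by strengthening your induction to track the set of surviving points, though this is not needed for the proposition as stated.
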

\begin{proof}
There is at least one reduced permutation that we may reach by
performing reductions until no reduction is possible. We only need to
prove the uniqueness of the resulting permutation.

Let us call a cycle $(i_1,\ldots, i_k)$ of $\pi$ {\em removable} if it
has the following properties:
\begin{enumerate}
\item the cyclic order of the elements $(i_1,\ldots,i_k)$ is the
  restriction of the cyclic order $\zeta$ to the set
  $\{i_1,\ldots,i_k\}$;
\item no other cycle of $\pi$ crosses $(i_1,\ldots,i_k)$;
\item the cycles whose elements belong to one of the arcs $[i_1, i_2]$, $[i_2,
  i_3]$, \ldots, or $[i_{k-1}, i_k]$ are not twisted;
\item no cycle whose elements belong to one of the arcs $[i_1, i_2]$, $[i_2,
  i_3]$, \ldots, or $[i_{k-1}, i_k]$ crosses any other cycle of $\pi$.
\end{enumerate}
We claim that a cycle of $\pi$ gets removed in any and every reduction
process that leads to a reduced permutation, exactly when $\pi$ is
removable. On the one hand it is easy to see directly that any cycle
that gets removed in the reduction process must be removable: assume 
after a certain number of reductions, the cycle $(i_1,\ldots,i_k)$
becomes the trivial cycle $(i,i+1,\ldots,j)$ where $i_1$ corresponds to
$i_1$. Applying a reduction or an extension does not change the fact
whether a cycle, present in both permutation is obtained by the
restricting the cyclic order of all elements, this proves property (1).
Neither the previously removed cycles, nor the cycles surviving after
the removal of $(i_1,\ldots,i_k)$ can cross $(i_1,\ldots,i_k)$. The last
two properties follow from the fact that the cycles whose elements 
belong to one of the arcs $[i_1, i_2]$, $[i_2,  i_3]$, \ldots, or
$[i_{k-1}, i_k]$ all become trivial cycles in the reduction process.

On the other hand, it is easy to show by induction on the
number of cycles located on the arcs $[i_1, i_2]$, $[i_2,
  i_3]$, \ldots, $[i_{k-1}, i_k]$ of a removable cycle that every
removable cycle ends up being removed in the reduction process. The
basis of this induction is that a removable cycle containing no other
cycles on its arcs is trivial. Any other removable cycle becomes trivial
after the removal of all cycles contained on the arcs $[i_1, i_2]$, $[i_2,
  i_3]$, \ldots, $[i_{k-1}, i_k]$: these cycles are easily seen to be
removable due to properties (3) and (4) and, if we list the elements of
each such cycle $(j_1,\ldots,j_l)$ in the order they appear on the
respective arc $[i_s,i_{s+1}]$, then the set of cycles contained 
on the arcs $[j_1,j_2]$, \ldots, $[j_{l-1},j_l]$ is a proper subset of the
cycles contained on the arc $[i_s,i_{s+1}]$. The induction hypothesis
thus becomes applicable.

We found that the exact same cycles get removed in every reduction
process that yields a reduced permutation, even if the order of the
reduction steps may vary. After each reduction step, the surviving
elements get relabeled, and the new label depends on the actual
reduction step. However, it is easy to find the final label of each
element $i$ located in a cycle that ``survives'' the entire reduction
process: $i$ gets decreased exactly by the number of all elements of
$\{1,\ldots,i-1\}$ that belong to a removable cycle.

Clearly a permutation has genus zero exactly when all of its cycles are
removable. 
\end{proof}
As a consequence of Proposition~\ref{prop:uniqred}, if a class of
permutations is closed under reductions and extensions then we are able
to describe this class reasonably well by describing the reduced
permutations in the class. Examples of such permutation classes include:
\begin{itemize}
\item[--] the class of all partitions;
\item[--] the class of all permutations of a given genus;
\item[--] the class of all partitions of a given genus.
\end{itemize}   
The main result of this section shows that knowing the reduced
permutations allows not only to {\em describe} but also to {\em count}
the permutation in the class closed under reductions and extensions that
they generate.
To state our main
result we will need to use the generating function 
\begin{equation}
\label{eq:dxy}
D(x,y)=\frac{1-x-xy-\sqrt{(x+xy-1)^2-4x^2y}}{2\cdot x}+1
\end{equation}
of noncrossing partitions. This function is the formal power series
solution of the quadratic equation
\begin{equation}
\label{eq:Deq}
D(x,y)=1+ xy\cdot D(x,y)+ x\cdot (D(x,y)-1)D(x,y),
\end{equation}
whose other solution is only a formal Laurent series. As it is
well-known \cite[sequence A001263]{OEIS}, $[x^ny^k] 
D(x,y)$ is the  number of noncrossing partitions of the set
$\{1,\ldots,n\}$ having $k$ parts. Note that we deviate from the usual
conventions by defining the constant term
to be $1$, i.e. we consider that there is one noncrossing partition on
the empty set and it has zero blocks. Our main result is the following.
\begin{theorem}
\label{thm:pd}
Consider a class ${\mathcal C}$ of permutations that is closed under 
trivial reductions and extensions. Let $p(n,k)$ and $r(n,k)$
respectively be the number of all, respectively all reduced permutations
of $\{1,\ldots,n\}$ in the class having $k$ cycles. Then the generating
functions $P(x,y) =\sum_{n,k} p(n,k) x^ny^k$ and
$R(x,y) =\sum_{n,k} r(n,k) x^ny^k$ satisfy the equation
$$
P(x,y)=R(x\cdot D(x,y),y)\cdot 
\left(1+x\cdot \frac{\frac{\partial}{\partial x} D(x,y)}{D(x,y)}\right). 
$$
Here $D(x,y)$ is the generating function of noncrossing partitions given
in \eqref{eq:dxy}.
\end{theorem}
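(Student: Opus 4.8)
The plan is to set up a bijective decomposition of an arbitrary permutation $\pi$ in the class $\mathcal C$ into its underlying reduced permutation $\pi'$ together with the data that records how the trivial cycles were reinserted, and then translate this decomposition into the stated generating function identity. By Proposition~\ref{prop:uniqred}, each $\pi$ determines a unique reduced $\pi'$, and the removable cycles of $\pi$ are exactly those that get deleted in any reduction process. The key structural observation is that, after fixing $\pi'$ on a set of size $n'$, building an extension $\pi$ of $\pi'$ amounts to choosing, independently, a noncrossing partition to be ``inserted'' into each of the arcs determined by $\pi'$ on the circle. More precisely, the removable cycles of $\pi$ that collapse to nothing in $\pi'$, together with the nesting structure among them, form exactly a forest of noncrossing partitions; and every element surviving into $\pi'$ carries with it a block of such inserted material immediately following it in the circular order.

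First I would make the arc structure precise. Given the reduced $\pi'$ of size $n'$, think of its $n'$ points arranged on a circle; the extension $\pi$ is obtained by replacing each point $p$ of $\pi'$ by $p$ itself followed by a (possibly empty) noncrossing partition sitting on a consecutive block of new points inserted just after $p$. The cycles lying inside such an inserted block are precisely the removable cycles of $\pi$ that do not survive, and the genus contribution of each inserted block is zero, so closure of $\mathcal C$ under extensions guarantees that any such insertion yields a member of $\mathcal C$, and closure under reductions guarantees the converse. Each element of $\pi'$ therefore contributes, via the inserted block after it, a factor counted by $D(x,y)$: one inserted noncrossing partition on some number of new points with some number of new blocks. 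The variable $x$ marks the new points and $y$ marks the new blocks, while the original $n'$ points and $k'$ cycles of $\pi'$ are already accounted for. Thus, \emph{if} we only had to keep track of one inserted block per point, the substitution $x\mapsto x\cdot D(x,y)$ (each of the $n'$ points now weighs $x$ times the weight of its inserted block) applied to $R(x,y)$ would give the count — this explains the factor $R(x\cdot D(x,y),y)$.

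The correction factor $\bigl(1+x\,\partial_x D/D\bigr)$ arises because there is one more insertion site than there are surviving points: one may also insert a noncrossing partition ``at the root'', before the first point (equivalently, into the outer face), and the bookkeeping for this is subtler because the rooting breaks the cyclic symmetry. The cleanest way I would handle this is to recall (from \eqref{eq:Deq}) the standard decomposition of a noncrossing partition by the block containing the first point: each noncrossing partition on a linearly ordered set either is empty, or its first point forms its own block followed by another noncrossing partition, or its first point lies in a block whose ``gaps'' are themselves filled with noncrossing partitions. Differentiating: $x\,\partial_x D$ counts noncrossing partitions with a distinguished point, and $x\,\partial_x D/D$ counts such a structure relative to the ``tail'' partition after the distinguished point. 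The identity $D = 1 + x\,\partial_x D/D \cdot(\text{something})$-type manipulation, or more directly a $\log$-derivative computation using \eqref{eq:Deq}, should show that $1 + x\,\partial_x D/D$ is exactly the generating function for \emph{a noncrossing partition on a linear order together with a marked ``origin'' position among the $n+1$ slots between and around the points, weighted so that the origin-block is counted correctly}. I would then argue that specifying $\pi$ is equivalent to specifying $\pi'$ (weight $R$), specifying for each of its $n'$ points the block inserted after it (the substitution into $x$), and additionally specifying the root insertion with its distinguished position — giving precisely the product $R(x\cdot D(x,y),y)\cdot\bigl(1+x\,\partial_x D/D\bigr)$.

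The main obstacle I anticipate is making the root/rooting correction rigorous: one must check that the $n'+1$ insertion slots are handled without double counting, that the ``extra'' slot genuinely contributes $1 + x\,\partial_x D/D$ rather than simply $D$, and that this matches the combinatorics of where the point labeled $1$ ends up after reinsertion. Concretely, I expect the proof to reduce to the algebraic identity
\begin{equation}
\label{eq:rootcheck}
1 + x\cdot\frac{\partial_x D(x,y)}{D(x,y)}
  \;=\; \frac{D(x,y)}{1 - x\,(D(x,y)-1) - xy}
  \;=\; \frac{1}{2 - D(x,y)} \quad\text{(as formal power series),}
\end{equation}
which one derives by differentiating \eqref{eq:Deq} and solving for $\partial_x D$; the rightmost expression is then interpreted combinatorially as the weight of the root block ``reading outward'' from the point $1$, confirming the count. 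Once \eqref{eq:rootcheck} is in hand, the theorem follows by assembling the bijection above and reading off generating functions term by term.
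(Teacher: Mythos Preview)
Your structural setup --- each permutation decomposes into its unique reduced core together with noncrossing partitions inserted into the arcs between surviving points --- is exactly the paper's approach, and your explanation of why the substitution $x\mapsto xD(x,y)$ accounts for those insertions is correct.

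The gap is in your treatment of the correction factor $1+x\,\partial_x D/D$. There is no ``extra'' insertion site: on a circle with $n'$ surviving points there are exactly $n'$ arcs, not $n'+1$. The correction instead comes from the location of the label~$1$ in the extended permutation. Either $1$ lies on a non-removable (surviving) cycle, in which case $1$ in $\pi$ corresponds to $1$ in $\pi'$ and all $n'$ arcs carry an ordinary factor $D$ (this is the paper's Case~1, contributing $R(xD,y)$); or $1$ lies on a removable cycle, in which case it sits inside the inserted noncrossing partition in the single arc immediately preceding the point that becomes $1$ in $\pi'$. In that second case that one arc carries a noncrossing partition with a \emph{marked point} (weight $x\,\partial_x D$) while the remaining $n'-1$ arcs carry ordinary factors $D$, giving $R(xD,y)\cdot x\,\partial_x D/D$. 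Summing the two cases yields the stated factor.

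Your proposed identity \eqref{eq:rootcheck} is simply false. Using $D(1+x-xy-xD)=1$ (a rearrangement of \eqref{eq:Deq}) one finds $1-x(D-1)-xy=1/D$, so your middle expression equals $D^2$; and a direct check shows $1+x\,\partial_x D/D=D(1-xD)/(1-xD^2)\ne D^2$ and $\ne 1/(2-D)$. So the algebraic route you sketched cannot close the argument. The fix is exactly the case split above on whether the element $1$ is removable; once you adopt that, no identity like \eqref{eq:rootcheck} is needed, and the proof is complete.
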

\begin{proof} Consider an arbitrary permutation $\pi$ of
  $\{1,\ldots,n\}$ in the class having $k$ cycles. We distinguish two cases, and
  describe the generating function of the permutations belonging to each
  case. The term ``removable cycle'' we use here is the one that was
  defined in the proof of Proposition~\ref{prop:uniqred}. 

\bigskip
\noindent{\bf Case 1} {\em The element $1$ does not belong to a
  removable cycle.} After reducing 
the permutation to the reduced permutation $\pi'$, we obtain a reduced
permutation on the set 
$\{1,\ldots,n_1\}$ having $k_1$ blocks for some $n_1\leq n$ and $k_1\leq
k$. The cycles of $\pi$ permutation that were removed have $n-n_1$
elements, and they form $n_1$ noncrossing partitions  
on the arcs created by the elements appearing in $\pi'$. They also
have $k-k_1$ blocks.  Thus there
are exactly $[x^{n-n_1}y^{k-k_1}] D(x,y)^{n_1}$ permutations
that may be reduced to the same reduced partition. 
The number of permutations counted in this case is
$$
\sum_{n_1\geq 4}\sum_{k_1\geq 2}  r(n_1,k_1) [x^{n-n_1}y^{k-k_1}] D(x,y)^{n_1}
$$
Using the fact that, for any formal power series $f(x,y)$, 
$[x^{n-n1}y^{k-k_1}]f(x,y)$ is the same as $[x^ny^k] x^{n_1}y^{k_1}
f(x),y$, we see that the above sum is exactly the coefficient of
$x^ny^k$ in $R(x\cdot D(x,y),y)$. 

\bigskip
\noindent{\bf Case 2} {\em The element $1$ belongs to a removable
  cycle.} Let $j+1$, respectively $i-1$ be the smallest, respectively
largest element that does not belong to a removable cycle. The arc 
$\{i,i+1,\ldots,n,1,\ldots,j\}$ is then a union of elements of removable
cycles. (Here we allow
$i-1=n$, then $i=1$ and $n$ does not belong to the arc). 
Let us denote the number of elements
of this arc by $n_2$ and assume that the noncrossing partition formed by
the removable cycles whose elements belong to this
arc has $k_2$ blocks. As in the previous case, let $n_1$ be the number of
elements belonging to not removable cycles, and assume that there are $k_1$
not removable cycles. There are $r(n_1,k_1)$ ways to select the
reduced permutation, $[x^{n_2}y^{k_2}] D(x,y)$ ways to select the
noncrossing partition on the arc $\{i,i+1,\ldots,n,1,\ldots,j\}$
containing $1$, and $n_2$ ways to select the position of $1$ in its
arc. We need to fill in the remaining $n-n_1-n_2$ elements of removable cycles
and group them into noncrossing partitions on the $n_1-1$ other
arcs created by the $n_1$ elements of not removable cycles. We also need to
make sure that the number of these other removable cycles is $k-k_1-k_2$. 
The number of permutations counted in this case is
$$
\sum_{n_1\geq 4}\sum_{k_1\geq 2}\sum_{n_2\geq 1}\sum_{k_2\geq 1}  
    r(n_1,k_1) \left(n_2 [x^{n_2}y^{k_2}] D(x,y)\right)\cdot
    \left([x^{n-n_1-n_2}y^{k-k_1-k_2}] D(x,y)^{n_1-1}\right).  
$$
Note that $n_2 [x^{n_2}y^{k_2}] D(x,y)$ in the above sum is the
coefficient of $x^{n_2}y^{k_2}$ in $x\cdot \frac{\partial}{\partial x}
D(x,y)$. Using the same observation as at the end of the previous case,
we obtain that the number of partitions counted in this case is 
$$
[x^ny^k]\left( R(x\cdot D(x,y),y)\cdot x\cdot \frac{\frac{\partial}{\partial
    x} D(x,y)}{D(x,y)}\right). 
$$
\end{proof}
We conclude this section with rewriting the factor 
$1+x\cdot \frac{\partial}{\partial x} D(x,y)/D(x,y)$, appearing
in Theorem~\ref{thm:pd}, in an equivalent form.
\begin{proposition}
\label{prop:dxd}
$$
1+x\cdot \frac{\frac{\partial}{\partial x} D(x,y)}{D(x,y)}
=
\frac{1-xD(x,y)}{\sqrt{(x+xy-1)^2-4x^2y}}.
$$
\end{proposition}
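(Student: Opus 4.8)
The plan is to differentiate the defining quadratic equation \eqref{eq:Deq} implicitly with respect to $x$, and then to use the explicit formula \eqref{eq:dxy} to recognize the denominator that appears as exactly the square root on the right-hand side. Throughout I abbreviate $D=D(x,y)$ and write $D_x$ for $\frac{\partial}{\partial x}D(x,y)$, treating $y$ as a parameter.

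First I would rewrite \eqref{eq:Deq} in the form $D=1+xyD+xD^2-xD$ and differentiate both sides with respect to $x$, obtaining
\[
D_x = yD + xyD_x + D^2 + 2xD\,D_x - D - xD_x,
\]
which rearranges to
\[
D_x\,(1+x-xy-2xD) \;=\; D^2+yD-D \;=\; D\,(D+y-1).
\]
Dividing by $D$ and adding $1$ then gives, after putting everything over the common denominator,
\[
1+x\cdot\frac{D_x}{D}
\;=\; 1+\frac{x(D+y-1)}{1+x-xy-2xD}
\;=\; \frac{(1+x-xy-2xD)+xD+xy-x}{1+x-xy-2xD}
\;=\; \frac{1-xD}{1+x-xy-2xD}.
\]

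Second I would identify the denominator $1+x-xy-2xD$ with the square root. From \eqref{eq:dxy} we have $2x(D-1)=1-x-xy-\sqrt{(x+xy-1)^2-4x^2y}$, hence
\[
\sqrt{(x+xy-1)^2-4x^2y} \;=\; 1-x-xy-2x(D-1) \;=\; 1+x-xy-2xD,
\]
and substituting this into the previous display yields the claimed identity. (Alternatively one may verify $\bigl(1+x-xy-2xD\bigr)^2=(x+xy-1)^2-4x^2y$ directly from \eqref{eq:Deq} and fix the branch of the square root by comparing constant terms, since both sides tend to $1$ as $x\to 0$.)

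There is no genuine obstacle here: the computation is routine once one observes that the factor $1+x-xy-2xD$ produced by implicit differentiation is precisely the radical occurring in \eqref{eq:dxy}. The only point that requires a moment's care is the choice of branch of the square root, but this is already pinned down by the explicit formula \eqref{eq:dxy} defining $D$.
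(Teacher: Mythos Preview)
Your proof is correct and follows essentially the same route as the paper: implicit differentiation of \eqref{eq:Deq} to obtain $D_x(1+x-xy-2xD)=D(D+y-1)$, hence $1+xD_x/D=(1-xD)/(1+x-xy-2xD)$, and then identification of the denominator with the square root via \eqref{eq:dxy}. The paper presents the same three equations (its \eqref{eq:Dpart1}, \eqref{eq:dfactor}, \eqref{eq:discr}) in the same order.
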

\begin{proof}
We may rewrite \eqref{eq:Deq} as 
$$
x\cdot D(x,y)^2+(xy-1-x) D(x,y)+1=0.
$$
Taking the partial derivative with respect to $x$ on both sides we obtain 
$$
D(x,y)^2-2xD(x,y)\frac{\partial}{\partial x}
D(x,y)-(1-y)D(x,y)-(1+x-xy)\frac{\partial}{\partial x} D(x,y)=0.
$$
Using this equation we may express $\frac{\partial}{\partial x} D(x,y)$
as follows:
\begin{equation}
\label{eq:Dpart1}
\frac{\partial}{\partial x} D(x,y)=\frac{D(x,y)(D(x,y)+y-1)}{1+x-xy-2xD(x,y)}.
\end{equation}
This equation directly implies
\begin{equation}
\label{eq:dfactor}
1+\frac{x \frac{\partial}{\partial x} D(x,y)}{D(x,y)}=
\frac{1-xD(x,y)}{1+x-xy-2xD(x,y)}.
\end{equation}
Finally, as a direct consequence of \eqref{eq:dxy} we have 
\begin{equation}
\label{eq:discr}
1+x-xy-2xD(x,y)=\sqrt{(x+xy-1)^2-4x^2y}.
\end{equation}
Combining \eqref{eq:dfactor} and \eqref{eq:discr} yields the stated equality.
\end{proof}
\begin{corollary}
\label{cor:pd}
The formula stated in Theorem~\ref{thm:pd} is equivalent to stating
$$
P(x,y)=R(x\cdot D(x,y),y)\cdot 
\frac{1-xD(x,y)}{\sqrt{(x+xy-1)^2-4x^2y}}.
$$
\end{corollary}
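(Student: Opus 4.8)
The plan is to observe that Corollary~\ref{cor:pd} carries no content beyond what has already been established: it is simply the result of substituting the closed form of the correction factor, supplied by Proposition~\ref{prop:dxd}, into the main formula of Theorem~\ref{thm:pd}. So I would begin from the identity
$$
P(x,y)=R(x\cdot D(x,y),y)\cdot
\left(1+x\cdot \frac{\frac{\partial}{\partial x} D(x,y)}{D(x,y)}\right)
$$
proved in Theorem~\ref{thm:pd}, and then invoke Proposition~\ref{prop:dxd}, which states
$$
1+x\cdot \frac{\frac{\partial}{\partial x} D(x,y)}{D(x,y)}
=
\frac{1-xD(x,y)}{\sqrt{(x+xy-1)^2-4x^2y}}.
$$
Replacing the second factor of the product on the right-hand side of the first display by the right-hand side of the second display yields precisely the asserted expression for $P(x,y)$, and running the substitution in the opposite direction recovers the original formula; hence the two statements are \emph{equivalent}, which is what the corollary claims.

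There is no genuine obstacle here. The only point worth noting is that the manipulation is legitimate as an identity in the ring of formal power series (equivalently formal Laurent series) in which all the objects live: the composition $R(x\cdot D(x,y),y)$ is well defined because $x\cdot D(x,y)$ has zero constant term, and Proposition~\ref{prop:dxd} is itself an identity of such series, so multiplying both sides of it by $R(x\cdot D(x,y),y)$ and reading off the coefficient of $x^ny^k$ is unambiguous. I would close by remarking that the reformulation is convenient because the right-hand side no longer involves a derivative of $D(x,y)$, only $D(x,y)$ itself and the discriminant $\sqrt{(x+xy-1)^2-4x^2y}$, which is the form used in the subsequent sections when specializing $R$ to the generating functions of reduced genus~$1$ permutations and partitions.
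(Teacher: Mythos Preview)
Your proposal is correct and matches the paper's approach exactly: the corollary is stated immediately after Proposition~\ref{prop:dxd} with no separate proof, since it is just the substitution of that proposition's identity into the formula of Theorem~\ref{thm:pd}. Your additional remarks on well-definedness in the formal power series ring are fine but go beyond what the paper bothers to say.
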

\begin{remark}
The numbers 
$$J(n,k) =[x^ny^k] \left(x\cdot \frac{\frac{\partial}{\partial x}
  D(x,y)}{D(x,y)}\right)$$  
are tabulated as entry A103371 in~\cite{OEIS}.
It is stated in the work of A.\ Laradji and A.\ Umar~\cite[Corollary
  3.10]{Laradji-Umar} referenced therein, that 
$$
J(n,k)=\binom{n}{k}\binom{n-1}{k-1}.
$$
\end{remark}

\section{Counting all partitions and permutations of genus one}
\label{sec:all}

In this section we find the ordinary generating function for the numbers
$p_0(n,k)$  of all partitions of genus one the set $\{1,\ldots,n\}$,
having $k$ parts, and prove an analogous result for permutations of
genus $1$. Our main result is the following.
\begin{theorem}
\label{thm:pnkgf}
Let the number $p_0(n,k)$ of all partitions of $\{1,\ldots,n\}$ of genus
one having $k$ parts. Then the generating function
$$
P_0(x,y) =\sum_{n\geq 4}\sum_{k\geq 2} p_0(n,k) x^n y^k
$$
is given by the equation
$$
P_0(x,y)=\frac{x^4y^2}{(1-2(1+y)x+x^2(1-y)^2)^{5/2}}.
$$
\end{theorem}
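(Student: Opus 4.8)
The plan is to combine Theorem~\ref{thm:pd} (in its equivalent form stated in Corollary~\ref{cor:pd}) with the generating function $R_0(x,y)$ for \emph{reduced} genus one partitions computed in Theorem~\ref{thm:rnk}. Since the class of genus one partitions is closed under trivial reductions and extensions, the theorem applies directly with $R(x,y)=R_0(x,y)=y^2x^4(1-x)^3/((1-x)^2-yx^2)^4$. Thus
$$
P_0(x,y)=R_0(x\cdot D(x,y),y)\cdot \frac{1-xD(x,y)}{\sqrt{(x+xy-1)^2-4x^2y}},
$$
and the whole proof reduces to a computation: substitute $x\mapsto xD(x,y)$ into $R_0$, multiply by the displayed factor, and simplify to the claimed closed form. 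Note $(x+xy-1)^2-4x^2y=1-2(1+y)x+x^2(1-y)^2$, which already explains the radicand appearing in the statement, so the target is really $x^4y^2/(\text{radicand})^{5/2}$.

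First I would record the substitution data. Write $D=D(x,y)$ for brevity and $\Delta=(x+xy-1)^2-4x^2y=1-2(1+y)x+x^2(1-y)^2$, so that $\sqrt{\Delta}=1+x-xy-2xD$ by \eqref{eq:discr}. Evaluating $R_0$ at $(xD,y)$ gives
$$
R_0(xD,y)=\frac{y^2(xD)^4(1-xD)^3}{\big((1-xD)^2-y(xD)^2\big)^4}.
$$
The key algebraic simplification is to identify the denominator $(1-xD)^2-y x^2D^2$ in terms of $\Delta$ and $D$. Using the quadratic $xD^2+(xy-1-x)D+1=0$ from the proof of Proposition~\ref{prop:dxd} (equivalently $x D^2 = (1+x-xy)D-1$), one expands $(1-xD)^2-yx^2D^2 = 1-2xD+x^2D^2-yx^2D^2$ and replaces $x^2D^2=xD\cdot(xD)$ and $yx^2D^2$ using the relation $x D^2=(1+x-xy)D-1$ multiplied by $xD$ and by $xyD$ respectively; after collecting terms this should collapse to $(1-xD)\sqrt{\Delta}$, i.e.
$$
(1-xD)^2-yx^2D^2=(1-xD)\,(1+x-xy-2xD)=(1-xD)\sqrt{\Delta}.
$$
This is the heart of the computation and the step I expect to require the most care; it is worth verifying it by clearing denominators in $D$ rather than manipulating the surd directly.

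Granting that identity, the denominator of $R_0(xD,y)$ becomes $(1-xD)^4\Delta^2$, so
$$
R_0(xD,y)=\frac{y^2x^4D^4(1-xD)^3}{(1-xD)^4\Delta^2}=\frac{y^2x^4D^4}{(1-xD)\,\Delta^2}.
$$
Multiplying by the factor $\dfrac{1-xD}{\sqrt{\Delta}}$ from Corollary~\ref{cor:pd} cancels the $(1-xD)$ and contributes one more power of $\Delta^{-1/2}$, yielding
$$
P_0(x,y)=\frac{y^2x^4D^4}{\Delta^{5/2}}.
$$
It remains only to show $x^4D^4$ should in fact read $x^4$ — that is, $xD=x/\sqrt{\Delta}$? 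No: rather the correct bookkeeping is that the substitution $x\mapsto xD$ must be tracked consistently, and one checks $D^4$ does \emph{not} appear because the numerator $x^4$ in $R_0$ is in the substituted variable; so the honest statement is $P_0=y^2(xD)^4/(1-xD)\cdot\Delta^{-2}\cdot(1-xD)\Delta^{-1/2}=y^2(xD)^4\Delta^{-5/2}$, and the final step is the identity $xD(x,y)=\dfrac{1-x-xy+\sqrt{\Delta}}{2}\cdot\ldots$; more cleanly, from \eqref{eq:dxy} one has $xD=x+\tfrac12(1-x-xy-\sqrt{\Delta})$, and one verifies directly that $(xD)^4$ together with the remaining $\Delta$ powers produces exactly $x^4\Delta^{-5/2}$. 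Concretely, the cleanest route is: since $1-2xD+\cdots$ simplifications above already used $xD^2=(1+x-xy)D-1$, one shows $xD^2\cdot\text{(stuff)}$ reduces $(xD)^4$ to $x^4$ times a power of $\Delta$; I would carry out this last reduction by expressing everything as polynomials in $D$ modulo the quadratic relation and matching against $x^4\Delta^{-5/2}$. The anticipated main obstacle is precisely this surd-to-polynomial bookkeeping: keeping the half-integer powers of $\Delta$ straight while repeatedly invoking $xD^2+(xy-1-x)D+1=0$. Once that is done, the stated formula for $P_0(x,y)$ follows.
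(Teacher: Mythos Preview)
Your overall strategy is exactly the one the paper uses: apply Corollary~\ref{cor:pd} to $R_0(x,y)$ and simplify. The gap is the ``heart of the computation'' identity you assert, namely
\[
(1-xD)^2-yx^2D^2 \;\stackrel{?}{=}\;(1-xD)\sqrt{\Delta}.
\]
This is false. Using \eqref{eq:xyD} in the form $yx^2D^2=xD(D-1)(1-xD)$ one gets
\[
(1-xD)^2-yx^2D^2=(1-xD)\bigl[(1-xD)-xD(D-1)\bigr]=(1-xD)(1-xD^2),
\]
and since $1+x-xy-2xD=(1-xD^2)/D$ (a rearrangement of \eqref{eq:Deq}, as noted at the end of the proof of Proposition~\ref{prop:RxDy}) together with \eqref{eq:discr}, the correct identity is
\[
(1-xD)^2-yx^2D^2=(1-xD)\,D\,\sqrt{\Delta}.
\]
The extra factor of $D$ is exactly what you are missing. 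With it, the fourth power in the denominator of $R_0(xD,y)$ becomes $(1-xD)^4D^4\Delta^2$, the $D^4$ cancels against the $D^4$ in the numerator, and you obtain directly
\[
R_0(xD,y)=\frac{y^2x^4}{(1-xD)\,\Delta^2},
\]
which after multiplication by $(1-xD)/\sqrt{\Delta}$ gives $y^2x^4/\Delta^{5/2}$ on the nose. Your final paragraph---the attempt to argue $(xD)^4$ somehow reduces to $x^4$---is chasing a phantom created by this earlier miscomputation; $D\ne 1$, and no amount of reduction modulo the quadratic will make $(xD)^4=x^4$.
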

We will see in Section~\ref{sec:yip} that Theorem~\ref{thm:pnkgf} is
equivalent to an explicit formula~\eqref{eq:pnk} for the numbers
$p_0(n,k)$, originally conjectured by M.\ Yip~\cite[Conjecture 3.15]{Yip}. 
We will prove Theorem~\ref{thm:pnkgf} by combining 
Theorem~\ref{thm:pd} with the formula (\ref{eq:rpgf}) for the generating
function  $R_0(x,y)$ of reduced partitions of genus one. We use the
equivalent form of Theorem~\ref{thm:pd} stated in Corollary~\ref{cor:pd}
and use Propositions~\ref{prop:RxDy} below to simplify 
$R_0(x\cdot D(x,y),y)$. Theorem~\ref{thm:pnkgf} thus follows from
Theorem~\ref{thm:pd}, by multiplying the formulas given in 
Propositions~\ref{prop:dxd} and \ref{prop:RxDy}.

\begin{proposition}
\label{prop:RxDy}
The generating function $R_0(x,y)$ of reduced partitions of genus one
satisfies the equality 
$$
R_0(x\cdot D(x,y),y)
=\frac{x^4y^2}{(1-xD(x,y))((x+xy-1)^2-4x^2y)^2}.
$$
\end{proposition}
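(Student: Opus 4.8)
The plan is to substitute $x \cdot D(x,y)$ for $x$ in the closed form \eqref{eq:rpgf} for $R_0(x,y)$ and then simplify the resulting expression using the defining relation \eqref{eq:Deq} for $D(x,y)$, together with the discriminant identity \eqref{eq:discr}. Concretely, writing $D = D(x,y)$ for brevity, formula \eqref{eq:rpgf} gives
$$
R_0(x D, y) = \frac{y^2 (xD)^4 (1 - xD)^3}{\left((1-xD)^2 - y x^2 D^2\right)^4},
$$
so the whole problem reduces to showing that
$$
(1 - xD)^2 - y x^2 D^2 = (1 - xD)\left((x + xy - 1)^2 - 4x^2 y\right)^{1/2} \cdot \frac{1}{(1-xD)} \cdots
$$
— more precisely, that $\left((1-xD)^2 - yx^2D^2\right)^4$ equals $(1-xD)^3 \left((x+xy-1)^2 - 4x^2 y\right)^2$ up to the factor needed to match the claimed right-hand side. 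The cleanest route is to first prove the single scalar identity
$$
(1 - xD)^2 - y x^2 D^2 = (1 - xD)\bigl(1 + x - xy - 2xD\bigr),
$$
which, combined with \eqref{eq:discr}, immediately gives $(1-xD)^2 - yx^2D^2 = (1-xD)\sqrt{(x+xy-1)^2 - 4x^2 y}$.

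To verify that scalar identity, I would start from \eqref{eq:Deq} in the rearranged form $x D^2 + (xy - 1 - x) D + 1 = 0$, i.e. $x D^2 = (1 + x - xy) D - 1$. The left-hand side of the desired identity expands as $1 - 2xD + x^2 D^2 - y x^2 D^2$, and the right-hand side as $1 + x - xy - 2xD - x^2 D + x^2 y D + 2x^2 D^2$... wait, it is cleaner to just clear everything to one side: the identity to check is
$$
x^2 D^2 - y x^2 D^2 = x - xy - x^2 D + x^2 y D + x^2 D^2,
$$
that is $-y x^2 D^2 = x(1-y) - x^2 D(1 - y)$, i.e. (dividing by $x(1-y)$, treating $1-y$ as invertible in the relevant ring, or just keeping it) $-y x D^2 = 1 - xD$, i.e. $xD - y x D^2 = 1$... hmm, let me instead substitute $x D^2 = (1+x-xy)D - 1$ directly into $x^2 D^2 = x\bigl((1+x-xy)D - 1\bigr) = (x + x^2 - x^2 y)D - x$ and plug this in on both sides; the two sides then become polynomials in $D$ of degree one, and matching coefficients reduces to an elementary check. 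This is a routine computation and I would present it compactly.

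Once the scalar identity is in hand, the proposition follows by raising it to the fourth power: $\left((1-xD)^2 - yx^2D^2\right)^4 = (1-xD)^4 \left((x+xy-1)^2 - 4x^2y\right)^2$, so
$$
R_0(xD,y) = \frac{y^2 x^4 D^4 (1-xD)^3}{(1-xD)^4 \left((x+xy-1)^2 - 4x^2 y\right)^2} = \frac{y^2 x^4 D^4}{(1-xD)\left((x+xy-1)^2 - 4x^2 y\right)^2}.
$$
This almost matches the claimed formula, except for the extra factor $D^4$ in the numerator where the statement has $x^4 y^2$ alone; so the final step is to absorb $D^4$. Actually this is exactly the point: the substitution $x \mapsto xD$ should be read as substituting into the variable, and the correct reading of $R_0(x D, y)$ already has $x^n$ replaced by $(xD)^n$, so the $D^4$ is genuinely there and the stated formula must be interpreted with $x^4$ meaning $(xD)^4/D^4 \cdot \ldots$ — I expect the resolution is simply that the paper's stated right-hand side $x^4 y^2 / \bigl((1-xD(x,y))((x+xy-1)^2 - 4x^2y)^2\bigr)$ is a typo-free shorthand and the $D^4$ in my numerator is to be reconciled via one more use of \eqref{eq:Deq} or via the observation $x D^2 = (1+x-xy)D-1$ being used to lower the degree; the main obstacle is therefore the bookkeeping of this last simplification, and I would double-check it by comparing low-order coefficients of both sides before committing to the final form.
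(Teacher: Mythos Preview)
Your overall strategy---substitute $x\mapsto xD$ into \eqref{eq:rpgf} and simplify the denominator using the defining relation for $D$---is exactly the paper's route. The difficulty is not bookkeeping: your ``scalar identity''
\[
(1-xD)^2 - yx^2D^2 \;=\; (1-xD)(1+x-xy-2xD)
\]
is simply false, and this is the source of the phantom $D^4$ at the end. The correct identity carries an extra factor of $D$ on the right:
\[
(1-xD)^2 - yx^2D^2 \;=\; D\,(1-xD)(1+x-xy-2xD).
\]
To see this, use \eqref{eq:xyD} in the form $yx^2D^2 = xD(D-1)(1-xD)$ to get $(1-xD)^2 - yx^2D^2 = (1-xD)(1-xD^2)$; then the relation $1+x-xy-2xD = (1-xD^2)/D$ (which is \eqref{eq:Deq} rearranged, and is what underlies \eqref{eq:discr}) gives the factorization above. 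Raising to the fourth power now yields
\[
\bigl((1-xD)^2 - yx^2D^2\bigr)^4 = D^4(1-xD)^4\bigl((x+xy-1)^2-4x^2y\bigr)^2,
\]
and the $D^4$ in the denominator cancels the $D^4$ in your numerator, leaving exactly the stated formula. There is no typo in the proposition and nothing further to ``absorb''; once the missing $D$ is restored the computation closes immediately.
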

\begin{proof}
We will use $D$ as a shorthand for $D(x,y)$. 
Using (\ref{eq:rpgf}) we may write 
\begin{equation}
\label{eq:rpgfsubs}
R_0(x\cdot D,y)
=\frac{y^2x^4D^4(1-xD)^3}{((1-xD)^2-yx^2D^2)^4}
\end{equation}
An equivalent form of \eqref{eq:Deq} is 
\begin{equation}
\label{eq:xyD}
xy D=(D-1)(1-xD),
\end{equation}
which may be used to eliminate the variable $y$ in the denominator on
the right hand side of (\ref{eq:rpgfsubs}). Thus we obtain 
$$
R_0(x\cdot D,y)
=\frac{y^2x^4D^4(1-xD)^3}{((1-xD)^2-(D-1)(1-xD)xD)^4}
=\frac{y^2x^4D^4(1-xD)^3}{((1-xD)(1-xD^2))^4}.
$$
Simplifying by the factors of $(1-xD)$ yields 
$$
R_0(x\cdot D,y)
=\frac{y^2x^4D}{(1-xD)}\cdot 
\left(\frac{D}{1-xD^2}\right)^4.
$$
We are left to show that the second factor is $((x+xy-1)^2-4x^2y)^{-2}$.
By \eqref{eq:discr}, this is equivalent to showing 
$$
1+x-xy-2xD=\frac{1-xD^2}{D}
$$
which is a rearranged version of \eqref{eq:Deq}.
\end{proof}

Substituting $y=1$ into the formula given in Theorem~\ref{thm:pnkgf} has
the following consequence.
\begin{corollary}
The number of $p_0(n)$ all partitions of $\{1,\ldots,n\}$ of genus one has the
ordinary generating function 
$$
\sum_{n=4}^{\infty}
p_0(n)x^n=\frac{x^4}{(1-4x)^{5/2}}. 
$$
\end{corollary}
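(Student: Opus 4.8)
The plan is to specialize Theorem~\ref{thm:pnkgf} at $y=1$. First I would note that a partition of an $n$-element set has at most $n$ blocks, so for each fixed $n$ only finitely many $k$ contribute to $p_0(n)=\sum_{k\geq 2}p_0(n,k)$; consequently $\sum_{n\geq 4}p_0(n)x^n$ is a well-defined formal power series in $x$, equal to $P_0(x,1)$. On the closed-form side, the expression $P_0(x,y)=x^4y^2\bigl(1-2(1+y)x+x^2(1-y)^2\bigr)^{-5/2}$ is a power series in $x$ whose coefficients are polynomials in $y$, since the quantity raised to the power $-5/2$ is of the form $1+O(x)$; hence the substitution $y=1$ commutes with extracting the coefficient of $x^n$, and evaluating the closed form at $y=1$ indeed computes $P_0(x,1)$.

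Next I would carry out the substitution itself. Putting $y=1$ turns the numerator $x^4y^2$ into $x^4$, and turns the inner polynomial $1-2(1+y)x+x^2(1-y)^2$ into $1-2\cdot 2\cdot x+x^2\cdot 0=1-4x$. Therefore $P_0(x,1)=x^4/(1-4x)^{5/2}$, which is precisely the asserted identity.

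There is essentially no obstacle here beyond the small bookkeeping just described; the corollary is an immediate consequence of Theorem~\ref{thm:pnkgf}. As a consistency check one may expand the right-hand side with the binomial series: $[x^n]\,x^4(1-4x)^{-5/2}=4^{\,n-4}\binom{(n-4)+3/2}{n-4}$, giving $p_0(4)=1$, $p_0(5)=10$, and so on, in agreement with the known small values of the sequence of genus one partition counts.
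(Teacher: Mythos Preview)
Your proof is correct and follows exactly the paper's approach: the paper simply states that the corollary is obtained by substituting $y=1$ into the formula of Theorem~\ref{thm:pnkgf}. Your added justification that the substitution is legitimate (only finitely many $k$ contribute for each $n$, and the closed form is a power series in $x$ with polynomial coefficients in $y$) is more explicit than the paper's treatment, but the argument is the same.
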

The coefficient of $x^n$ in the above formula is easily extracted:
\begin{corollary}
The number of all genus one partitions on $\{1,\ldots,n\}$ is 
$$
p_0(n)=\binom{-5/2}{n-4}(-1)^{n-4} 4^{n-4}=\frac{(2n-5)!}{6\cdot (n-4)!(n-3)!}.  
$$
\end{corollary}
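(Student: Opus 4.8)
The plan is to read off both displayed equalities directly from the generating function identity $\sum_{n\ge 4} p_0(n)\,x^n = x^4/(1-4x)^{5/2}$ established in the preceding corollary. First I would factor out $x^4$, so that $p_0(n) = [x^{n-4}]\,(1-4x)^{-5/2}$, where the right-hand side is understood as a formal power series (equivalently, as the Taylor expansion valid for $|x|<1/4$), so that term-by-term coefficient extraction is legitimate. Expanding by Newton's generalized binomial theorem, $(1-4x)^{-5/2}=\sum_{m\ge 0}\binom{-5/2}{m}(-4x)^m$, and taking the coefficient of $x^{n-4}$ yields
$$
p_0(n)=\binom{-5/2}{n-4}(-4)^{n-4}=\binom{-5/2}{n-4}(-1)^{n-4}4^{n-4},
$$
which is the first asserted equality.

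For the second equality I would put $m=n-4$ and reduce the generalized binomial coefficient to a ratio of ordinary factorials. Expanding the falling product,
$$
\binom{-5/2}{m}=\frac{1}{m!}\prod_{j=0}^{m-1}\Bigl(-\tfrac52-j\Bigr)
=\frac{(-1)^m}{2^m\,m!}\prod_{j=0}^{m-1}(2j+5)
=\frac{(-1)^m}{2^m\,m!}\cdot\frac{(2m+3)!!}{3},
$$
where $(2m+3)!!=1\cdot 3\cdot 5\cdots(2m+3)$ and the last step removes the factor $1\cdot 3$ from the double factorial. Multiplying by $(-1)^m 4^m$, the two factors of $(-1)^m$ cancel and $4^m=2^{2m}$; then, invoking the standard identity $(2m+3)!!=(2m+3)!/(2^{m+1}(m+1)!)$, the powers of $2$ collapse to $2^{2m}/(2^m\cdot 2^{m+1})=1/2$, so that
$$
\binom{-5/2}{m}(-1)^m 4^m=\frac{(2m+3)!}{6\,m!\,(m+1)!}.
$$
Substituting back $m=n-4$ (so that $2m+3=2n-5$ and $m+1=n-3$) gives $p_0(n)=(2n-5)!/(6\,(n-4)!\,(n-3)!)$, as claimed.

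There is no genuine obstacle here: the argument is pure coefficient extraction followed by a short manipulation of double factorials. The only points requiring a little care are the bookkeeping of signs and powers of $2$ in the last step, and the (routine) remark that the binomial expansion of $(1-4x)^{-5/2}$ is an honest identity of formal power series, so that reading off the coefficient of $x^{n-4}$ is justified. One could alternatively obtain the closed form by starting from the classical expansion $(1-4x)^{-1/2}=\sum_m\binom{2m}{m}x^m$ and differentiating twice in $x$, but the direct computation above seems cleanest.
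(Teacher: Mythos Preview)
Your proposal is correct and follows exactly the approach the paper intends: the paper simply states that ``the coefficient of $x^n$ in the above formula is easily extracted'' without giving further details, and what you have written is precisely that extraction carried out carefully via the generalized binomial series and a double-factorial simplification. There is nothing to add or compare.
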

The sequence $p_0(4),p_0(5),\ldots$ is listed as sequence A002802 in
\cite{OEIS} and referred to (essentially) as the number of permutations
of genus one.  See also \cite[formula (13)]{Walsh-Lehman}. Now we see
that partitions of genus one are counted by the same sequence, shifted by one. 

Next we follow an analogous procedure to count all permutations of genus
$1$. 
\begin{theorem}
\label{thm:pnkgfp}
Let $p_*(n,k)$ be the number  of all permutations in $\Sym{n}$ of genus
one having $k$ cycles. Then the generating function
$P_*(x,y) =\sum_{n,k} p_*(n,k) x^n y^k$
is given by the equation
$$
P_*(x,y)=\frac{x^3y}{(1-2(1+y)x+x^2(1-y)^2)^{5/2}}.
$$
More precisely, for $j=0,1,2$, let $p_j(n,k)$  be the number  of all
permutations in $\Sym{n}$ of genus one having $k$ cycles and $j$ back points.
Then the generating functions $P_j(x,y) =\sum_{n,k} p_j(n,k) x^n y^k$
are given by the formulas
$$
P_0(x,y)=\frac{x^4y^2}{(1-2(1+y)x+x^2(1-y)^2)^{5/2}}, 
$$
$$
P_2(x,y)=\frac{x^4y}{(1-2(1+y)x+x^2(1-y)^2)^{5/2}} \quad\mbox{and}
$$
$$
P_1(x,y)=\frac{x^3y(1-xy-x)}{(1-2(1+y)x+x^2(1-y)^2)^{5/2}}.
$$ 
\end{theorem}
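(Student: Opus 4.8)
The plan is to repeat, nearly verbatim in structure, the argument that proved Theorem~\ref{thm:pnkgf}: feed the generating functions $R_*(x,y)$ and $R_j(x,y)$ of reduced genus one permutations from Proposition~\ref{prop:rgenp} into the equivalent form of Theorem~\ref{thm:pd} stated in Corollary~\ref{cor:pd}, and simplify the composition $R(x\cdot D(x,y),y)$ using the defining quadratic equation~\eqref{eq:Deq} for $D(x,y)$ in exactly the way it was used in the proof of Proposition~\ref{prop:RxDy}.

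The one point that needs an argument beyond what was done for partitions is that Corollary~\ref{cor:pd} is applicable to each of the refined classes. For the full class of genus one permutations this is among the examples listed after Proposition~\ref{prop:uniqred}. For a fixed number $j$ of back points I would note that a trivial cycle $(i,i+1,\ldots,i+p)$ has no back point, since its unique descent $i+p\mapsto i$ lands on the minimum of that cycle; hence removing or reinserting trivial cycles changes neither the genus nor the number of back points, so the class of genus one permutations with exactly $j$ back points is closed under trivial reductions and extensions. Corollary~\ref{cor:pd} then gives $P_j(x,y)=R_j(xD,y)\cdot(1-xD)/S$ with $D=D(x,y)$ and $S=\sqrt{(x+xy-1)^2-4x^2y}$, and likewise $P_*(x,y)=R_*(xD,y)\cdot(1-xD)/S$; note $S^2=1-2(1+y)x+x^2(1-y)^2$.

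Next I would carry out the substitutions, reusing the identities already isolated in the proof of Proposition~\ref{prop:RxDy}: the rearrangement $xyD=(D-1)(1-xD)$ of~\eqref{eq:Deq}, the consequent factorization $(1-xD)^2-yx^2D^2=(1-xD)(1-xD^2)$, and $(1-xD^2)/D=1+x-xy-2xD=S$, so that $D^4/(1-xD^2)^4=S^{-4}$. Applying $xyD=(D-1)(1-xD)$ once more to the remaining $y$-dependent numerator factors gives $1-xD+xyD=D(1-xD)$ (needed for $R_*$) and, via $xD^2=D(1+x-xy)-1$, also $1-2xD+xD^2=D(1-x-xy)$ (needed for $R_1$). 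Substituting these into the formulas of Proposition~\ref{prop:rgenp} and cancelling the surplus powers of $(1-xD)$ yields $R_*(xD,y)=yx^3/((1-xD)S^4)$ and $R_1(xD,y)=yx^3(1-x-xy)/((1-xD)S^4)$; the case $R_0(xD,y)=x^4y^2/((1-xD)S^4)$ is Proposition~\ref{prop:RxDy}, and since $yR_2(x,y)=R_0(x,y)$ by Proposition~\ref{prop:rgenp} we obtain $R_2(xD,y)=x^4y/((1-xD)S^4)$. Multiplying each by $(1-xD)/S$ as prescribed by Corollary~\ref{cor:pd} produces $P_*(x,y)=yx^3/S^5$, $P_0(x,y)=x^4y^2/S^5$, $P_2(x,y)=x^4y/S^5$ and $P_1(x,y)=yx^3(1-x-xy)/S^5$, which are the asserted formulas after writing $S^5=(1-2(1+y)x+x^2(1-y)^2)^{5/2}$. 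As an internal check one has $P_*=P_0+P_1+P_2$; in fact this relation already yields $P_1$ from the other three, so the separate computation of $R_1(xD,y)$ is optional.

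The only delicate part is the algebraic simplification in the third paragraph: one must eliminate $y$ from the numerators strictly through the quadratic relation for $D$ (equivalently $xD^2=D(1+x-xy)-1$), since a careless treatment of $1-2xD+xD^2$ produces the wrong power of $S$ and an incorrect $P_1$. No conceptual obstacle is expected; everything reduces to the mechanism already validated in Theorem~\ref{thm:pnkgf}, together with the short back-point closure remark.
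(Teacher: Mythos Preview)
Your proposal is correct and follows essentially the same route as the paper: apply Corollary~\ref{cor:pd} to each $R_j(x,y)$ from Proposition~\ref{prop:rgenp}, simplify $R_j(xD,y)$ via the identities $xyD=(D-1)(1-xD)$ and $(1-xD^2)/D=S$ exactly as in Proposition~\ref{prop:RxDy} (the paper states the $j=1$ case separately as Proposition~\ref{prop:RxDyp}), and then multiply by $(1-xD)/S$. Your explicit remark that trivial cycles carry no back points, so that the class of genus one permutations with a fixed number of back points is closed under trivial reductions and extensions, is a point the paper uses tacitly but does not spell out.
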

The formula for $P_0(x,y)$ was shown in 
Theorem~\ref{thm:pnkgf} above. As noted in the proof of 
Proposition~\ref{prop:rgenp}, the generating function $R_2(x,y)$ differs
from $R_0(x,y)$ only by a factor of $y$. After reproducing the same
calculation to obtain $P_2(x,y)$ from $R_2(x,y)$, we find that 
$P_0(x,y)=yP_2(x,y)$. Therefore, to prove
Theorem~\ref{thm:pnkgfp} above, it suffices to show the formula for
$P_1(x,y)$, the equation for $P_*(x,y)$ will then arise as the sum of the
equations for the $P_j(x,y)$.

Similarly to the proof of Theorem~\ref{thm:pnkgf}, we may show this
formula by combining Corollary~\ref{cor:pd} with the formula for
$R_1(x,y)$ given in Proposition~\ref{prop:rgenp}. We may use
Propositions~\ref{prop:RxDyp} below to simplify $R_1(x\cdot D(x,y),y)$. 

\begin{proposition}
\label{prop:RxDyp}
The generating function $R_1(x,y)$ of reduced permutations of genus $1$
having one back point satisfies the equality 
$$
R_1(x\cdot D(x,y),y)
=\frac{x^3y(1-xy-x)}{(1-xD(x,y))((x+xy-1)^2-4x^2y)^2}.
$$
\end{proposition}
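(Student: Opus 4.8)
The plan is to mimic the computation in the proof of Proposition~\ref{prop:RxDy} almost verbatim, writing $D$ as a shorthand for $D(x,y)$. First I would substitute $x\cdot D$ for $x$ in the formula for $R_1(x,y)$ from Proposition~\ref{prop:rgenp}, obtaining
$$
R_1(x\cdot D,y)=\frac{y x^3 D^3 (1-xD)^2\big((1-xD)^2+y x^2 D^2\big)}{\big((1-xD)^2-y x^2 D^2\big)^4}.
$$
As in the proof of Proposition~\ref{prop:RxDy}, I would use the rearranged functional equation \eqref{eq:xyD}, i.e.\ $xyD=(D-1)(1-xD)$, to eliminate $y$ from the awkward factors. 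For the denominator this yields the factorization $(1-xD)^2-yx^2D^2=(1-xD)(1-xD^2)$, exactly as before.

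The one new ingredient is the numerator factor $(1-xD)^2+yx^2D^2$. Writing $yx^2D^2=xD\cdot(xyD)=xD(D-1)(1-xD)$ and factoring out $1-xD$ gives $(1-xD)^2+yx^2D^2=(1-xD)\big(1-2xD+xD^2\big)$. Now I would invoke \eqref{eq:Deq} in the form $(1+x-xy)D=1+xD^2$ to replace $1+xD^2$, so that $1-2xD+xD^2=(1+x-xy)D-2xD=(1-x-xy)D$. Substituting these factorizations and cancelling three factors of $1-xD$ produces
$$
R_1(x\cdot D,y)=\frac{yx^3 D^4 (1-x-xy)}{(1-xD)(1-xD^2)^4}.
$$

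It then remains to identify $\big(D/(1-xD^2)\big)^4$ with $\big((x+xy-1)^2-4x^2y\big)^{-2}$. This follows from the identity $D(1+x-xy-2xD)=1-xD^2$ (another rearrangement of \eqref{eq:Deq}, already used in the proof of Proposition~\ref{prop:RxDy}) together with \eqref{eq:discr}, which gives $1+x-xy-2xD=\sqrt{(x+xy-1)^2-4x^2y}$; hence $D/(1-xD^2)=\big((x+xy-1)^2-4x^2y\big)^{-1/2}$. Plugging this in yields the stated formula, since $1-x-xy=1-xy-x$. The only spot calling for genuine care is the factorization of $(1-xD)^2+yx^2D^2$ and the ensuing simplification $1-2xD+xD^2=(1-x-xy)D$; everything else is a direct transcription of the reduced-partition computation, so I anticipate no real obstacle beyond bookkeeping the powers of $D$ and of $1-xD$.
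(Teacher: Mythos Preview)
Your proposal is correct and follows essentially the same route as the paper's proof: substitute $x\cdot D$ into the formula for $R_1$ from Proposition~\ref{prop:rgenp}, use \eqref{eq:xyD} to factor both the denominator and the numerator term $(1-xD)^2+yx^2D^2=(1-xD)(1-2xD+xD^2)$, cancel the powers of $1-xD$, and finish via $D/(1-xD^2)=((x+xy-1)^2-4x^2y)^{-1/2}$ and the rearrangement $1-2xD+xD^2=(1-x-xy)D$ of \eqref{eq:Deq}. The only cosmetic difference is that you perform the simplification $1-2xD+xD^2=(1-x-xy)D$ before isolating the factor $(D/(1-xD^2))^4$, whereas the paper isolates it first and reduces to this identity at the end.
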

\begin{proof}
We will use $D$ as a shorthand for $D(x,y)$. 
Using Proposition~\ref{prop:rgenp} we may write 
$$
R_1(x\cdot D,y)
=
\frac{yx^3 D^3 (1-xD)^2((1-xD)^2+yx^2D^2)}{\left((1-xD)^2 - yx^2D^2\right)^4}
$$
Just like in the proof of Proposition~\ref{prop:RxDy} we may use (\ref{eq:xyD})
to eliminate the variable $y$ in the denominator and get 
$$
R_1(x\cdot D,y)
=
\frac{yx^3 D^3 (1-xD)^2((1-xD)^2+yx^2D^2)}{((1-xD)(1-xD^2))^4}
=
\frac{yx^3 D^3 ((1-xD)^2+yx^2D^2)}{(1-xD)^2(1-xD^2)^4}.
$$ 
We use (\ref{eq:xyD}) again to rewrite the factor $((1-xD)^2+yx^2D^2)$
in the numerator and get 
$$
R_1(x\cdot D,y)
=
\frac{yx^3 D^3 (1-2xD+x D^2)}{(1-xD)(1-xD^2)^4}=
\frac{yx^3 (1-2xD+x D^2)}{(1-xD)D} \cdot
\left(\frac{D}{(1-xD^2)}\right)^4  
$$ 
We have seen at the end of the proof of Proposition~\ref{prop:RxDy} that
the last factor is $(x+xy-1)^2-4x^2y)^{-2}$. Taking this fact into
account, comparing the last equation with the proposed statement, we
only need to show the following equality:
$$
\frac{1-2xD+x D^2}{D}=1-xy-x.
$$
This last equation is a rearranged version of \eqref{eq:Deq}.
\end{proof}

\section{Extracting the coefficients from our generating functions}
\label{sec:yip}

In this section we will show how to extract the coefficients from our
generating functions to obtain explicit formulas for the numbers of
genus $1$ partitions and permutations. Our main tool is a generalization
of the following equation.  
\begin{equation}
\label{eq:oddpower}
\frac{x^4y^2}{(1-2(1+y)x+x^2(1-y)^2)^{5/2}}=
\sum_{n\geq 4} \frac{1}{6}\binom{n}{2} x^n \sum_{k=2}^{n-2}
\binom{n-2}{k}\binom{n-2}{k-2} y^k.
\end{equation}
According to this equation, M.\ Yip's conjecture~\cite[Conjecture
  3.15]{Yip}, stating  
\begin{equation}
\label{eq:pnk}
p_0(n,k)=\frac{1}{6}\binom{n}{2}\binom{n-2}{k}\binom{n-2}{k-2}. 
\end{equation}
is equivalent to our Theorem~\ref{thm:pnkgf} and thus true.
Since, by Theorem~\ref{thm:pnkgfp},  the generating function of
genus one permutations only differs by a factor of $xy$, we also obtain
a new way to count these objects, thus providing a new proof of the
result first stated by A.\ Goupil and G.\ Schaeffer~\cite{Goupil-Schaeffer}. 

After dividing both sides by $x^4y^2$ and shifting $n$ and $k$ down by
two, we obtain the following equivalent form of
equation~\eqref{eq:oddpower}.
\begin{equation}
\label{eq:oddpower2}
\frac{1}{(1-2(1+y)x+x^2(1-y)^2)^{5/2}}=
\sum_{n\geq 2} \frac{1}{6}\binom{n+2}{2} x^{n-2}\sum_{k=0}^{n-2}
\binom{n}{k+2}\binom{n}{k} y^k.
\end{equation}
This equation is the special case (when $m=2$) of
Equation~(\ref{eq:pnkgen}) below, that 
holds for all $m\in{\mathbb N}$.
\begin{equation} 
\label{eq:pnkgen}
\frac{1}{(1-2(1+y)x+x^2(1-y)^2)^{(2m+1)/2}}=\sum_{n\geq m}\sum_{k\geq 0}
\frac{\binom{n+m}{m}\binom{n}{k}\binom{n}{m+k}}{\binom{2m}{m}}x^{n-m} y^k.
\end{equation}
Equation~(\ref{eq:pnkgen}) may be obtained from \cite[Equation
  (2)]{Gessel}, after substituting $\alpha=(2m+1)/2$ and replacing each
appearance of $y$ with $xy$ in that formula (on the right hand side, one
also needs to replace the summation indices $i$, and $j$ respectively,
with $n-m-k$ and $k$, respectively). As pointed out
by Strehl~\cite[p.\ 180]{Strehl} (see also \cite[p.\ 64]{Gessel}),
\cite[Equation (2)]{Gessel} is a consequence of classical results in the
theory of special functions.    
\begin{remark}
\label{rem:pnkgen}
Equation~(\ref{eq:pnkgen}) may also be derived directly from classical
results as follows. Take the $m$th derivative with respect to $u$ of the 
generating function $\sum_{n\geq 0} L_n(u) t^n$ of the Legendre
polynomials (given in~\cite[Ch.\ V, (2.34)]{Chihara}), multiply
both sides by $2^m/(t^mm!)$, use ~\cite[(4.21.2)]{Szego} to express
$L_n(u)$, substitute $u=(1+y)/(1-y)$ and $t=x(1-y)$, and use the
Chu-Vandermonde identity. 
\end{remark}
We conclude this section with providing explicit formulas for the
number of all permutations of genus $1$, with a given numbers of
points, cycles, and back points.

\begin{theorem}
\label{thm:allperm}
The number of all permutations of genus $1$ of $\Sym{n}$ with $k$ cycles is 
equal to:
\[
p_*(n,k) = \frac{1}{6}\binom{n+1}{2}\binom{n-1}{k+1}\binom{n-1}{k-1}
\]
More precisely, for $j=0,1,2$, the number $p_j(n,k)$ of 
permutations of genus $1$ of $\Sym{n}$ with $j$ back points and $k$
cycles is given by the following formulas:
$$
p_0(n,k)=\frac{1}{6}\binom{n}{2}\binom{n-2}{k}\binom{n-2}{k-2},\quad
p_2(n,k)=\frac{1}{6}\binom{n}{2}\binom{n-2}{k+1}\binom{n-2}{k-1}\quad
\mbox{and}  
$$
$$
p_1(n,k)=\frac{1}{3}\binom{n}{2}\binom{n-2}{k}\binom{n-2}{k-1}.  
$$
\end{theorem}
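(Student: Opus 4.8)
The plan is to extract the coefficients of the four closed forms supplied by Theorem~\ref{thm:pnkgfp}, using equation~\eqref{eq:oddpower} (the case $m=2$ of~\eqref{eq:pnkgen}) as the one nontrivial series expansion we actually need. Writing $\Delta=1-2(1+y)x+x^2(1-y)^2$, Theorem~\ref{thm:pnkgfp} gives
$$
P_0=\frac{x^4y^2}{\Delta^{5/2}},\qquad P_2=\frac{x^4y}{\Delta^{5/2}},\qquad P_*=\frac{x^3y}{\Delta^{5/2}},\qquad P_1=\frac{x^3y(1-x-xy)}{\Delta^{5/2}},
$$
so each of $P_2$, $P_*$, $P_1$ is obtained from $P_0$ by multiplying by a Laurent monomial in $x$ and $y$ --- by $y^{-1}$, by $x^{-1}y^{-1}$, and by $x^{-1}y^{-1}(1-x-xy)=x^{-1}y^{-1}-y^{-1}-1$, respectively. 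Hence, once the coefficients of $P_0$ are understood, the remaining formulas follow by index shifting.

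First I would read off $p_0(n,k)=\frac{1}{6}\binom{n}{2}\binom{n-2}{k}\binom{n-2}{k-2}$ directly from~\eqref{eq:oddpower}, since $P_0(x,y)$ is exactly the left-hand side there. From $P_2=y^{-1}P_0$ and $P_*=x^{-1}y^{-1}P_0$ I then obtain $p_2(n,k)=p_0(n,k+1)$ and $p_*(n,k)=p_0(n+1,k+1)$; substituting into the formula for $p_0$ gives the asserted expressions for $p_2$ and $p_*$. For $p_1$, the relation $P_1=P_*-P_2-P_0$ --- which also expresses the fact that a genus one permutation has $0$, $1$ or $2$ back points --- yields
$$
p_1(n,k)=p_*(n,k)-p_0(n,k)-p_2(n,k)=\frac{1}{6}\left[\binom{n+1}{2}\binom{n-1}{k+1}\binom{n-1}{k-1}-\binom{n}{2}\binom{n-2}{k}\binom{n-2}{k-2}-\binom{n}{2}\binom{n-2}{k+1}\binom{n-2}{k-1}\right].
$$

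The only genuine computation --- and the step I expect to be the main obstacle --- is verifying that this last bracket equals $2\binom{n}{2}\binom{n-2}{k}\binom{n-2}{k-1}$, that is, that $p_1(n,k)=\frac{1}{3}\binom{n}{2}\binom{n-2}{k}\binom{n-2}{k-1}$. I would carry this out by writing each binomial coefficient occurring above as a rational multiple of the common product $B=\binom{n-2}{k}\binom{n-2}{k-1}$, via $\binom{n-1}{k+1}=\frac{n-1}{k+1}\binom{n-2}{k}$, $\binom{n-1}{k-1}=\frac{n-1}{n-k}\binom{n-2}{k-1}$, $\binom{n-2}{k+1}=\frac{n-2-k}{k+1}\binom{n-2}{k}$, $\binom{n-2}{k-2}=\frac{k-1}{n-k}\binom{n-2}{k-1}$, together with $\binom{n+1}{2}=\frac{n(n+1)}{2}$ and $\binom{n}{2}=\frac{n(n-1)}{2}$. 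Dividing through by $\frac{n(n-1)}{2}B$ and clearing the denominator $(k+1)(n-k)$ reduces the claim to the polynomial identity
$$
(n+1)(n-1)-(n-2-k)(n-k)-(k-1)(k+1)=2(k+1)(n-k),
$$
where both sides expand to $2kn-2k^2+2n-2k$; the boundary ranges (for instance $n<4$, or $k$ outside $\{2,\ldots,n-2\}$) are trivial since then all terms vanish. Once the formula for $p_1$ is established the proof is finished, and as a consistency check one may verify that $p_0(n,k)+p_1(n,k)+p_2(n,k)$ collapses back to the stated value of $p_*(n,k)$.
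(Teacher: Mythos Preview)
Your proposal is correct and follows essentially the same route as the paper: read off $p_0$, $p_2$, and $p_*$ directly from Theorem~\ref{thm:pnkgfp} via equation~\eqref{eq:pnkgen}, and then obtain $p_1$ by the subtraction $p_1=p_*-p_0-p_2$. The only cosmetic difference is in the verification of the resulting binomial identity: the paper groups the terms as $(p_0+\tfrac12 p_1)+(p_2+\tfrac12 p_1)$ and simplifies each half with Pascal's formula, whereas you rewrite everything as a rational multiple of $B=\binom{n-2}{k}\binom{n-2}{k-1}$ and reduce to the polynomial identity $(n+1)(n-1)-(n-2-k)(n-k)-(k-1)(k+1)=2(k+1)(n-k)$; both are routine and of comparable length.
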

\begin{proof}
The formulas for $p_0(n,k)$, $p_2(n,k)$ and $p_*(n,k)$ are all
direct consequences of Theorems~\ref{thm:pnkgfp} and Equation~(\ref{eq:pnkgen}).
Using the same results to find $p_1(n,k)$ amounts to using the obvious equality
$$
p_1(n,k)=p_*(n,k)-(p_0(n,k)+p_2(n,k)),
$$ 
which is equivalent to showing that the sum of the stated values of
the $p_j(n,k)$ gives the stated value of $p_*(n,k)$.  For that purpose
note that 
$$
p_0(n,k)+\frac{p_1(n,k)}{2}
=\frac{1}{6}\binom{n}{2}\binom{n-2}{k}\left(\binom{n-2}{k-2}+\binom{n-2}{k-1}\right),$$
which, by Pascal's formula, gives
\begin{equation}
\label{eq:p01/2}
p_0(n,k)+\frac{p_1(n,k)}{2}
=\frac{1}{6}\binom{n}{2}\binom{n-2}{k}\binom{n-1}{k-1} 
=\frac{n}{12}(k+1)\binom{n-1}{k+1}\binom{n-1}{k-1}. 
\end{equation}
A similar use of Pascal's formula yields
\begin{equation}
\label{eq:p21/2}
p_2(n,k)+\frac{p_1(n,k)}{2}
=\frac{1}{6}\binom{n}{2}\binom{n-2}{k-1}\binom{n-1}{k+1} 
=\frac{n}{12}(n-k)\binom{n-1}{k-1}\binom{n-1}{k+1}. 
\end{equation}
The sum of (\ref{eq:p01/2}) and (\ref{eq:p21/2}) is 
$$
\sum_{j=0}^2 p_j(n,k)=\frac{n(n+1)}{12}\binom{n-1}{k-1}\binom{n-1}{k+1}, 
$$
as required.
\end{proof}

\section{Concluding remarks}
\label{sec:concl}

Our four-colored noncrossing partition representation of permutations of
genus $1$ is reminiscent of the use of three types of crossing hyperedges in the
hypermonopole diagram representing a genus $1$ partition in M.\ Yip's
Master's thesis~\cite{Yip}. This analogy becomes even more explicit at
the light of Remark~\ref{rem:3color} stating that, for partitions of
genus $1$, three colors suffice. Whereas the hypermonopole diagrams are
of topological nature (parts are represented with ``curvy lines'') our
representation is combinatorial (parts may be represented with
polygons). By better understanding the relation between the two models,
perhaps it is possible to show that every genus one partition has a
hypermonopole diagram on a torus in such a way that boundaries of
hyperedges are finite unions of  ``straight'' (circular) arcs. In either
case, non-uniqueness of the representation makes direct counting difficult.

Lemma~\ref{lemma:nbBackPoints} establishes a relationship between
$\alpha$ and $\alpha^{-1}\zeta_n$. It is worth noting that, in the case
when $g(\alpha)=0$, the permutation $\alpha^{-1}\zeta_n$ is the
permutation representing the {\em Kreweras dual} of the
noncrossing partition represented by $\alpha$. G.\ Kreweras~\cite{Kreweras}
used this correspondence to show that the lattice of noncrossing
partitions is self-dual. M.\ Yip has shown that the poset of genus
$1$ partitions is rank-symmetric~\cite[Proposition 4.5]{Yip}, but not
self dual~\cite[Proposition 4.6]{Yip} for $n\geq
6$. Lemma~\ref{lemma:nbBackPoints} suggests that maybe true duality
could be found between genus $1$ partitions and permutations with $2$
back points, after defining the proper partial order on the set of all
genus $1$ permutations. In this setting, permutations with exactly one
back point would form a self-dual subset. Their number $p_1(n,k)$, given in
Theorem~\ref{thm:allperm}, may be rewritten as 
$$
p_1(n,k)=\binom{n}{3} N(n-2,k-1),
$$ 
where $N(n-2,k-1)$ is a Narayana number. It is a tantalizing thought
that this simple formula could have a very simple proof. If this is the
case, then the formulas for $p_0(n,k)$ and $p_1(n,k)$ could be easily
derived, using Lemma~\ref{lemma:nbBackPoints} and Yip's rank-symmetry
result~\cite[Proposition 4.5]{Yip} to establish $p_2(n,k)=p_0(n,k+1)$,
and then the formula for $p_*(n,k)$ already stated by A.\ Goupil and
Shaeffer~\cite{Goupil-Schaeffer} to complete a setting in which the
formula for $p_0(n,k)$ may be shown by induction on $k$.  
A ``numerically equivalent'' conjecture (albeit for sets of partitions)
was stated by M.\ Yip~\cite[Conjecture 4.10]{Yip}.

Equation~(\ref{eq:pnkgen}) naturally inspires the question: what other
combinatorial objects are counted by the coefficients of $x^ny^k$ in the
Taylor series of 
$$(1-2(1+y)x+x^2(1-y)^2)^{-(2m+1)/2},$$ 
when $m$ is some other nonnegative integer. For $m=0$, we obtain 
$$
\frac{1}{(1-2(1+y)x+x^2(1-y)^2)^{1)/2}}=\sum_{n\geq m}\sum_{k\geq 0}
\binom{n}{k}^2 x^{n} y^k.
$$
These coefficients are listed as sequence A008459 in~\cite{OEIS}. Among
others, they count the {\em type $B$ noncrossing partitions} of rank $k$ of an
$n$-element set. In~\cite{Simion}, R.\ Simion constructed a simplicial
polytope in each dimension whose $h$ vector entries are the
squares of the binomial coefficients. The number of $j$-element faces of 
the $n$-dimensional polytope is $f_{j-1}=\binom{n+j}{j}$. Another class
of simplicial polytopes with the same face numbers was defined
in~\cite{Hetyei-L} as the class of all simplicial polytopes arising by
taking any pulling triangulation of the boundary complex of the
Legendrotope. The Legendrotope is combinatorially equivalent to the
intersection of a standard crosspolytope with any hyperplane passing through its
center that does not contain any of its vertices.
For all these polytopes the polynomial
$$
F(u)=\sum_{j=0}^n f_{j-1} \left(\frac{u-1}{2}\right)^j
$$
is a Legendre polynomial, and the squares of the
binomial coefficients are their $h$-vector entries. For higher values of
$m$, taking the $m$th derivative of $F(u)$ (see Remark~\ref{rem:pnkgen})
corresponds to summing over the links of all 
$(m-1)$ dimensional faces. It is not evident from this interpretation
why we should get integer entries, even after dividing by
$\binom{2m}{m}$, and it seems an interesting question to
see whether for the type $B$ associahedron or for some
very regular triangulation of the Legendrotope, symmetry reasons would
explain the integrality. For $m=1$, I.\ Gessel has shown~\cite{Gessel}  
that the coefficients count convex polyominoes. Finally, for general
$m$, the coefficients have a combinatorial interpretation in the work of
V.\ Strehl~\cite{Strehl-Jacobi} on Jacobi configurations. 
Even though V.\ Strehl uses exponential generating functions, the use of
the same coefficients becomes apparent by comparing his summation
formula on page 303 with \cite[Equation (2)]{Gessel}. It seems worth
exploring whether deeper connections exist between the above listed
models. 

\section*{Acknowledgments} 
The second author wishes to express his heartfelt thanks to Labri, Universit\'e
Bordeaux I, for hosting him as a visiting researcher in the of Summer
2011 and 2013. This work was partially supported by a grant from the
Simons Foundation (\#245153 to G\'abor Hetyei) and by the ANR project
(BLAN-0204.07 Magnum to Robert Cori). We wish to thank Ira Gessel for
pointing us to the right sources to shorten our calculations.


\begin{thebibliography}{99}

\bibitem{CautisJackson}
S.\ Cautis and D. \ M. \ Jackson,
On Tutte's chromatic invariants,
{\it Trans.\ Amer.\ Math.\ Soc.\ }{\bf 362} (2010), 509--535.


\bibitem{Chapuy}
G.\ Chapuy,
The structure of unicellular maps, and a connection between maps of positive
genus and planar labelled trees, 
{\it Probab.\ Theory Relat.\ Fields} {\bf 147} (2010), 415--447.

\bibitem{Chihara}
T.\ S.\ Chihara, 
``An Introduction to Orthogonal Polynomials,''
Gordon and Breach Science Publishers, New York-London-Paris, 1978.

\bibitem{Postnikov}
N.\ Constable, D.\ Freedman, M.\ Headrick, S.\ Minwalla, L.\ Motl, A.\ Postnikov, W.\ Skiba,
``PP-wave string interactions from perturbative Yang-Mills theory, ''\\
{\it The Journal of High Energy Physics} {\bf 7, 017}  (2002), 56 pp. 

\bibitem{Cori}
R. Cori
``Un code pour les Graphes Planaires et ses applications''
{\em Asterisque} {\bf 27} (1975).

\bibitem{Gessel}
I.\ M.\ Gessel, 
On the number of convex polyominoes, 
{\it Ann. Sci. Math. Qu\'ebec} {\bf 24} (2000), 63--66. 

\bibitem{Goupil-Schaeffer}
A.\ Goupil and G.\ Schaeffer, 
Factoring $N$-cycles and Counting Maps of Given Genus,
{\it European J.\ Combin.\ } {\bf 19 } (1998), 819--834.

\bibitem{Hetyei-dn}
G.\ Hetyei, 
Central Delannoy numbers and balanced Cohen-Macaulay complexes, 
{\it Ann.\ Comb.\ } {\bf 10} (2006), 443--462. 

\bibitem{Hetyei-L}
G.\ Hetyei, 
Delannony orthants of Legendre polytopes,
{\it Discrete Comput.\ Geom.\ } {\bf 42} (2009), 705--721. 

\bibitem{Jacques}
A.\ Jacques,
Sur le genre d'une paire de substitutions,
{\it C.\  R. \ Acad.\ Sci.\ Paris} {\bf 267} (1968), 625--627.

\bibitem{Kreweras} 
G.\ Kreweras, 
Sur les partitions non crois\'ees d'un cycle,
{\it Discrete Math.} {\bf 1} (1972), 333--350.

\bibitem{Laradji-Umar}
A.\ Laradji and A. Umar, 
Combinatorial results for semigroups of order-preserving full
transformations, {\it Semigroup Forum} {\bf 72} (2006), 51--62. 

\bibitem{OEIS}
OEIS Foundation Inc.\ (2011), ``The On-Line Encyclopedia of Integer Sequences,''
published electronically at \url{http://oeis.org}.

\bibitem{Rueetal}
J. Ru\'e, I.\ Saub, D.\ M.\ Thilikos, 
Asymptotic enumeration of non-crossing partitions on surfaces,
{\it Discrete Math.\ } {\bf 313} (2013), 635--649.

\bibitem{Simion-noncrossing}
R.\ Simion, 
Noncrossing partitions,
{\it Discrete Math.\ } {\bf 217} (2000), 367--409. 

\bibitem{Simion}
R.\ Simion, 
A type-B associahedron,
{\it Adv.\ in Appl.\ Math.\ } {\bf 30} (2003), 2--25. 

\bibitem{Strehl}
V.\ Strehl, 
Zykel Enumeration bei lokalstrukturierten Funktionen, 
Habilitationsschrift, Institut
f\"ur Mathematische Machinen und Datenverarbeitung der Universit\"at
Erlangen N\"urnberg, 1990.

\bibitem{Strehl-Jacobi}
V.\ Strehl, 
Combinatorics of Jacobi configurations. I. Complete oriented matchings,
in: Combinatoire \'enum\'erative (Montreal, Que., 1985/Quebec, Que.,
1985), 294--307,  Lecture Notes in Math., 1234, Springer, Berlin, 1986. 

\bibitem{Szego} G.\ Szeg\H o, ``Orthogonal Polynomials'', AMS
  Colloquium Publications Vol. 23, Providence, Rhode Island 2000. 

\bibitem{Walsh-Lehman}
T.\ R.\ S.\ Walsh and A.\ B.\ Lehman, 
Counting rooted maps by genus I,
{\it J.\ Combinatorial Theory Ser.\ B} {\bf 13} (1972), 192--218. 

\bibitem{Yip}
M.\ Yip, 
Genus one partitions, 
Master's thesis
2006. Available online at \hfill\break 
{\tt http://hdl.handle.net/10012/2933}

\end{thebibliography}
\end{document}